\documentclass[11pt,oneside]{amsart}
\usepackage[margin=1.05in]{geometry}
\usepackage[T1]{fontenc}
\usepackage{lmodern,amsmath,amssymb,amsthm,mathtools,mathrsfs,booktabs,array}
\usepackage{microtype}
\usepackage{xcolor}
\usepackage{comment}
\usepackage[colorlinks=true,linkcolor=blue!55!black,citecolor=blue!55!black,urlcolor=blue!55!black]{hyperref}
\usepackage[nameinlink,noabbrev]{cleveref}
\hypersetup{
 pdftitle={Sharp Constants and Extremizers in Homogeneous Tree BMO},
 pdfauthor={Mohammadreza Mohajer},
 pdfkeywords={tree BMO, absolute value, finite-tree reduction, stable centering, Jensen defects}
}
\newtheorem{thm}{Theorem}[section]
\newtheorem{prop}{Proposition}[section]
\newtheorem{lemma}{Lemma}[section]
\newtheorem{cor}{Corollary}[section]
\theoremstyle{definition}
\newtheorem{defn}{Definition}[section]

\theoremstyle{remark}
\newtheorem{remark}{Remark}[section]
\crefname{thm}{Theorem}{Theorems}
\Crefname{thm}{Theorem}{Theorems}
\crefname{prop}{Proposition}{Propositions}
\Crefname{prop}{Proposition}{Propositions}
\crefname{lemma}{Lemma}{Lemmas}
\Crefname{lemma}{Lemma}{Lemmas}
\crefname{cor}{Corollary}{Corollaries}
\Crefname{cor}{Corollary}{Corollaries}
\crefname{defn}{Definition}{Definitions}
\Crefname{defn}{Definition}{Definitions}
\newcommand{\R}{\mathbb R}

\newcommand{\Z}{\mathbb Z}

\newcommand{\E}{\mathbb E}
\newcommand{\Prob}{\mathbb P}
\newcommand{\cT}{\mathcal T}

\newcommand{\cA}{\mathcal A}
\newcommand{\scN}{\mathscr N}
\newcommand{\scC}{\mathscr C}
\newcommand{\scA}{\mathscr A}
\newcommand{\one}{\mathbf 1}
\DeclareMathOperator{\BMO}{BMO}
\DeclareMathOperator{\osc}{osc}
\DeclareMathOperator{\Carl}{Carl}
\DeclareMathOperator{\Var}{Var}
\DeclareMathOperator{\sgn}{sgn}

\newcommand{\avg}[1]{\langle #1\rangle}
\newcommand{\ie}{{\it i.e., }}
\numberwithin{equation}{section}

\title{Sharp Constants and Extremizers in Homogeneous Tree BMO}

\author{Mohammadreza Mohajer}
\address{Department of Mathematics, Physics and Geology, Cape Breton University, 1250 Grand Lake Road, Sydney, NS B1P 6L2 Canada}
\email{mohammadreza.mohajer96@gmail.com}

\thanks{}

\keywords{Tree BMO, homogeneous trees, finite-tree reduction, harmonic analysis on local fields,\newline absolute value, rational extremizers, Jensen defects}
\subjclass[2020]{Primary 42B35; Secondary 26D15, 60G46, 11S80}
\begin{document}
\begin{abstract}
We develop a finite-tree formalism for sharp inequalities on real-valued,
mean-centered $\operatorname{L}^1$-BMO over homogeneous $q$-ary
filtrations. A reduction theorem identifies local and global sharp
constants with the supremum of finite-depth constants. Finite anchoring
gives compact optimization problems and rational extremizers under
rational piecewise-linear hypotheses, while stable centering shows
that imposing zero root average preserves the supremum over all depths.
The principal application is the absolute-value map: we construct
centered examples in which taking absolute values increases the BMO
seminorm and obtain bounds strictly below the universal factor $2$.
Jensen-defect Carleson functionals and martingale-difference envelopes
provide further applications, illustrating distinct anchoring
mechanisms and the differences between finite-depth rationality,
stabilization, and global attainment.
\end{abstract}
\maketitle
\enlargethispage{6pt}
\setcounter{tocdepth}{1}
\section{Introduction}\label{sec:intro}

Sharp inequalities in BMO depend on both the family of averaging
sets and the choice of oscillation seminorm. The absolute-value
map provides a basic example. For the mean-centered
$\operatorname{L}^1$-BMO seminorm introduced by John and Nirenberg
\cite{JohnNirenberg1961}, the elementary estimate
\[
 \|\,|f|\,\|_\ast\le2\|f\|_\ast
\]
holds for any family of averaging sets. We call the absolute-value map contractive if
$\|\,|f|\,\|_\ast\le\|f\|_\ast$ for every $f$.
We say that amplification occurs when
$\|\,|f|\,\|_\ast>\|f\|_\ast$ for some $f$. Korenovskii proved that
the optimal constant is $1$ when all subintervals of an interval
are used \cite{Korenovskii1992}. The corresponding problem for
more general averaging families is discussed by Dafni and Gibara
\cite[Section~5]{Dafni2019BMOonShapes}. On a fixed dyadic
filtration, the arrangement of values across generations creates
different extremal behaviour.

We develop a finite-tree framework for studying sharp constants
of functionals on BMO. The framework connects inequalities on
an entire filtration, inequalities on a rooted descendant tree,
and finite-dimensional optimization over leaf values. Its main
contributions concern finite reduction, attainment, rationality,
and the effect of imposing zero root average. Three applications
illustrate these results: Jensen-defect Carleson functionals,
the absolute-value map, and martingale-difference envelopes.

Our setting is a homogeneous $q$-ary filtration $\cT$ on a
measure space $(X,\mu)$, in which every node has $q$ children
of equal measure. Slavin and Vasyunin study rooted
$\alpha$-trees, allowing unequal child measures bounded below
by the fraction $\alpha$ of their parent measure
\cite[Definition~1.4 and Remark~1.5]{SlavinVasyunin2016AlphaTrees}.
Each rooted descendant tree in our setting is a homogeneous
special case with $\alpha=1/q$. Our global filtration has
generations indexed by $\Z$, so every node has ancestors
at arbitrarily large scales (\cref{def:tree}). This
bi-infinite structure accommodates both standard dyadic
cubes in $\R^d$, with $q=2^d$, and balls in non-Archimedean
local fields with residue cardinality prime power $q$. For BMO and
singular integrals over local fields, see
\cite{DalyKurtz1993BMOlocalfields}.

For real-valued functions, we use
\[
 f_I:=\frac1{\mu(I)}\int_I f\,d\mu,
 \qquad
 \osc_I(f):=\frac1{\mu(I)}\int_I|f-f_I|\,d\mu,
 \qquad
 \|f\|_\ast:=\sup_{I\in\cT}\osc_I(f).
\]
The local seminorm $\|f\|_{\ast,B}$ takes the supremum over
descendants of a fixed root $B$. The choice of
$\operatorname{L}^1$-BMO is important here. Absolute value
does not increase the mean-centered $\operatorname{L}^2$-BMO
seminorm, since taking absolute values cannot increase
variance. For $\operatorname{L}^1$-BMO, the arithmetic mean
need not minimize average absolute deviation, and the
composition constant can exceed $1$. The extension,
finite reduction, anchoring, and centering methods developed here extend
to $\operatorname{L}^p$-BMO, $1\le p<\infty$, with suitable
bounds. The $\operatorname{L}^1$ setting has the further
advantage that its finite-depth constraints are given by
rational linear inequalities for which the associated finite-depth constants are rational with rational maximizer.

The general framework is developed in \Cref{sec:formalism}.
For compatible positively homogeneous local functionals
satisfying a lower semicontinuity condition, the local and
global sharp constants equal the supremum of the
finite-depth constants (\cref{thm:reduction}). Finite
anchoring accounts for the dependence of a functional
on additive constants and reduces the finite-depth
problem to compact sets. When the functional is rational
piecewise linear and the anchors are rational, the sharp
constant is rational and is attained by a rational vector
(\cref{thm:finite-anchor}).

Stable centering relates the centered and uncentered
problems. For hereditary functionals admitting zero-leaf
anchoring, every admissible depth-$m$ configuration gives
a centered admissible configuration of depth at most
$m+\lceil qm/(q-1)\rceil$ with no smaller functional value
(\cref{thm:stable-centering}). Consequently, the centered
and uncentered constants have the same supremum over all
depths. A corresponding result holds for general finite
anchors (\cref{cor:general-centering}).

The first application concerns Jensen defects
(\Cref{sec:jensen}). The Jensen gap on a node decomposes
exactly into nonnegative contributions from successive
generations, and the Carleson norm of these contributions
equals the supremum of the Jensen gaps. 
For $\Phi(t)=|t|$, optimizing over translations recovers
the input BMO seminorm and gives node anchoring, while
anchoring at leaf values fails (\cref{thm:node-anchor}).
For every convex Lipschitz $\Phi$, the sharp packing
constant is half the difference between its limiting
slopes at $+\infty$ and $-\infty$, whether or not $\Phi$
is homogeneous (\cref{thm:slope-variation}).

The principal application is the sharp constant $c(q)$ on a bi-infinite $q$-ary tree in
\[
 \|\,|f|\,\|_\ast\le c(q)\|f\|_\ast.
\]
Write $c_m(q)$ for the corresponding constant at depth
$m$ and $\bar c_m(q)$ for the constant with zero root
average (centered case). The absolute-value functional admits zero-leaf
anchoring, and the formalism gives
\begin{equation}\label{eq:intro-centering}
 c(q)=\sup_{m\ge1}c_m(q)=\sup_{m\ge1}\bar c_m(q).
\end{equation}
Both finite-depth constants are rational and attained
by rational vectors (\cref{prop:absolute-structure}),
and an explicit family of rational linear programs
computes $c_m(q)$ (\Cref{exact-family-finite-linear-programs}).

A sharp depth-$1$ constant for every $q>1$ is
\begin{equation}\label{eq:intro-shallow}
 c_1(q)=\max\{1,2-4/q\}
\end{equation}
(\cref{thm:scalar-sharp,cor:finite-abs}).
Contractivity fails at greater binary depths
(\cref{thm:binary-counterexample}), where binary refers to the case $q=2$. A recursive binary
construction gives the lower bound $25/24$, while a
centered depth-three construction for $q\ge3$ improves
the depth-two bound $\Delta_q:=2-4/q+4/q^2$
(\cref{thm:binary-recursive,thm:q-counterexample}).
Together with the uniform upper estimate in
\cref{thm:global-upper}, these results yield
\begin{align}
 \frac{25}{24}
 &\le c(2)\le\frac{18}{11},
 \label{eq:intro-binary}\\
 \Delta_q+\frac{q-2}{q^3}
 &\le c(q)\le\frac{2(q+1)^2}{q^2+3q+1},
 \qquad q\ge3.
 \label{eq:intro-general}
\end{align}
The lower constructions can be centered, so requiring
zero root average does not restore contractivity.

The upper estimate is strictly smaller than $2$ for
every fixed $q$. It combines a scalar variance bound
with Slavin and Vasyunin's comparison between
$\operatorname{L}^1$- and $\operatorname{L}^2$-BMO
\cite[Theorem~1.8]{SlavinVasyunin2016AlphaTrees}.
This connects our approach with Bellman-function
methods for sharp BMO inequalities
\cite{SlavinVasyunin2016AlphaTrees,IvanisviliEtAl2018BMO2Evolution}.
The exact value of $c(q)$ remains open.

The final application studies envelopes of differences
between node averages and descendant averages
(\Cref{sec:martingale}). We determine the sharp constants
for the $\operatorname{L}^p$ norm of one-generation
differences and the sharp endpoint constants for
cumulative differences (\cref{thm:gradient,thm:cumulative}).
These examples clarify the role of the rationality
hypotheses: for $q=3$ and $p=2$, the one-generation
constant is already irrational at finite depth.
Weighted envelopes with rational coefficients have
rational constants at every finite depth, yet those
constants may increase strictly towards a rational
or irrational limit (\cref{prop:weighted}).
The global sharp constant can nevertheless be attained
on the bi-infinite filtration. These examples distinguish
finite-depth attainment, stabilization, and global
attainment within the same framework.

\section{Formalism over a Measure Space}\label{sec:formalism}
The viewpoint of oscillation relative to a family of measurable sets is
consistent with \cite{Dafni2019BMOonShapes}. The filtration below is an exactly homogeneous version of a rooted $\alpha$-tree
\cite[Definition~1.4 and Remark~1.5]{SlavinVasyunin2016AlphaTrees}, with ancestors added at every generation. Homogeneous trees occur in the study of dyadic-like maximal operators; see \cite{MelasDyadicLlogL}.

\begin{defn}[Strong homogeneous $\alpha$-tree]\label{def:tree}
Let $(X,\mu)$ be a measure space, and let $0<\alpha<1$. A \emph{strong homogeneous $\alpha$-tree} is a family
$\cT=\bigcup_{n\in\Z}\cT_n$ satisfying the following conditions.
\begin{enumerate}
\item For every $n\in\mathbb Z$, the family $\cT_n$ is a partition of $X$ up to null sets; \textit{i.e.}, \(X=\bigcup_{I\in\cT_n}I\) and that there exists a measurable
set \(X_n\subseteq X\), with \(\mu(X\setminus X_n)=0\), such
that every \(x\in X_n\) belongs to exactly one member of
\(\cT_n\).
\item Every $I\in\cT$ has finite positive measure.
\item The partition $\cT_{n+1}$ refines $\cT_n$. If $I\in\cT_n$, define its collection of children by \[\cT_1(I)
    :=
    \bigl\{
        J\in\cT_{n+1}: J\subseteq I
    \bigr\}.\] Then
    $I
    =
    \bigcup_{J\in\cT_1(I)}J$.
\item There is a fixed integer $q\ge2$ such that each node $I$ has exactly $q$
children, each of measure $\alpha\mu(I)$.
\item The filtration differentiates the space of \emph{tree-integrable functions}
\[
 \operatorname{L^1}(X,\cT):=\{f:X\to\R\text{ measurable}:\ \int_I|f|\,d\mu<\infty
                   \text{ for every }I\in\cT\}.
\]
The differentiation requirement is stated precisely on $X_\cT\subseteq X$ defined below.
\end{enumerate}
\end{defn}
The full-measure set \(X_n\) in item~(1) ensures that every
point outside a single measurable null set belongs to exactly
one node of \(\cT_n\). For an uncountable family, being pairwise disjoint up to null sets does not guarantee this property,
since their union need not be null. When \(\cT_n\) is countable,
the required set \(X_n\) is obtained by removing the union
of all pairwise intersections. Set
\begin{equation}\label{eq:common-domain}
 X_\cT:=\bigcap_{n\in\Z}X_n.
\end{equation}
Every $X_n$, and hence $X_\cT$, is measurable and has full measure. For $x\in X_\cT$, there is a unique $I_n(x)\in\cT_n$ containing $x$ at every generation, and $I_{n+1}(x)$ is a child of $I_n(x)$. The differentiation requirement in \cref{def:tree} is precisely
\[
 f_{I_n(x)}\longrightarrow f(x)\quad(n\to\infty)
 \qquad\text{for almost every }x\in X_\cT,
 \quad f\in \operatorname{L^1}(X,\cT).
\]
For the closed balls and half-open dyadic cubes in Subsections~~\ref{subsec:fields} and \ref{subsec:dyadic structure}, each $\cT_n$ is pairwise-disjoint, and so $X_\cT=X$.

Necessarily $\alpha=1/q$. 
The adjective \emph{strong} refers to the bi-infinite
generation structure, meaning each node $I\in\cT$ has an ancestor. This is in contrast to a rooted tree where $X\in\cT$. Each node $I$ has a unique generation $n$, called its \emph{depth}, and a
unique parent. Nodes that intersect on a set of positive measure are nested. If $I\in\cT_n$, write
\[
 \cT_k(I):=\{J\in\cT_{n+k}:J\subseteq I\},\qquad
 \cT(I):=\bigcup_{k\ge0}\cT_k(I).
\]
Then $|\cT_k(I)|=q^k$ and $\mu(J)=q^{-k}\mu(I)$ for
$J\in\cT_k(I)$. The descendant family $\cT(I)$ is a \emph{rooted} homogeneous
$\alpha$-tree. In particular, its space of tree-integrable functions is $\operatorname{L^1}(I)$. However, for a strong homogeneous $\alpha$-tree, $X$ cannot be a node, \ie $X\not\in\cT$, because $X$ has infinite measure and so a tree-integrable function on $\cT$ need not be in $\operatorname{L^1}(X)$.


Define $\BMO_\cT(X):=\{f\in \operatorname{L^1}(X,\cT):\|f\|_\ast<\infty\}$, 
\[
 \|f\|_{\ast,I}:=\sup_{J\in\cT(I)}\osc_J(f),\qquad
 \BMO_\cT(I):=\{f\in \operatorname{L^1}(I):\|f\|_{\ast,I}<\infty\}.
\]
The local seminorm vanishes exactly on functions constant almost everywhere
on $I$. Globally, its kernel consists of functions constant on each ancestor
component, rather than necessarily one constant on all of $X$. Here an ancestor component is the union of the successive
ancestors of a node.

\subsection{The setting over ultrametric local fields and dyadic
\texorpdfstring{$\R^d$}{Rd}}\label{subsec:setting}
Assume temporarily that every pair of points has a common ancestor:
\begin{equation}\label{eq:CA}
 \text{for every }x,y\in X,\text{ there is }n\in\Z
 \text{ with }I_n(x)=I_n(y).
\end{equation}
For $x,y\in X_\cT$, set
\[
 \ell(x,y):=\sup\{n\in\Z:I_n(x)=I_n(y)\},\qquad
 d_\cT(x,y):=q^{-\ell(x,y)},\quad q^{-\infty}:=0.
\]
\begin{prop}[The tree pseudo-ultrametric]\label{prop:metric}
Under \eqref{eq:CA}, $d_\cT$ is a pseudo-ultrametric on $X_\cT$ and
\[
 I_n(x)\cap X_\cT=\overline B_{d_\cT}(x,q^{-n})
       =B_{d_\cT}(x,q^{-(n-1)}), \quad \text{ for every }x\in X_\cT \text{ and }n\in\Z.
\]
Moreover, every positive-radius ball is the intersection of a node with $X_\cT$. If the partitions separate points, in the snse that for every $x,y\in X_\cT,\,x\neq y$ there exists $n\in\Z$ such that $I_n(x)\neq I_n(y)$, then
$d_\cT$ is an ultrametric.
\end{prop}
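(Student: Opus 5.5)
The plan is to derive everything from one monotonicity fact: the set $N(x,y):=\{n\in\Z: I_n(x)=I_n(y)\}$ is a down-set in $\Z$. Indeed, if $I_n(x)=I_n(y)$, then $I_{n-1}(x)$ and $I_{n-1}(y)$ are both the unique parent of this node, so they coincide. By \eqref{eq:CA} the set $N(x,y)$ is nonempty, so $\ell(x,y)\in\Z\cup\{+\infty\}$. Moreover $\ell(x,y)\ge n$ if and only if $I_n(x)=I_n(y)$, with the supremum attained whenever it is finite. This gives the key equivalence
\[
 d_\cT(x,y)\le q^{-n}\iff I_n(x)=I_n(y),
\]
which is used throughout.

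For the pseudo-ultrametric axioms, $d_\cT\ge0$ and $d_\cT(x,x)=0$ follow since $\ell(x,x)=+\infty$, and symmetry is clear. For the strong triangle inequality, set $n:=\min\{\ell(x,z),\ell(z,y)\}$. If $n=+\infty$, then all nodes of $x,z,y$ coincide at every generation and $\ell(x,y)=+\infty$. Otherwise $I_n(x)=I_n(z)=I_n(y)$, so $\ell(x,y)\ge n$, which means $d_\cT(x,y)\le\max\{d_\cT(x,z),d_\cT(z,y)\}$.

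For the ball identities, fix $x\in X_\cT$ and $n$. A point $y\in X_\cT$ lies in $I_n(x)$ exactly when $I_n(y)=I_n(x)$, using uniqueness of the $\cT_n$-node containing $y$ on $X_n$. By the key equivalence this holds exactly when $d_\cT(x,y)\le q^{-n}$, giving the closed ball. Since $d_\cT$ takes values in $\{q^{-k}:k\in\Z\}\cup\{0\}$, the condition $d<q^{-(n-1)}$ is equivalent to $d\le q^{-n}$, giving the open ball.

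For the statement that positive-radius balls are nodes, take a radius $r>0$ and let $n$ be the least integer with $q^{-n}\le r$ for closed balls, or with $q^{-n}<r$ for open balls. By the discreteness of the value set, the ball equals $\overline B(x,q^{-n})=I_n(x)\cap X_\cT$.

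Finally, if the partitions separate points, then for $x\ne y$ some $n$ fails to lie in $N(x,y)$. The down-set property then gives $\ell(x,y)<n<\infty$, so $d_\cT(x,y)>0$ and $d_\cT$ is an ultrametric.

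The only delicate point is the bookkeeping with the null sets: one must restrict to $X_\cT$ so that $I_n(y)$ is well-defined and membership in $I_n(x)$ forces $I_n(y)=I_n(x)$. Everything else is routine once the down-set property is established.
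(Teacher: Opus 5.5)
Your proof is correct and follows the same route as the paper. You use the down-set (nesting) property of the common generations, and transitivity of agreement at a fixed generation for the strong triangle inequality. You then combine the equivalence $y\in I_n(x)\iff\ell(x,y)\ge n$ with integrality of $\ell$ to get both ball identities and arbitrary positive radii, and use separation for positivity off the diagonal. Your write-up is only more explicit at a few points, namely attainment of the supremum, the choice of the least integer $n$ for a given radius, and the case $\ell=+\infty$.
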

\begin{proof}
All points considered in this proof belong to \(X_\cT\). The common-ancestor assumption guarantees that set of common generations is nonempty. If $I_n(x)=I_n(y)$, then $I_m(x)=I_m(y)$ for every $m\leq n$, by the nesting of the generations. Consequently, $\ell(x,y)>-\infty$. If $x,y$
agree at generation $n$ and $y,z$ agree there, then $x,z$ agree there. Hence
$\ell(x,z)\ge\min\{\ell(x,y),\ell(y,z)\}$, which gives the strong triangle
inequality after applying $q^{-t}$. Symmetry and a zero diagonal are immediate.
Moreover, $y\in I_n(x)$ if and only if $\ell(x,y)\ge n$. Since the finite
values of $\ell$ are integers, this is equivalent both to
$d_\cT(x,y)\le q^{-n}$ and to $d_\cT(x,y)<q^{-(n-1)}$.
The same discreteness identifies balls of arbitrary positive radius.
Finally, zero distance means agreement at every generation, which is equality
of points under the separation assumption.
\end{proof}

\subsubsection{Local ultrametric fields}\label{subsec:fields}
Let $K$ be a non-Archimedean local field, \textit{i.e.}, a complete discretely valued field with respect to a non-Archimedean absolute value $|\cdot|_K$, which is locally compact. We denote by $\mathcal O_K:=\{x\in K:|x|_K\le 1\}$ the ring of integers of $K$. This is a local ring having residue field of size $q$. Choose a uniformizer \(t\)
and normalize the absolute value by \(|t|_K=q^{-1}\).
The nonzero elements have the unique form $ut^n$, with $u$ a unit in $\mathcal O_K^\times$
and $n\in\Z$. 
The field $K$ carries a \emph{doubling} Haar measure $\mu$, unique up to a multiplicative constant, normalized so that $\mu(\mathcal O_K)=1$. The closed balls are
\[
 B=a+t^n\mathcal O_K,\qquad \mu(B)=q^{-n}.
\]
Their elementary structure is described in \cite{SerreLocalFields,Taibleson1975}.
Every ball has $q$ children, the balls at each scale partition $K$, and any two
points have a common ancestor. Balls separate points. On each fixed ball $B$, the descendant balls form a differentiation basis.
Consequently these balls form a strong
homogeneous $q^{-1}$-tree. The associated space is the usual ball BMO on $K$;
see \cite{DalyKurtz1993BMOlocalfields} for its harmonic-analytic setting. Here $\operatorname{L^1}(\cT)=\operatorname{L^1}_{\mathrm{loc}}(K)$. Indeed, balls are compact and every
compact set is covered by finitely many balls. The kernel of $\|\cdot\|_\ast$ is
$\R$, and $\BMO(K)/\R$ is complete. 

\subsubsection{Dyadic \texorpdfstring{$\R^d$}{Rd}}\label{subsec:dyadic structure}
The standard half-open grid
\[
 \mathscr D(\R^d)=\{2^{-k}([0,1)^d+j):k\in\Z,\ j\in\Z^d\}
\]
has $q=2^d$ children per cube, each of relative measure $2^{-d}$.
It differentiates locally integrable functions and gives classical dyadic BMO.
Here tree integrability agrees with local integrability because a compact set
meets finitely many cubes at any fixed scale. The standard grid need not
satisfy \eqref{eq:CA}; already in one dimension, points on opposite sides of
zero have no common dyadic ancestor. Thus \cref{prop:metric} is not applied on dyadic case. For relations between dyadic and ordinary BMO, see \cite{GarnettJones1982}.

\subsection{A finite-tree reduction to step functions}
\label{subsec:reduction}
We label the children of each \(I \in \cT\) as
$\cT_1(I) = \{I_1, \dots, I_{q}\}$.
Then, for a word
$\omega = (\omega_1, \dots, \omega_k)
\in \{1, \dots, q\}^k$,
write \(I_\omega\) for the corresponding descendant of \(I\), defined recursively by
$I_{\omega_1,\dots,\omega_k}
=
\bigl(I_{\omega_1,\dots,\omega_{k-1}}\bigr)_{\omega_k}$.
Thus,
$\cT_k(I)
=
\{I_\omega : |\omega| = k\}$.
The empty word corresponds to the root.

For two roots $I,I'\in\cT$, the tree isomorphism
$T_{I,I'}:\cT(I)\to\cT(I'),\,I_\omega\mapsto I'_\omega$ sends every descendant of \(I\) to the corresponding descendant of \(I'\) at the same relative depth (distance to the root) and
preserves relative (to the root) measure.

In a general measure space $T_{I,I'}$ need not arise from
a pointwise bijection. However, for ultrametric local fields, there is a concrete pointwise realization. If
$B=b+t^k\mathcal O_K$ and $B'=b'+t^{k'}\mathcal O_K$, then
\[
 x\longmapsto b'+t^{k'-k}(x-b)
\]
is an affine bijection from $B$ to $B'$ preserving relative measures and
relative descendant depths. Labels can be chosen compatibly with this map.

\begin{defn}[Depth-$m$ functions]
A \emph{depth-$m$ function} on $I$ is constant on every member of $\cT_m(I)$;
the space of these restrictions is denoted by $\cA_m(I)$. Extensions outside
$I$ will always be specified separately, when necessary.
\end{defn}

\begin{defn}[Compatible local functionals]\label{def:scN}
For the remainder of this section, let 
$\scN_I:\BMO_\cT(I)\to[0,\infty]$ for $I
\in\cT$, and  define
\[
 \scN(f):=\sup_{I\in\cT}\scN_I(f|_I),\quad f\in\BMO_\cT(X).
\]
We assume the following properties:
\begin{enumerate}
\item \textbf{Positive Homogeneity:} $\scN_I(\lambda f)=\lambda\scN_I(f)$ for every $\lambda>0$ and $I\in\cT$, with $\scN_I(f)=0$, whenever $f$ is constant almost everywhere on $I$.

\item \textbf{Invariance under rooted tree isomorphisms:} Let $I,I'\in\cT$, and let $T_{I,I'}$ be a rooted tree isomorphism preserving relative depths. If $f\in\cA_m(I)$, write $T_{I,I'}f$ for the depth-\(m\) function having the same values as \(f\) on the
corresponding descendants. Then $\scN_{I'}(T_{I,I'}f)=\scN_I(f)$.
\end{enumerate}
\end{defn}
We refer to $\scN_I$ as the local functional and $\scN$ as its global envelope. This distinction matters even when the input is supported in a single node.

Define, with values allowed in $[0,\infty]$,
\[
 c_{\scN}:=\sup_{\|f\|_\ast\le1}\scN(f),\qquad
 c_{\scN}(I):=\sup_{\|f\|_{\ast,I}\le1}\scN_I(f).
\]
These constants are the norms of the corresponding positively
homogeneous functionals relative to the $\operatorname{L}^1$-BMO seminorm.
 Set
\[
 c_{\scN,m}(I):=\sup\{\scN_I(f):f\in\cA_m(I),\ \|f\|_{\ast,I}\le1\}.
\]
Equal relative leaf measures and invariance under rooted tree isomorphisms show that this
quantity is independent of $I$; write it as $c_{\scN,m}$. Every depth-$m$ function is also a depth-$m+1$ function, so
$c_{\scN,m}\le c_{\scN,m+1}$. We call $c_\scN$ the global sharp constant and $c_{\scN,m}$ the sharp constant at depth-$m$.

\begin{lemma}[Norm-preserving extension]\label{lem:extension}
If $f\in\BMO_\cT(I)$, define $\widehat f=f$ on $I$ and $\widehat f=f_I$
on $X\setminus I$. Then
\[
 \|\widehat f\|_\ast=\|f\|_{\ast,I},\quad \scN(\widehat f)\ge\scN_I(f).
\]
In particular, a centered local function has a norm-preserving zero extension.
\end{lemma}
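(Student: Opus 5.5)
The plan is to compute $\osc_J(\widehat f)$ node by node, splitting according to how $J\in\cT$ sits relative to $I$. Two nodes meeting on a set of positive measure are nested, so up to null sets every $J$ falls into one of three cases: $J\subseteq I$, $J\cap I$ null, or $J\supsetneq I$ (so $J$ is an ancestor of $I$).

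The first two cases are immediate. If $J\subseteq I$, then $J\in\cT(I)$ and $\widehat f=f$ on $J$, so $\osc_J(\widehat f)=\osc_J(f)$. If $J\cap I$ is null, then $\widehat f=f_I$ a.e.\ on $J$ and $\osc_J(\widehat f)=0$.

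The ancestor case $J\supsetneq I$ carries the only real content. On $J$ we write $\widehat f=f_I+(f-f_I)\one_I$. The average of $(f-f_I)\one_I$ over $J$ vanishes, so $\widehat f_J=f_I$. Hence
\[
 \osc_J(\widehat f)=\frac1{\mu(J)}\int_I|f-f_I|\,d\mu=\frac{\mu(I)}{\mu(J)}\osc_I(f)\le\osc_I(f).
\]
Taking the supremum over all three cases gives $\|\widehat f\|_\ast=\sup_{J\in\cT(I)}\osc_J(f)=\|f\|_{\ast,I}$. Equality holds because the nodes in $\cT(I)$ already attain the local supremum. This also shows that $\widehat f$ is tree-integrable: on any node, $\int|\widehat f|\,d\mu$ is at most $\int_I|f|\,d\mu+|f_I|\mu(J)<\infty$.

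For the functional inequality, $\scN(\widehat f)=\sup_{J\in\cT}\scN_J(\widehat f|_J)\ge\scN_I(\widehat f|_I)=\scN_I(f)$, since $\widehat f|_I=f$ and $I$ is one of the nodes in the supremum. The centered statement is the special case $f_I=0$, in which $\widehat f$ is the zero extension. Nothing here is delicate; the one point to check is the ancestor computation, namely that the mean over $J$ is exactly $f_I$, and this rests only on the nesting property of the filtration.
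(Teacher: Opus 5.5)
Your proposal is correct and follows the paper's proof: the same trichotomy of descendant, disjoint, and ancestor nodes, with the key ancestor computation $\widehat f_P=f_I$ and $\osc_P(\widehat f)=\frac{\mu(I)}{\mu(P)}\osc_I(f)$. Your extra checks, namely the tree-integrability of $\widehat f$ and the restriction argument giving $\scN(\widehat f)\ge\scN_I(f)$, make explicit what the paper leaves implicit.
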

\begin{proof}
Descendant oscillations are unchanged. A node disjoint from $I$ has zero
oscillation. If $P\supset I$, then $\widehat f_P=f_I$ and
\[
 \osc_P(\widehat f)=\frac{\mu(I)}{\mu(P)}\osc_I(f)
                   \le\|f\|_{\ast,I}.
\]
These are the only possible positions of a tree node relative to \(I\). Taking the supremum over
all tree nodes therefore gives equality.
\end{proof}
Related questions about transferring sharp BMO constants
between domains are studied by Stolyarov and Zatitskiy
\cite{StolyarovZatitskiy2021Transference}.
For the present tree setting, \cref{lem:extension} obtains
a norm-preserving extension directly from the nesting of
the averaging sets.

For $f\in \operatorname{L^1}(I)$ define finite-depth averages
\[
 E_m^I f:=\sum_{J\in\cT_m(I)} f_J\chi_J\in\cA_m(I).
\]
These can be seen as conditional expectations on the probability space
$(I,\mu/\mu(I))$. Since $\cT$ differentiates $\operatorname{L^1}(X,\cT)$, we have
\begin{equation}\label{Emf converges to f}
E_m^I f\longrightarrow f
\qquad
\text{a.e. on }I
\quad\text{and in }\operatorname{L^1}(I).
\end{equation}
Indeed, the local differentiation hypothesis follows from the global one by extending
an $\operatorname{L^1}(I)$ function by zero, which is tree-integrable. To obtain convergence in \(\operatorname{L}^1(I)\), put
\(f^{(M)}=\max\{-M,\min\{f,M\}\}\). The averaging operators
are contractions on \(\operatorname{L}^1(I)\), and
\(E_m^I f^{(M)}\to f^{(M)}\) in \(\operatorname{L}^1(I)\). Hence
\[
 \limsup_{m\to\infty}\|E_m^I f-f\|_{\operatorname{L}^1(I)}
 \le 2\|f-f^{(M)}\|_{\operatorname{L}^1(I)}.
\]
Letting \(M\to\infty\) proves \eqref{Emf converges to f}
\begin{lemma}[Contractive approximation]\label{lem:expectation}
Let $I\in\cT_k$. For every $f\in\BMO_\cT(I)$,
\[
 \|E_m^I f\|_{\ast,I}\le\|f\|_{\ast,I},\qquad
 \|E_m^I f\|_{\ast,I}\longrightarrow\|f\|_{\ast,I}.
\]
\end{lemma}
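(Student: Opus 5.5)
We prove the two assertions separately: first the contraction, node by node, and then the convergence by a lower-semicontinuity argument using \eqref{Emf converges to f}.

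\emph{Step 1 (contraction).} Fix $J\in\cT_j(I)$ and write $g:=E_m^I f$.
\begin{itemize}
\item[(a)] If $j\ge m$, then $J$ lies inside a single member of $\cT_m(I)$, so $g$ is constant on $J$ and $\osc_J(g)=0$.
\item[(b)] If $j<m$, then $J$ is a union of members $L\in\cT_m(I)$. Averages are compatible with nesting, so $g_J=f_J$. Jensen's inequality on each $L$ gives $|f_L-f_J|\le \frac1{\mu(L)}\int_L|f-f_J|\,d\mu$. Summing over $L\subseteq J$ with weights $\mu(L)/\mu(J)$ yields $\osc_J(g)\le\osc_J(f)$.
\end{itemize}
Taking the supremum over $J\in\cT(I)$ gives $\|E_m^If\|_{\ast,I}\le\|f\|_{\ast,I}$.

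\emph{Step 2 (convergence).} By Step 1 it suffices to show
\[
\liminf_{m\to\infty}\|E_m^If\|_{\ast,I}\ge \|f\|_{\ast,I}.
\]
Given $\varepsilon>0$, choose $J\in\cT(I)$ with $\osc_J(f)>\|f\|_{\ast,I}-\varepsilon$; if $\|f\|_{\ast,I}$ is infinite, choose $\osc_J(f)$ arbitrarily large instead. By \eqref{Emf converges to f}, $E_m^If\to f$ in $\operatorname{L}^1(I)$, hence in $\operatorname{L}^1(J)$. Therefore $(E_m^If)_J\to f_J$ (indeed it equals $f_J$ once $m$ exceeds the depth of $J$), and
\[
\frac1{\mu(J)}\int_J\bigl|(E_m^If-f_J)-(f-f_J)\bigr|\,d\mu\to0.
\]
This gives $\osc_J(E_m^If)\to\osc_J(f)$, so the $\liminf$ is at least $\|f\|_{\ast,I}-\varepsilon$. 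Letting $\varepsilon\to0$ proves the claim. Since $c_{\scN,m}$ is increasing in $m$, the same reasoning shows the sequence $\|E_m^If\|_{\ast,I}$ itself is nondecreasing for $m$ beyond the depth of any fixed $J$, but only the $\liminf$ bound is needed.

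\emph{Main obstacle.} The only delicate point is the $\operatorname{L}^1(I)$ convergence in \eqref{Emf converges to f}, which rests on the differentiation hypothesis and the truncation argument already given. The rest is Jensen's inequality together with the identity $(E_m^If)_J=f_J$ for $J$ of depth at most $m$.
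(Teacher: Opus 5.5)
Your proof is correct and follows the paper's argument: Jensen's inequality on the depth-$m$ partition of nodes above depth $m$, constancy on deeper nodes, and then $\operatorname{L}^1$ convergence on a fixed near-extremal node. The one difference is that you use $(E_m^If)_J=f_J$ where the paper uses the factor-$2$ oscillation estimate \eqref{eq:osc-continuity}.

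A small aside: your claim that $m\mapsto\|E_m^If\|_{\ast,I}$ is nondecreasing does not follow from the monotonicity of $c_{\scN,m}$. It does hold for every $m$, via $E_m^I=E_m^IE_{m+1}^I$ together with your Step 1, but as you note it is not needed.
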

\begin{proof}
We can assume that $k=0$. On a descendant $P$ above depth $m$, the average is unchanged and Jensen's
inequality gives $\osc_P(E_m^I f)\le\osc_P(f)$. Indeed, if \(P\) has depth \(r\le m\), its depth-\(m\) descendants partition \(P\), and
\(E_m^I f=f_J\) on each such descendant \(J\). Since their weighted average
is \(f_P\), Jensen's inequality gives
\[
\osc_P(E_m^I f)
=\frac1{\mu(P)}\sum_{J\in\mathcal T_{m-r}(P)}
  \mu(J)|f_J-f_P|
\le \frac1{\mu(P)}\int_P|f-f_P|\,d\mu
=\osc_P(f).
\]
On a deeper descendant the
step function is constant. This proves contraction. For a fixed \(P\in\mathcal T(I)\), the \(\operatorname{L^1}(P)\) convergence
\(E_m^I f\to f\) and the estimate
\begin{equation}\label{eq:osc-continuity}
\left|\osc_P(g)-\osc_P(h)\right|
\le \frac{2}{\mu(P)}\int_P|g-h|\,d\mu
\end{equation}
imply that \(\osc_P(E_m^I f)\to\osc_P(f)\). Given \(\varepsilon>0\), choose
\(P\) such that \(\osc_P(f)>\|f\|_{*,I}-\varepsilon\). Since
\(\|E_m^I f\|_{*,I}\ge\osc_P(E_m^I f)\), we obtain
\[
\liminf_{m\to\infty}\|E_m^I f\|_{*,I}
\ge \osc_P(f)>\|f\|_{*,I}-\varepsilon.
\]
Combining this with the contraction
\(\|E_m^I f\|_{*,I}\le\|f\|_{*,I}\), proves
$\|E_m^I f\|_{*,I}\longrightarrow\|f\|_{*,I}$.
\end{proof}

\begin{thm}[Local, global, and finite-depth constants]\label{thm:reduction}
Suppose the local functionals satisfy \cref{def:scN} and
\begin{equation*}\tag{$\star$}\label{eq:LSC}
 \scN_I(f)\le\liminf_{m\to\infty}\scN_I(E_m^I f),
 \quad \text{ for every } f\in\BMO_\cT(I).
\end{equation*}
Then for every $I$,
\begin{equation}\label{eq:all-constants}
 c_{\scN}=c_{\scN}(I)=\sup_{m\ge1}c_{\scN,m}.
\end{equation}
\end{thm}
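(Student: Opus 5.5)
I will prove the chain of inequalities
\[
 \sup_{m\ge1}c_{\scN,m}\le c_{\scN}(I)\le \sup_{m\ge1}c_{\scN,m}
 \quad\text{and}\quad
 c_{\scN}(I)\le c_{\scN}\le \sup_{I'\in\cT}c_{\scN}(I'),
\]
together with the independence of $c_{\scN}(I)$ from $I$.

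\textbf{Step 1: finite depth below local.} The inequality $c_{\scN,m}\le c_{\scN}(I)$ is immediate, since $\cA_m(I)\subseteq\BMO_\cT(I)$. By the remark after the definition of $c_{\scN,m}$, the constant $c_{\scN,m}$ does not depend on $I$, so $\sup_m c_{\scN,m}\le c_{\scN}(I)$ for every $I$.

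\textbf{Step 2: local below finite depth.} Let $f\in\BMO_\cT(I)$ with $\|f\|_{\ast,I}\le1$. By \cref{lem:expectation}, each $E_m^If\in\cA_m(I)$ satisfies $\|E_m^If\|_{\ast,I}\le1$, so $\scN_I(E_m^If)\le c_{\scN,m}\le\sup_k c_{\scN,k}$. The lower semicontinuity hypothesis \eqref{eq:LSC} then gives
\[
 \scN_I(f)\le\liminf_{m\to\infty}\scN_I(E_m^If)\le\sup_k c_{\scN,k}.
\]
Taking the supremum over $f$ yields $c_{\scN}(I)=\sup_m c_{\scN,m}$ for every $I$. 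In particular, $c_{\scN}(I)$ is independent of $I$.

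\textbf{Step 3: local below global.} Take $f$ with $\|f\|_{\ast,I}\le1$. The extension $\widehat f$ of \cref{lem:extension} satisfies $\|\widehat f\|_\ast=\|f\|_{\ast,I}\le1$ and $\scN(\widehat f)\ge\scN_I(f)$. Hence $c_{\scN}(I)\le c_{\scN}$.

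\textbf{Step 4: global below local.} Take $f\in\BMO_\cT(X)$ with $\|f\|_\ast\le1$ and fix a node $J$. The restriction $f|_J$ lies in $\BMO_\cT(J)$, because $f$ is tree-integrable and $\|f|_J\|_{\ast,J}\le\|f\|_\ast\le1$. Therefore $\scN_J(f|_J)\le c_{\scN}(J)=c_{\scN}(I)$ by Step 2. Taking the supremum over $J$ gives $\scN(f)\le c_{\scN}(I)$, and so $c_{\scN}\le c_{\scN}(I)$.

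\textbf{Remarks on the main points of care.} The values may be $+\infty$; every step holds in $[0,\infty]$. Positive homogeneity is used only implicitly: the constants are defined with the normalization $\|f\|\le1$, and functions of seminorm zero give $\scN_I=0$.

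The main subtlety is the independence of $c_{\scN,m}$ from the root, which is needed to move between different nodes $J$ in Step 4. It relies on the rooted tree isomorphism $T_{I,J}$ preserving both the depth-$m$ seminorm and the functional. The seminorm is preserved because oscillations of step functions depend only on the leaf values and the equal relative measures. The functional is preserved by invariance property (2) of \cref{def:scN}.

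The other point needing care is Step 2, where \eqref{eq:LSC} is exactly what allows passage to the limit. One must check that it is applied to $f$ itself, not to a normalized version of $f$. This is fine because the bound $\scN_I(E_m^If)\le\sup_k c_{\scN,k}$ holds for every $m$ directly.
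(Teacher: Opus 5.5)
Your proposal is correct and follows the paper's proof step for step: the inclusion $\cA_m(I)\subseteq\BMO_\cT(I)$ gives the lower bound, and the contraction of $E_m^I$ (\cref{lem:expectation}) combined with $(\star)$ gives the local upper bound; the norm-preserving extension (\cref{lem:extension}) and the supremum of the local bounds over all nodes then give the two global inequalities. The only cosmetic difference is that you work on the unit ball instead of writing $\scN_I(E_m^If)\le c_{\scN,m}\|E_m^If\|_{\ast,I}$. This spares you the paper's separate treatment of $C=\infty$ and of constant $E_m^If$, and it shows that positive homogeneity is not needed for the equality of the constants themselves.
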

\begin{proof}
Put $C=\sup_m c_{\scN,m}$. The class defining $c_\scN(I)$ contains the class defining $c_{\scN,m}$ for every $m$,
so $C\le c_{\scN}(I)$. If \(C=\infty\), the asserted equalities follow immediately.
Assume that \(C<\infty\). Let $f\in\BMO_\cT(I)$. For each \(m\), \(E_m^I f\) has depth \(m\) and so, by homogeneity and the
definition of \(c_{\scN,m}\),
\[
\scN_I(E_m^I f)
\le c_{\scN,m}\|E_m^I f\|_{*,I}
\le C\|f\|_{*,I},
\]
where the last inequality uses \cref{lem:expectation}, so long as $\|E_m^I f\|_{*,I}\neq 0$. This inequality remains true when
\(\|E_m^I f\|_{*,I}=0\), by the definition of $\scN_I$ since \(E_m^I f\) is constant almost everywhere. Assumption
\eqref{eq:LSC} now gives
\[
\scN_I(f)
\le\liminf_{m\to\infty}\scN_I(E_m^I f)
\le C\|f\|_{*,I}.
\]
Thus $c_{\scN}(I)=C$.

For $f\in\BMO_\cT(X)$, $\scN(f)\le C\|f\|_\ast$ by taking the supremum of the
local inequalities. Conversely, extend any locally admissible $f$ by
\cref{lem:extension}. The extension has the same input seminorm, while
$\scN(\widehat f)\ge\scN_I(f)$. Hence $c_{\scN}\ge c_{\scN}(I)$.
\end{proof}
\begin{remark}\label{rem:lsc-only}
For nonlinear composition, $|E_m f|$ and $E_m|f|$ are different functions.
The contraction of conditional expectation cannot be applied to identify
their BMO seminorms. Condition \eqref{eq:LSC} avoids this issue. No assertion
of BMO convergence of $E_m f$ to $f$ is needed.
\end{remark}

\subsection{Leaf-value formulas for depth-\texorpdfstring{$m$}{m} functions}
\label{subsec:leaves}
Fix $I\in\cT$. Let $\Omega_{m,q}:=\{1,\ldots,q\}^m$, $V_{m,q}:=\R^{\Omega_{m,q}}$, and
$\one=(1,\ldots,1)$. For a prefix $\sigma$ let $I_\sigma=\{\omega:\omega\mid_{|\sigma|}=\sigma\}$ be the set of leaves
below it and $n_\sigma=|I_\sigma|=q^{m-|\sigma|}$. Here, we also use \(I_\sigma\) to denote the node containing depth-$m$ descendants indexed by those leaves. To avoid cumbersome notation, we adopt this convention and specify the intended meaning whenever necessary. Note that each node \(I_\sigma\) is uniquely determined by the set of leaves below it.

Empty prefix $\emptyset$ corresponds to the root $I$. For $a\in V_{m,q}$ set
\[
 A_\sigma(a):=\frac1{n_\sigma}\sum_{\omega\in I_\sigma}a_\omega,
 \qquad D_\sigma(a):=\frac1{n_\sigma}
          \sum_{\omega\in I_\sigma}|a_\omega-A_\sigma(a)|,
\]
\[
 F_{m,q}(a):=\max_{|\sigma|\le m}D_\sigma(a).
\]
For the depth-$m$ function $f_a$ with leaf values $a$, we have $(f_a)_{I_\sigma}=A_\sigma(a)$ and $\|f_a\|_{\ast,I}=F_{m,q}(a)$.
The seminorm $F_{m,q}$ has kernel $\R\one$. Define
\[
 \Sigma_{m,q}:=\{F_{m,q}\le1\},\qquad
 \overline\Sigma_{m,q}:=\{a\in\Sigma_{m,q}:A_\emptyset(a)=0\}, \qquad \scN_m:V_{m,q}\to [0,\infty],\, a\mapsto\scN_I(f_a).
\]
Then
\begin{equation}\label{eq:finite-optimization}
 c_{\scN,m}=\sup_{a\in\Sigma_{m,q}}\scN_m(a).
\end{equation}
This constant is independent of the choice of $I$.
A supremum is used here; its attainment requires an additional hypothesis, which we discuss in \Cref{subsec:anchors}.
If $\scN$ is translation invariant, the supremum can be restricted to
$\overline\Sigma_{m,q}$.

\begin{lemma}[Edge and amplitude estimates]\label{lem:edge}
For a child $J$ of $I$,
\begin{equation}\label{eq:edge-average-bound}
 |f_J-f_I|\le\frac q2\osc_I(f).
\end{equation}
Consequently, if $F_{m,q}(a)\le1$, then
\[
 |a_\omega-A_\emptyset(a)|\le\frac{qm}{2}.
\]
If the value of one leaf is zero, $|A_\emptyset(a)|\le qm/2$ and $|a_\omega|\le qm$.
\end{lemma}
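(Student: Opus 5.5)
The plan is to prove the single-edge bound \eqref{eq:edge-average-bound} by a mass-counting argument, and then obtain the leaf estimates by summing it along the root-to-leaf path.

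\emph{Single edge.} Let $J$ be a child of $I$, so $\mu(J)=\mu(I)/q$. Since $f-f_I$ has zero integral over $I$, its positive and negative parts have equal integrals on $I$, and therefore
\[
 \int_I (f-f_I)_+\,d\mu=\int_I (f-f_I)_-\,d\mu=\tfrac12\mu(I)\osc_I(f).
\]
Suppose $f_J\ge f_I$; the other case is symmetric. Then
\[
 \mu(J)(f_J-f_I)=\int_J (f-f_I)\,d\mu\le\int_J (f-f_I)_+\,d\mu\le\tfrac12\mu(I)\osc_I(f).
\]
Dividing by $\mu(J)=\mu(I)/q$ gives $|f_J-f_I|\le \tfrac q2\osc_I(f)$. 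The key point, and the only step needing care, is using the half-mass identity rather than the cruder bound $\int_J|f-f_I|\le\mu(I)\osc_I(f)$. The cruder bound loses the factor $1/2$ that the later leaf estimates require.

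\emph{Leaf estimates.} Let $a\in V_{m,q}$ with $F_{m,q}(a)\le 1$, fix a leaf $\omega$, and consider the chain of prefixes $\emptyset=\sigma_0,\sigma_1,\dots,\sigma_m=\omega$. The leaf node itself has $A_{\omega}(a)=a_\omega$. Each consecutive pair is a parent--child pair, and $D_{\sigma_j}(a)\le 1$. Applying \eqref{eq:edge-average-bound} to the step function $f_a$ gives
\[
 |A_{\sigma_{j+1}}(a)-A_{\sigma_j}(a)|\le \tfrac q2.
\]
Summing these $m$ increments with the triangle inequality yields $|a_\omega-A_\emptyset(a)|\le qm/2$.

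\emph{Zero leaf.} If some leaf satisfies $a_{\omega_0}=0$, the previous bound at $\omega_0$ gives $|A_\emptyset(a)|\le qm/2$. For any other leaf, the triangle inequality then gives
\[
 |a_\omega|\le |a_\omega-A_\emptyset(a)|+|A_\emptyset(a)|\le qm.
\]
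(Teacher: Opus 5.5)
Your proof is correct and follows essentially the paper's route. The paper first uses Jensen's inequality to pass to the zero-sum vector of child-average deviations $d_j=f_{I_j}-f_I$ and then applies $|d_j|\le\frac12\sum_i|d_i|$, which is the discrete form of the half-mass identity you apply directly to $f-f_I$ on $I$; the leaf and zero-leaf estimates are obtained exactly as in the paper, by summing the edge bound along the root-to-leaf chain and using the triangle inequality.
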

\begin{proof}
Write $d_j=f_{I_j}-f_I$. Their sum is zero and
$q^{-1}\sum_j|d_j|\le\osc_I(f)$ by Jensen's inequality.
A zero-sum vector satisfies $|d_j|\le\frac12\sum_i|d_i|$.
This proves \eqref{eq:edge-average-bound}. Summation along a root-to-leaf
chain gives the remaining statements.
\end{proof}

\subsection{Finite anchoring and rational polyhedral extremizers}
\label{subsec:anchors}
Since \(F_{m,q}(a+c\mathbf 1)=F_{m,q}(a)\), the admissible set is
unbounded in the constant direction. To obtain compact optimization
problems, we seek a representative from each translation class without
decreasing the functional being maximized. The next definition formalizes
this through finitely many anchors: each anchor selects one point on a
translation line, and the functional's supremum over that line is captured
by these finitely many choices. The condition
\(\ell_j(\mathbf 1)\ne0\) ensures that each anchor hyperplane
\(\ell_j(a)=0\) meets every translation line exactly once.

A closed rational polyhedral cone is a region in $\R^N$ defined by an intersection of finitely many homogeneous linear inequalities with rational coefficient. A ploytope is a bounded polyhedron.
\begin{defn}\label{def:anchors}
A continuous functional $G:V_{m,q}\to[0,\infty)$ has \emph{finite anchoring} if there are linear forms $\ell_1,\ldots,\ell_N$ with
$\ell_j(\one)\ne0$ such that
\begin{equation}\label{eq:anchors}
 \sup_{c\in\R}G(a+c\one)=
 \max_{1\le j\le N}G\left(a-\frac{\ell_j(a)}{\ell_j(\one)}\one\right).
\end{equation}
It is \emph{rational piecewise linear} if it is linear with rational
coefficients on each member of a finite covering by closed rational
polyhedral cones. Anchors are rational when their coefficients are rational.
\end{defn}
``Cone'' means that if $x$ is in the region, then so is $tx$ for every $t\geq 0$.
\begin{lemma}[Compact transverse sections]\label{lem:compact}
If $\ell(\one)\ne0$, then
$P_\ell:=\Sigma_{m,q}\cap\ker\ell$ is compact. If $\ell$ is rational,
$P_\ell$ is a rational polytope.
\end{lemma}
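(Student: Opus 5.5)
We prove the two assertions of \cref{lem:compact} separately: $P_\ell$ is closed and bounded, and when $\ell$ is rational it is cut out by finitely many rational linear inequalities.

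\emph{Closedness.} Each $A_\sigma$ is linear, so each $D_\sigma$ is a finite average of absolute values of linear forms and is continuous. Hence $F_{m,q}$ is continuous, being a maximum of finitely many continuous functions. Therefore $\Sigma_{m,q}=\{F_{m,q}\le1\}$ is closed, and so is its intersection with the hyperplane $\ker\ell$.

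\emph{Boundedness.} Let $a\in P_\ell$ and put $b:=a-A_\emptyset(a)\one$. By \cref{lem:edge}, every coordinate satisfies $|b_\omega|\le qm/2$. Since $\ell(a)=0$, linearity gives
\[
 \ell(b)+A_\emptyset(a)\,\ell(\one)=0,
 \qquad\text{so}\qquad
 |A_\emptyset(a)|=\frac{|\ell(b)|}{|\ell(\one)|}\le\frac{\|\ell\|_1\,qm/2}{|\ell(\one)|},
\]
where $\|\ell\|_1$ is the sum of the absolute values of the coefficients of $\ell$ in the leaf basis. This step uses $\ell(\one)\ne0$ essentially. It follows that $|a_\omega|\le |b_\omega|+|A_\emptyset(a)|$ is bounded uniformly over $P_\ell$. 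Hence $P_\ell$ is compact.

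\emph{Rational polyhedral description.} We linearize each constraint $D_\sigma(a)\le1$. Write
\[
 n_\sigma D_\sigma(a)=\sum_{\omega\in I_\sigma}|L_\omega(a)|,
 \qquad L_\omega(a):=a_\omega-A_\sigma(a),
\]
where each $L_\omega$ is a rational linear form. Then $D_\sigma(a)\le1$ holds if and only if
\[
 \sum_{\omega\in I_\sigma}\epsilon_\omega L_\omega(a)\le n_\sigma
 \qquad\text{for every sign choice } \epsilon\in\{\pm1\}^{I_\sigma}.
\]
Indeed, the maximum of the left side over $\epsilon$ is exactly $\sum_{\omega}|L_\omega(a)|$. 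This gives finitely many inequalities with rational coefficients. Together with the equation $\ell(a)=0$ (two rational inequalities when $\ell$ is rational), they exhibit $P_\ell$ as a rational polyhedron. Being bounded, it is a rational polytope: by the standard Minkowski--Weyl theorem, its vertices solve rational linear systems and are therefore rational.

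Every step is routine. The only point needing care is the boundedness bound, which relies on $\ell(\one)\ne0$ to control the translation parameter $A_\emptyset(a)$.
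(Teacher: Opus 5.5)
Your proof is correct. The rational-polyhedral part matches the paper's argument. Both of you write each $D_\sigma$ as a maximum, over sign vectors $\varepsilon\in\{\pm1\}^{I_\sigma}$, of rational linear forms, and then intersect the resulting unit ball with the rational hyperplane $\ker\ell$.

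The difference is in boundedness. The paper argues softly. The restriction of $F_{m,q}$ to $\ker\ell$ has kernel $\ker\ell\cap\R\one=\{0\}$, so it is a norm on a finite-dimensional space. Its minimum on the Euclidean unit sphere of $\ker\ell$ is therefore positive, and its unit ball is bounded. You instead combine the amplitude estimate of \cref{lem:edge} with $\ell(a)=0$ to control the root mean, which gives the explicit bound
\[
 |a_\omega|\le\frac{qm}{2}\Bigl(1+\frac{\|\ell\|_1}{|\ell(\one)|}\Bigr)
 \qquad(a\in P_\ell).
\]

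The paper's route is shorter and uses only that the kernel of $F_{m,q}$ is exactly $\R\one$. It would therefore apply verbatim to any seminorm with that kernel. Yours relies on the tree-specific edge estimate, but it is quantitative. In particular, it yields directly $R_j\le qm\|\ell_j\|_1/\bigl(2|\ell_j(\one)|\bigr)$ for the constants in \cref{cor:general-centering}. There the paper only asserts finiteness through this lemma, and it separately derives the sharper bound $qr/2$ for node anchors $A_\sigma$ with $|\sigma|=r$.
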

\begin{proof}
The restriction of $F_{m,q}$ to $\ker\ell$ is a norm: its kernel is the
intersection of $\ker\ell$ with $\R\one$, which is zero.
Its unit ball is compact by finite dimensionality. More explicitly, the
minimum of $F_{m,q}$ on the Euclidean unit sphere of $\ker\ell$ is positive,
so $P_\ell$ is bounded and closed. Moreover,
\[
 D_\sigma(a)=\max_{\varepsilon\in\{-1,1\}^{I_\sigma}}
 \frac1{n_\sigma}\sum_{\omega\in I_\sigma}
       \varepsilon_\omega(a_\omega-A_\sigma(a)).
\]
Thus $F_{m,q}$ is a maximum of finitely many rational linear forms.
Intersecting its unit ball with a rational hyperplane gives the asserted
rational polytope.
\end{proof}

\begin{thm}[Finite-anchor extremizers]\label{thm:finite-anchor}
If $G$ is continuous and satisfies \eqref{eq:anchors}, then
\[
 \sup_{\Sigma_{m,q}}G=\max_{1\le j\le N}\max_{P_{\ell_j}}G.
\]
In particular, the supremum is attained. If $G$ is rational piecewise linear
and the anchors are rational, there is a rational extremizer and the maximum
is rational.
\end{thm}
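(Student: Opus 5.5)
The plan has two parts. First we prove the equality of suprema using translation invariance of $\Sigma_{m,q}$ together with the anchoring identity \eqref{eq:anchors}. Then we deduce attainment and rationality from \cref{lem:compact} and a vertex argument on rational polytopes.

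\textbf{Step 1: the inequality $\ge$.} Each $P_{\ell_j}$ is a subset of $\Sigma_{m,q}$, so
\[
\max_j\max_{P_{\ell_j}}G\le\sup_{\Sigma_{m,q}}G .
\]
The inner maxima exist because $P_{\ell_j}$ is compact by \cref{lem:compact} and $G$ is continuous.

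\textbf{Step 2: the inequality $\le$.} Fix $a\in\Sigma_{m,q}$. Since $F_{m,q}(a+c\one)=F_{m,q}(a)$, the whole line $a+\R\one$ lies in $\Sigma_{m,q}$. For each $j$ set $b_j:=a-\frac{\ell_j(a)}{\ell_j(\one)}\one$. Then $\ell_j(b_j)=0$, so $b_j\in P_{\ell_j}$. By \eqref{eq:anchors},
\[
G(a)\le\sup_c G(a+c\one)=\max_j G(b_j)\le\max_j\max_{P_{\ell_j}}G .
\]
Taking the supremum over $a$ gives the equality. Attainment follows because the right-hand side is a maximum of finitely many maxima over compact sets, and any maximizer lies in $\Sigma_{m,q}$.

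\textbf{Step 3: rationality.} Suppose $G$ and the anchors are rational. By \cref{lem:compact}, $P:=P_{\ell_j}$ is a rational polytope. Let $\{C_k\}$ be the finite covering by closed rational polyhedral cones on which $G$ agrees with a rational linear form $L_k$. Each $P\cap C_k$ is an intersection of finitely many rational linear inequalities and equations. It is therefore a bounded rational polyhedron, that is, a rational polytope, possibly empty. Since the $C_k$ cover $V_{m,q}$,
\[
\max_P G=\max_k\max_{P\cap C_k}L_k ,
\]
where each inner maximum uses the fact that $G=L_k$ on $C_k$.

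A linear form on a nonempty polytope attains its maximum at a vertex. Every vertex of a rational polytope is rational, since it is the unique solution of a rational linear system and Cramer's rule applies. The value $L_k$ at a rational vertex is rational because $L_k$ has rational coefficients. Taking the best $k$ and then the best $j$ yields a rational extremizer $a^\ast\in\Sigma_{m,q}$ with $G(a^\ast)=\sup_{\Sigma_{m,q}}G\in\mathbb Q$.

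\textbf{Main obstacle.} The only delicate point is in Step 3. One must check that $G$ agrees with $L_k$ on all of $P\cap C_k$; this holds because the covering is by closed cones and $G$ is defined to be $L_k$ there. One must also confirm that the pieces $P\cap C_k$ are genuinely rational polytopes, which follows from intersecting rational inequality systems.
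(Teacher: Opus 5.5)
Your proposal is correct and follows essentially the same route as the paper's proof. Both use the anchoring identity to translate every admissible vector into one of the compact sections $P_{\ell_j}$ without decreasing $G$, and both get rationality by intersecting each section with the linearity cones of $G$ and maximizing at rational vertices. Your version simply spells out both inequalities and the Cramer's-rule justification for rational vertices more explicitly.
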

\begin{proof}
Translations preserve $F_{m,q}$. Equation \eqref{eq:anchors} therefore moves
every admissible vector to one of the compact sections $P_{\ell_j}$ without
decreasing $G$. Compactness and continuity give attainment. In the rational
case, intersect each section with each linearity cone of $G$.
Every nonempty intersection is a compact rational polytope. A linear
functional attains its maximum at a vertex, every such vertex is rational,
and its value under a rational linear form is rational. There are finitely
many intersections.
\end{proof}

\begin{remark}[Exact certificates]\label{rem:certificates}
The proof gives a finite collection of rational linear programs. An upper
bound can be certified by feasible dual solutions on all cells, and a
matching rational primal vector certifies equality. Numerical optimization
can suggest a cell or an extremizer. The distinction between fixed depth and all depths
is equally important: a supremum of rational finite-depth constants need not
be rational or attained at a finite depth.
\end{remark}

\subsection{Value spectra}\label{subsec:spectra}
A leaf \(\omega\in\Omega_{m,q}\) is a \emph{zero leaf} of \(a\)
if its value is zero, that is, if \(a_\omega=0\).
\begin{defn}[Zero-leaf anchoring]\label{def:zero-leaf}
A family $\{\scN_m\}$ has \emph{zero-leaf anchoring} if
\[
 \sup_{c\in\R}\scN_m(a+c\one)
 =\max_{\omega\in\Omega_{m,q}}\scN_m(a-a_\omega\one)
\]
for every $m$ and $a$. In other words, for every \(m\ge1\), $\scN_m$ has the finite anchoring with anchors $\ell_\omega(a)=a_\omega$.
\end{defn}
Thus an optimizing translation can be chosen by subtracting one of
the leaf values: for some leaf \(\omega\), the translated vector
\(a-a_\omega\one\) has a zero entry and attains the supremum over
all translations.
\begin{cor}[Zero-leaf extremizer]\label{cor:zero-leaf-extremizer}
Assume that \(\{\scN_m\}\) is continuous and has the zero-leaf anchoring. Then, for every \(m\ge1\) and every fixed leaf
\(\omega_0\in\Omega_{m,q}\),
\[
c_{\scN,m}
=
\max\left\{
\scN_{m}(a):
a\in\Sigma_{m,q},\ a_{\omega_0}=0
\right\}.
\]
\end{cor}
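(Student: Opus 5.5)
The plan is to combine \cref{thm:finite-anchor}, applied with the leaf anchors $\ell_\omega(a)=a_\omega$, with the symmetry of the tree under relabeling of leaves. One could simply invoke \cref{thm:finite-anchor} directly, but it is cleaner to argue in two steps. By \eqref{eq:finite-optimization}, $c_{\scN,m}=\sup_{\Sigma_{m,q}}\scN_m$. The set $\{a\in\Sigma_{m,q}:a_{\omega_0}=0\}$ is contained in $\Sigma_{m,q}$, so its supremum is at most $c_{\scN,m}$. For the reverse inequality, take any $a\in\Sigma_{m,q}$. Zero-leaf anchoring produces a leaf $\omega$ with $\scN_m(a)\le\scN_m(a-a_\omega\one)$. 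The translate $b=a-a_\omega\one$ remains in $\Sigma_{m,q}$, because $F_{m,q}$ is translation invariant, and it satisfies $b_\omega=0$.

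The next step is to move the zero from the leaf $\omega$ to the prescribed leaf $\omega_0$. I would use a leaf permutation $\pi$ of $\Omega_{m,q}$ induced by a rooted tree automorphism: at each level, relabel the children so that the path to $\omega$ is carried to the path to $\omega_0$. For example, one can swap labels $\omega_k\leftrightarrow(\omega_0)_k$ among the children of each node at depth $k-1$, applied uniformly, which is a legitimate tree automorphism. Such a relabeling permutes the nodes of every generation and preserves the prefix structure. It therefore preserves each $D_\sigma$ up to reindexing, and hence preserves $F_{m,q}$.

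The same relabeling should preserve $\scN_m$. Here I would use the invariance in \cref{def:scN}: the function $f_{b\circ\pi^{-1}}$ is $T_{I,I}f_b$ for the isomorphism $\cT(I)\to\cT(I)$ given by the relabeled child ordering. Applying the invariance with $I'=I$ and with the new labeling gives $\scN_m(b\circ\pi^{-1})=\scN_m(b)$. The permuted vector $b'=b\circ\pi^{-1}$ then lies in $\Sigma_{m,q}$, satisfies $b'_{\omega_0}=0$, and has $\scN_m(b')\ge\scN_m(a)$. Taking the supremum over $a$ shows that the constrained supremum equals $c_{\scN,m}$.

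Attainment would follow from \cref{lem:compact} with $\ell=\ell_{\omega_0}$, since $\ell_{\omega_0}(\one)=1\ne0$. This makes the constrained set compact, and the continuity of $\scN_m$ then gives a maximum. The main obstacle is the relabeling step. The paper's invariance hypothesis is phrased for the isomorphism $T_{I,I'}$ attached to fixed labelings, so I must justify that it applies to any choice of labeling, in particular a relabeled copy of $I$. I would argue that the labels in \cref{def:scN} are arbitrary: the property is stated for every rooted, depth-preserving tree isomorphism. If that reading is too generous, the fallback is \cref{thm:finite-anchor} alone, which gives attainment on some section $P_{\ell_\omega}$ but not necessarily on the section for the prescribed leaf $\omega_0$.
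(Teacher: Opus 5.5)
Your proposal is correct and follows the paper's own argument. Zero-leaf anchoring gives an admissible translate with a zero leaf, and invariance under rooted tree automorphisms, which act transitively on the leaves, moves that zero to the prescribed leaf $\omega_0$; the paper reads the invariance hypothesis in \cref{def:scN} exactly as you do, and your obtaining attainment directly from the compactness of $P_{\ell_{\omega_0}}$ via \cref{lem:compact}, rather than transporting a maximizer supplied by \cref{thm:finite-anchor}, is only a cosmetic variation.
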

\begin{proof}
For continuous functionals, \cref{thm:finite-anchor} gives attainment.
Invariance under rooted tree automorphisms allows any prescribed leaf to
serve as the zero leaf, because these automorphisms act transitively on the
leaves. No pointwise affine self-map of the original measure space is needed.
\end{proof}
\begin{prop}[The value spectrum]\label{prop:spectrum}
Suppose $\scN_m$ is continuous, positively homogeneous, and finitely anchored.
Then
\[
 \{\scN_m(a):a\in\Sigma_{m,q}\}=[0,c_{\scN,m}].
\]
Consequently, consecutive finite-depth constants coincide exactly when their
value spectra coincide.
\end{prop}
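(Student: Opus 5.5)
The plan is to use that the image of a connected set under a continuous map is connected, applied on a suitable path-connected subset of $\Sigma_{m,q}$ that contains $0$ and an extremizer. Here $\Sigma_{m,q}$ is convex, since $F_{m,q}$ is a seminorm, and it contains $0$. Positive homogeneity with $\scN_m(0)=0$ (the zero vector is constant) gives value $0$ at the origin. By \cref{thm:finite-anchor}, which applies because $\scN_m$ is continuous and finitely anchored, the supremum $c_{\scN,m}$ is attained at some $a^\ast\in\Sigma_{m,q}$. This requires $c_{\scN,m}<\infty$; continuity into $[0,\infty)$ in \cref{def:anchors} guarantees it, since the maximum over the compact sections $P_{\ell_j}$ is finite.

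The main step is the path $t\mapsto ta^\ast$ for $t\in[0,1]$. Convexity of $\Sigma_{m,q}$, together with $0\in\Sigma_{m,q}$, keeps the path inside $\Sigma_{m,q}$. Along it, positive homogeneity gives $\scN_m(ta^\ast)=t\,c_{\scN,m}$ for $t>0$, and the value is $0$ at $t=0$. So every value in $[0,c_{\scN,m}]$ is realized, without even invoking the intermediate value theorem. Conversely, every value $\scN_m(a)$ with $a\in\Sigma_{m,q}$ lies in $[0,c_{\scN,m}]$ by \eqref{eq:finite-optimization}. This proves the displayed equality.

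For the consequence, note that the spectrum is a closed interval determined by its right endpoint. Hence the spectra at depths $m$ and $m+1$ coincide if and only if $c_{\scN,m}=c_{\scN,m+1}$. The only delicate point is attainment, since the right endpoint must be included in the spectrum. Finite anchoring supplies exactly this through \cref{thm:finite-anchor}; without it one would only get $[0,c_{\scN,m})$.
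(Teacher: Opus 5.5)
Your proof is correct and follows essentially the same argument as the paper. You obtain attainment of $c_{\scN,m}$ from \cref{thm:finite-anchor}, realize every value $t\,c_{\scN,m}$ along the admissible scaled vectors $ta^\ast$ by positive homogeneity, bound from above by the definition of the constant, and read off the consequence from the right endpoint of the interval. The opening appeal to connectedness is unnecessary, as you yourself note.
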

\begin{proof}
Let $a^\ast$ attain $c_{\scN,m}$. If this constant is positive, the vectors
$ta^\ast$, $0\le t\le1$, are admissible and have values $tc_{\scN,m}$.
If it is zero, the assertion is immediate. No value can exceed the maximum.
\end{proof}

\subsection{Stable centering and polarization}\label{subsec:centering}
\begin{defn}[Heredity]\label{def:hereditary}
The family $\{\scN_m\}$ is hereditary under descendant embeddings if
$\scN_M(\widetilde a)\ge\scN_m(a)$ whenever $a$ occurs unchanged as the leaf
configuration of a depth-$m$ descendant subtree of $\widetilde a\in V_{M,q}$.
\end{defn}
Define
\[
\bar c_{\scN,m}:=\sup_{a\in\overline\Sigma_{m,q}}\scN_m(a),\quad \bar c_{\scN}:=\sup_m \bar c_{\scN,m}
\]
Assume the functionals $\scN_m$ are continuous and positively homogeneous.
By \cref{lem:compact}, the centered
constant
\[
 \bar c_{\scN,m}
=\max_{a\in\overline\Sigma_{m,q}}\scN_m(a)
\]
is attained. These constants are nondecreasing.
\begin{thm}[Stable centering]\label{thm:stable-centering}
Suppose that the family \(\{\scN_m\}\) satisfies zero-leaf
anchoring and heredity. For every \(m\ge1\) and every
\(a\in\Sigma_{m,q}\), there exist an integer \(L\) and a
centered admissible vector \(\widetilde a\in\) such that
\[
 0\le L\le\left\lceil\frac{qm}{q-1}\right\rceil,
 \qquad
 \widetilde a\in\overline\Sigma_{m+L,q},
 \qquad
 \scN_{m+L}(\widetilde a)\ge\scN_m(a).
\]
Consequently,
\begin{align}
 c_{\scN,m}
 &\le
 \bar c_{\scN,m+\lceil qm/(q-1)\rceil},
 \label{eq:center-depth}\\
 \sup_{m\ge1}c_{\scN,m}
 &=\sup_{m\ge1}\bar c_{\scN,m}.
 \label{eq:center-sup}
\end{align}
If \(L\ge1\), the vector \(\widetilde a\) can be chosen
so that one child of its root has
mean \(\mu\), while each of the other \(q-1\) children
is constant with value \(-\mu/(q-1)\).
Then child averages are
\begin{equation}\label{eq:polarization}
 \mu,-\frac{\mu}{q-1},\ldots,-\frac{\mu}{q-1}.
\end{equation}
\end{thm}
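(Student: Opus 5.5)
The plan is to first normalize $a$ by zero-leaf anchoring, and then embed it unchanged at the bottom of a vertical chain of $L$ new generations. Along this chain the node averages are driven to $0$ in controlled steps. Heredity then transfers the functional value to the enlarged configuration.

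\emph{Step 1 (normalization).} By \cref{def:zero-leaf} there is a leaf $\omega$ with $\scN_m(a-a_\omega\one)=\sup_c\scN_m(a+c\one)\ge\scN_m(a)$. Set $b:=a-a_\omega\one$. Translation preserves $F_{m,q}$, so $b\in\Sigma_{m,q}$, $b_\omega=0$ and $\scN_m(b)\ge\scN_m(a)$. By \cref{lem:edge}, the root mean $\mu_0:=A_\emptyset(b)$ satisfies $|\mu_0|\le qm/2$. If $\mu_0=0$, take $L=0$ and $\widetilde a=b$.

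\emph{Step 2 (the chain construction).} For $L\ge1$, consider a depth-$(m+L)$ configuration with a chain $P_0\supset P_1\supset\cdots\supset P_L$, where $P_0$ is the new root and each $P_{k+1}$ is a fixed child of $P_k$. Place $b$ unchanged on $P_L$, and make each of the other $q-1$ children of $P_k$ constant with value $s_k$.

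Prescribe the chain means $M_L=\mu_0$ and $M_k=M_{k+1}-\delta_k$, where each step $\delta_k$ has the sign of $\mu_0$, satisfies $|\delta_k|\le (q-1)/2$, and the steps are chosen so that $M_0=0$. This forces
\[
s_k=M_k-\frac{\delta_k}{q-1}.
\]
Because $|\mu_0|\le qm/2$, one can use steps of full size $(q-1)/2$ except for a final partial one, and then
\[
L=\left\lceil \frac{|\mu_0|}{(q-1)/2}\right\rceil\le\left\lceil\frac{qm}{q-1}\right\rceil .
\]
The top level is exactly the polarization pattern \eqref{eq:polarization} with $\mu=M_1$: at $P_0$ the mean is $0$, so the constant siblings equal $-M_1/(q-1)$.

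\emph{Step 3 (admissibility).} Descendants of $P_L$ have the same oscillations as in $b$, and non-chain nodes are constant. For the chain nodes, the triangle inequality applied on each child gives
\[
\osc_{P_k}\le\frac1q\bigl(\osc_{P_{k+1}}+|\delta_k|\bigr)+\frac{q-1}{q}\cdot\frac{|\delta_k|}{q-1}
=\frac{\osc_{P_{k+1}}+2|\delta_k|}{q}.
\]
By downward induction from $\osc_{P_L}\le1$, this is at most $(1+(q-1))/q=1$. Hence $\widetilde a\in\overline\Sigma_{m+L,q}$.

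\emph{Step 4 (conclusions).} Heredity (\cref{def:hereditary}) gives $\scN_{m+L}(\widetilde a)\ge\scN_m(b)\ge\scN_m(a)$. Taking suprema and using monotonicity of $\bar c_{\scN,m}$ in $m$ yields \eqref{eq:center-depth}. The equality \eqref{eq:center-sup} follows since trivially $\bar c_{\scN,m}\le c_{\scN,m}$.

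The main obstacle is Step 3: calibrating the step sizes so that each chain oscillation stays $\le1$ while the mean moves linearly, rather than geometrically, towards $0$. This calibration is what yields the linear depth bound $\lceil qm/(q-1)\rceil$. The key estimate to verify carefully is the recursive inequality for $\osc_{P_k}$ above, in particular the bookkeeping of the sibling deviations $|s_k-M_k|=|\delta_k|/(q-1)$.
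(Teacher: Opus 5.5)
Your proposal is correct and follows essentially the same route as the paper. You use zero-leaf anchoring with \cref{lem:edge} to get $|\mu_0|\le qm/2$, then add ancestors whose other $q-1$ children are constant, moving the mean by at most $(q-1)/2$ per generation with the same bound $(1+2\delta)/q\le 1$, and conclude by heredity. The only cosmetic difference is that you build the whole chain $P_0\supset\cdots\supset P_L$ at once and invoke heredity once, whereas the paper adds one generation at a time and applies heredity at each step.
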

\begin{proof}
By zero-leaf anchoring, translate \(a\) so that the translated vector has a zero leaf and its
functional value does not decrease. Denote the resulting vector by \(b^{(0)}=a-a_{\omega_0}\one\).
It remains admissible, and its mean \(\mu_0\) satisfies
$|\mu_0|=|A_\emptyset(b^{(0)})|\le \frac{qm}{2}$
by \cref{lem:edge}. Given $b^{(r)}$ with mean
$\mu_r\ne0$, put
\[
 \delta_r=\min\{|\mu_r|,(q-1)/2\},\quad
 \mu_{r+1}=\mu_r-\sgn(\mu_r)\delta_r,\quad
 \kappa_r=\frac{q\mu_{r+1}-\mu_r}{q-1}.
\]
Form a new root whose first child carries \(b^{(r)}\) and whose other
\(q-1\) children are constant, each with value \(\kappa_r\). Call the
resulting vector \(b^{(r+1)}\). Its root mean is
$\frac{\mu_r+(q-1)\kappa_r}{q}=\mu_{r+1}$. Old descendant oscillations
remain unchanged, and constant siblings have zero oscillation. At the new
root, using normalized averaging on the distinguished child,
\begin{align*}
 D_{\emptyset}(b^{(r+1)})
 =\frac1q\E|b^{(r)}-\mu_{r+1}|
        +\frac{q-1}{q}|\kappa_r-\mu_{r+1}|
 \le \frac{D_{\emptyset}(b^{(r)})+\delta_r}{q}
  +\frac{\delta_r}{q}
\le \frac{1+2\delta_r}{q}
\le 1.
\end{align*}
Here the expectation is with respect to normalized measure on the retained
child. Thus \(b^{(r+1)}\) is admissible whenever \(b^{(r)}\) is admissible. At each step, the magnitude of the mean decreases by
\(\min\{|\mu_r|,(q-1)/2\}\). Hence the construction reaches mean zero after
$L=\left\lceil\frac{2|\mu_0|}{q-1}\right\rceil$
steps, with \(L=0\) if \(\mu_0=0\).
Heredity ensures that the functional
value does not decrease during these extensions. Since
$L\le \left\lceil\frac{qm}{q-1}\right\rceil$,
constant refinement, if needed, gives a centered vector at depth
\(m+\lceil qm/(q-1)\rceil\). This proves \eqref{eq:center-depth}.
The centered vectors are a subclass of all admissible vectors, so
\eqref{eq:center-sup} follows. If \(L\ge1\), then at the last extension \(\mu_{r+1}=0\). The retained
child has mean \(\mu_r\), and the other children have mean
\(\kappa_r=-\mu_r/(q-1)\). This proves \eqref{eq:polarization}.
\end{proof}
\begin{cor}[General centering with finite anchors]\label{cor:general-centering}
Suppose the family $\{\scN_m\}$ is hereditary and satisfies finite anchoring at
depth \(m\), \textit{i.e.}, $\scN_m$ has finite anchoring with linear forms \(\ell_1,\ldots,\ell_N\). Set
\[
 R_j:=\max\bigl\{|A_\emptyset(b)|:
              b\in\Sigma_{m,q},\ \ell_j(b)=0\bigr\},
 \qquad R(m):=\max_{1\le j\le N}R_j.
\]
Then every value attained by an admissible depth-\(m\) vector is bounded
above by a value attained by a centered admissible vector at some
depth at most \(m+\lceil 2R(m)/(q-1)\rceil\). If finite anchoring
holds at every depth, then
\[
 \sup_m c_{\scN,m}=\sup_m\bar c_{\scN,m}.
\]

\end{cor}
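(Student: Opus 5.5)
The plan is to repeat the proof of \cref{thm:stable-centering}, replacing the zero-leaf anchor by the finite anchors $\ell_1,\ldots,\ell_N$. The only place where the zero leaf was used was to bound the initial root mean via \cref{lem:edge}. Here the quantity $R(m)$ takes over that role, so the construction should be unchanged once we have an anchored starting vector with $|\mu_0|\le R(m)$.

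\emph{Step 1: well-definedness of $R(m)$.} By \cref{lem:compact}, each section $P_{\ell_j}=\Sigma_{m,q}\cap\ker\ell_j$ is compact and nonempty, since it contains $0$. The map $b\mapsto|A_\emptyset(b)|$ is continuous, so $R_j$ is a finite maximum and $R(m)<\infty$.

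\emph{Step 2: anchoring.} Fix $a\in\Sigma_{m,q}$. By \eqref{eq:anchors} there is an index $j$ such that
\[
b^{(0)}:=a-\frac{\ell_j(a)}{\ell_j(\one)}\one
\]
satisfies $\scN_m(b^{(0)})\ge\scN_m(a)$; indeed $\scN_m(b^{(0)})$ is the supremum of $\scN_m$ over the translation line through $a$. Translations preserve $F_{m,q}$, so $b^{(0)}\in\Sigma_{m,q}$. Moreover $\ell_j(b^{(0)})=0$, which gives $|\mu_0|=|A_\emptyset(b^{(0)})|\le R_j\le R(m)$.

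\emph{Step 3: recentering.} Run the recursion from the proof of \cref{thm:stable-centering} verbatim, using $\delta_r$, $\mu_{r+1}$, $\kappa_r$, and a new root with one distinguished child carrying $b^{(r)}$ and $q-1$ constant siblings equal to $\kappa_r$. The root estimate $D_\emptyset(b^{(r+1)})\le(1+2\delta_r)/q\le1$ used only admissibility of $b^{(r)}$ and $\delta_r\le(q-1)/2$, so it still holds. The magnitude of the mean drops by $\min\{|\mu_r|,(q-1)/2\}$ at each step, so the mean reaches $0$ after
\[
L=\left\lceil\frac{2|\mu_0|}{q-1}\right\rceil\le\left\lceil\frac{2R(m)}{q-1}\right\rceil
\]
extensions. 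Heredity is applied at each extension, because $b^{(r)}$ occurs unchanged as a descendant subtree of $b^{(r+1)}$. This gives
\[
\scN_{m+L}(\widetilde a)\ge\scN_m(b^{(0)})\ge\scN_m(a).
\]
If a common depth is wanted, constant refinement (heredity again) raises the depth to $m+\lceil 2R(m)/(q-1)\rceil$.

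\emph{Step 4: equality of suprema.} The inequality $\bar c_{\scN,m}\le c_{\scN,m}$ is trivial, since centered vectors form a subclass. Conversely, Step 3 gives $c_{\scN,m}\le\sup_k\bar c_{\scN,k}$ for every $m$, using only finite anchoring at that depth $m$. Taking the supremum over $m$ yields $\sup_m c_{\scN,m}=\sup_m\bar c_{\scN,m}$.

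The main point needing care is Step 2: the optimizing translate must actually land on the anchor hyperplane $\ker\ell_j$, so that the bound $R_j$ applies. This is exactly what \eqref{eq:anchors} provides, because the maximum on the right-hand side is attained at one of the listed anchored translates. If $\scN_m(a)$ is infinite, the same argument applies with values in $[0,\infty]$.
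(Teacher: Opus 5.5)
Your proposal is correct and follows the paper's proof essentially step for step: translate onto an anchor hyperplane $\ker\ell_j$ via \eqref{eq:anchors}, bound the root mean by $R(m)$ using \cref{lem:compact}, and rerun the extension construction of \cref{thm:stable-centering} with heredity. One small imprecision is that constant refinement does not follow from heredity; it preserves the value exactly because the refined vector represents the same function $f_a$, and in any case it is not needed for the stated conclusion.
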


\begin{proof}
Each \(R_j\) is finite by \cref{lem:compact}. Given
\(a\in\Sigma_{m,q}\), finite anchoring provides an index \(j\) for
which
\[
 b:=a-\frac{\ell_j(a)}{\ell_j(\one)}\one
 \quad\text{satisfies}\quad
 \ell_j(b)=0,\qquad \scN_m(b)\ge\scN_m(a).
\]
Translation leaves \(F_{m,q}\) unchanged, so \(b\) is admissible.
By the definition of \(R(m)\), its root mean has absolute value at
most \(R(m)\). Apply the extension construction in the proof of
\cref{thm:stable-centering} to \(b\). That construction reaches mean
zero after at most \(\lceil 2R(m)/(q-1)\rceil\) added generations,
while preserving admissibility and the lower bound on the
functional. If finite anchoring holds at every depth, take
supremum over \(m\) to obtain the stated equality.
\end{proof}
\begin{remark}[What centering preserves]\label{rem:centering-limits}
The old vector remains unchanged inside a descendant subtree. The new root
has mean zero, but the retained subtree need not be centered. In
\eqref{eq:polarization}, only the child \emph{averages} have opposite signs;
the distinguished child can contain leaves of both signs. Consequently, an
estimate just at the new root does not bound a hereditary maximum over all
descendants. An induction must control the class of subtrees that actually
occurs. The binary example in \cref{thm:binary-counterexample} makes this
distinction explicit.
\end{remark}

\begin{remark}[\(\operatorname{L}^p\)-BMO]
For \(1\le p<\infty\), one may replace \(\osc_I(f)\) throughout the
finite-tree setup by
\[
 \osc_{p,I}(f)
 :=\left(\avg{|f-f_I|^p}_I\right)^{1/p}.
\]
The extension, approximation, and finite-depth reduction arguments
apply with this seminorm. Finite anchoring still removes the
constant direction, and the centering construction works with a
step size depending on \(p\) and \(q\). Indeed, suppose a retained child carries a function with
mean \(\mu\) and \(\operatorname{L}^p\)-BMO seminorm at most
\(1\). To give its new parent \(P\) mean \(\mu'\), fill the
other children constantly with
 $\kappa=\frac{q\mu'-\mu}{q-1}$.
Writing \(\delta=|\mu-\mu'|\), Minkowski's inequality gives
\[
 \osc_{p,P}(\widetilde f)
 \le q^{-1/p}
 \left(1+\delta\bigl[1+(q-1)^{1-p}\bigr]^{1/p}\right).
\]
Thus the new parent remains admissible whenever
\[
 \delta\le
 \frac{q^{1/p}-1}{\bigl[1+(q-1)^{1-p}\bigr]^{1/p}}.
\]
Moving the mean toward zero by steps of at most this size
proves the centering assertion. Thus the structural parts
of the formalism have \(\operatorname{L}^p\) analogues.

The rational-polytope conclusion requires separate attention.
For \(p>1\), the constraint \(F_{m,q}^{(p)}(a)\le1\) is generally
not polyhedral, so rational piecewise-linearity of an output
functional does not by itself imply a rational extremal value.
The applications below use \(p=1\), where the sharp constants
and the distinction between node and leaf anchoring are central.
\end{remark}

\section{Jensen-defect Carleson functionals}\label{sec:jensen}
For a convex function \(\Phi\), the Jensen gap measures the
difference between the average of \(\Phi(f)\) and its value
at the average of \(f\). On a tree, this gap decomposes into
nonnegative one-step defects associated with successive
generations. We show that their weighted descendant sums
recover the gap exactly. These sums have the form of a
Carleson packing condition, whose classical background is
\cite{Carleson1962}; the dyadic descendant-sum condition
appears explicitly in
\cite[Theorem~4.5]{HytonenPerez2013Sharp}.
The resulting functional illustrates node anchoring.
For \(\Phi(t)=|t|\), its defects measure cancellation between
child averages and connect this example to the
absolute-value map studied in the next section.

Fix $B\in\cT$ and a convex Lipschitz function
$\Phi:\R\to\R$. For $f\in \operatorname{L^1}(B)$ and $P\in\cT(B)$, define the
\emph{Jensen gap} on $P$ by
\[
J_P^\Phi(f)
:=
\frac1{\mu(P)}\int_P\Phi(f)\,d\mu-\Phi(f_P)\ge0.
\]
Set
\[
\scC_{\Phi,B}(f):=\sup_{P\in\cT(B)}J_P^\Phi(f),
\qquad
\scC_\Phi(f):=\sup_{B\in\cT}\scC_{\Phi,B}(f).
\]

For a node $I$ with children $I_1,\ldots,I_q$, define its
\emph{one-step Jensen defect} by
\begin{equation}\label{eq:lambdaI}
\lambda_I^\Phi(f)
:=
\frac1q\sum_{j=1}^q\Phi(f_{I_j})-\Phi(f_I)\ge0.
\end{equation}
The inequality follows from
$f_I=q^{-1}\sum_{j=1}^q f_{I_j}$ and the convexity of $\Phi$.


For a nonnegative sequence $\lambda=\{\lambda_I\}$ on the
descendants of $B$, define its local Carleson norm by
\[
\|\lambda\|_{\Carl(B)}
:=
\sup_{P\in\cT(B)}
\frac1{\mu(P)}
\sum_{I\in\cT(P)}\lambda_I\mu(I).
\]
The sum includes $P$ and is the increasing limit of its finite
generation sums. We say that $\lambda$ is a Carleson sequence if its local Carleson norm is finite. 

\subsection{An exact packing identity}\label{subsec:telescope}

\begin{prop}[Exact Jensen-defect packing]\label{thm:packing}
For every $f\in \operatorname{L^1}(B)$ and $P\in\cT(B)$,
\begin{equation}\label{eq:packing}
\sum_{I\in\cT(P)}\lambda_I^\Phi(f)\mu(I)
=
\int_P\Phi(f)\,d\mu-\mu(P)\Phi(f_P)
=
\mu(P)J_P^\Phi(f).
\end{equation}
Consequently,
\begin{equation}\label{eq:C-gap}
\scC_{\Phi,B}(f)
=
\|\lambda^\Phi(f)\|_{\Carl(B)},
\end{equation}
and if $\Phi$ is $L$-Lipschitz, then
\[
\scC_{\Phi,B}(f)\le L\|f\|_{*,B}.
\]
\end{prop}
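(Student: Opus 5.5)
The plan is to prove the packing identity \eqref{eq:packing} by telescoping over finitely many generations and then passing to the limit with the differentiation property \eqref{Emf converges to f}. Fix $P\in\cT(B)$ and $n\ge1$. For each node $I\in\cT(P)$ of relative depth $k<n$, the definition \eqref{eq:lambdaI} and $\mu(I_j)=\mu(I)/q$ give
\[
\lambda_I^\Phi(f)\mu(I)=\sum_{J\in\cT_1(I)}\Phi(f_J)\mu(J)-\Phi(f_I)\mu(I).
\]
Summing over all $I$ of relative depths $0,\dots,n-1$, the intermediate terms cancel, leaving
\[
\sum_{k=0}^{n-1}\sum_{I\in\cT_k(P)}\lambda_I^\Phi(f)\mu(I)
=\int_P\Phi(E_n^Pf)\,d\mu-\mu(P)\Phi(f_P),
\]
since $\sum_{J\in\cT_n(P)}\Phi(f_J)\mu(J)=\int_P\Phi(E_n^Pf)\,d\mu$.

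The next step is the limit $n\to\infty$. The left side is a nondecreasing sequence of nonnegative partial sums, and by the convention stated with the Carleson norm it converges to the full sum. On the right, $\Phi$ is $L$-Lipschitz, so
\[
\Bigl|\int_P\Phi(E_n^Pf)\,d\mu-\int_P\Phi(f)\,d\mu\Bigr|\le L\|E_n^Pf-f\|_{\operatorname{L^1}(P)}\to0
\]
by the $\operatorname{L}^1$ convergence in \eqref{Emf converges to f}. Note also that $\Phi(f)\in\operatorname{L}^1(P)$ because $|\Phi(t)|\le|\Phi(0)|+L|t|$. This yields \eqref{eq:packing}, and dividing by $\mu(P)$ gives the equality with $J_P^\Phi(f)$. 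The only delicate point is this limit passage, which is where the Lipschitz hypothesis is really used: without it one would need monotone or Fatou-type arguments, and the identity could degenerate to $\infty=\infty$. Here the hypothesis makes that step routine.

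Taking the supremum over $P\in\cT(B)$ of the normalized identity gives \eqref{eq:C-gap} directly from the definitions of $\scC_{\Phi,B}$ and $\|\cdot\|_{\Carl(B)}$. For the final bound, write
\[
J_P^\Phi(f)=\frac1{\mu(P)}\int_P\bigl(\Phi(f)-\Phi(f_P)\bigr)\,d\mu\le\frac{L}{\mu(P)}\int_P|f-f_P|\,d\mu=L\osc_P(f)\le L\|f\|_{*,B},
\]
and then take the supremum over $P$.
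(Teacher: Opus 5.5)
Your proposal is correct and matches the paper's proof essentially step for step. Both arguments telescope the one-step defects over the first $N$ generations to get $\int_P\Phi(E_N^Pf)\,d\mu-\mu(P)\Phi(f_P)$, pass to the limit using the $\operatorname{L}^1(P)$ convergence of $E_N^Pf$ together with the Lipschitz bound, and rely on nonnegativity of the defects for monotone partial sums. Both then take suprema over $P$ and bound $J_P^\Phi(f)$ by $L\,\osc_P(f)$.
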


\begin{proof}
Since $\mu(I_j)=\mu(I)/q$, multiplication of the one-step defect
by $\mu(I)$ gives
\[
\mu(I)\lambda_I^\Phi(f)
=
\sum_{j=1}^q\mu(I_j)\Phi(f_{I_j})
-\mu(I)\Phi(f_I).
\]
On summing over the first $N$ generations below $P$, every term
at an intermediate generation cancels. Hence, writing $\operatorname{depth}_P(I)$ for the depth of $I$ in $\cT(P)$,  
\begin{equation}\label{eq:finite-sum-in-Carleson-id}
\sum_{\substack{I\in\cT(P)\\
                \operatorname{depth}_P(I)<N}}
\lambda_I^\Phi(f)\mu(I)
=
\sum_{J\in\cT_N(P)}\mu(J)\Phi(f_J)
-\mu(P)\Phi(f_P)
=
\int_P\Phi(E_N^P f)\,d\mu-\mu(P)\Phi(f_P).
\end{equation}
The finite-depth averages $E_N^P f$ converge to $f$ in $\operatorname{L^1}(P)$, since $\cT(P)$ differentiates $\operatorname{L^1}(P)$.
Since $\Phi$ is $L$-Lipschitz,
\[
\int_P|\Phi(E_N^P f)-\Phi(f)|\,d\mu
\le L\int_P|E_N^P f-f|\,d\mu\longrightarrow0.
\]
All one-step defects are nonnegative, so the finite sums \eqref{eq:finite-sum-in-Carleson-id} increase
to the sum in \eqref{eq:packing}. Dividing that identity by
$\mu(P)$ and taking the supremum over $P$ proves
\eqref{eq:C-gap}. Finally,
\[
J_P^\Phi(f)
=
\frac1{\mu(P)}
\int_P\bigl(\Phi(f)-\Phi(f_P)\bigr)\,d\mu
\le L\frac1{\mu(P)}\int_P|f-f_P|\,d\mu.
\]
Taking the supremum over $P$ proves the final inequality.
\end{proof}

Thus the Carleson estimate follows from an identity: the total
one-step defect below $P$ equals the Jensen gap on $P$. At finite
depth, the functional takes the form
\begin{equation}\label{eq:C-finite}
\scC_{\Phi,m}(a)
=
\max_\sigma
\left(
\frac1{n_\sigma}\sum_{\omega\in I_\sigma}\Phi(a_\omega)
-\Phi(A_\sigma(a))
\right).
\end{equation}
This formula shows that finite-depth tree transport preserves the
functional and that embedding a vector as a descendant subtree
cannot decrease its value.

The finite-depth approximations satisfy
\[
\scC_{\Phi,B}(E_m^B f)\longrightarrow\scC_{\Phi,B}(f).
\]
Indeed, if $P$ lies above depth $m$, then
$(E_m^Bf)_P=f_P$, and Jensen's inequality on the depth-$m$
partition of $P$ gives
$J_P^\Phi(E_m^Bf)\le J_P^\Phi(f)$. If $P$ lies at or below
depth $m$, then $E_m^Bf$ is constant on $P$ and its gap there is
zero. This proves
$\scC_{\Phi,B}(E_m^Bf)\le\scC_{\Phi,B}(f)$.
For each fixed $P$, the $\operatorname{L^1}(P)$ convergence and the Lipschitz
property of $\Phi$ give
$J_P^\Phi(E_m^Bf)\to J_P^\Phi(f)$. Choosing a fixed $P$
arbitrarily close to the supremum proves the reverse limit
inequality.

These statements hold for every convex Lipschitz $\Phi$.
To apply the homogeneous reduction in \cref{thm:reduction}, the
functional must in addition be positively homogeneous. This holds,
for example, when $\Phi$ is positively homogeneous, in which case
$\Phi(t)=a t+\kappa|t|$ for some $a\in\R$ and $\kappa\ge0$ as we will see below. The affine terms
cancel from every Jensen gap, leaving
$\scC_{\Phi,B}=\kappa\scC_{|\cdot|,B}$.

\subsection{Homogeneous defects and node anchoring}
For a measurable set $P$ with $0<\mu(P)<\infty$, we may write
\[
\avg{f}_P:=f_P=\frac{1}{\mu(P)}\int_P f\,d\mu
\]
for the average of $f$ over $P$.

A convex positively homogeneous function on $\R$ has the form
\begin{equation}\label{eq:Phi-classification}
 \Phi(t)=a_\Phi t+\kappa_\Phi|t|,
 \quad a_\Phi=\frac{\Phi(1)-\Phi(-1)}2,
 \quad \kappa_\Phi=\frac{\Phi(1)+\Phi(-1)}2\ge0.
\end{equation}
This follows by homogeneity on the positive and negative half-lines, and convexity at zero implies that $0=\Phi(0)\le\kappa_\Phi$.
Since affine terms cancel, $\scC_\Phi=\kappa_\Phi\scC_{|\cdot|}$.
Write $\scC_B=\scC_{|\cdot|,B}$ and $\scC_m=\scC_{|\cdot|,m}$.

\begin{thm}[Optimized packing and exact node anchoring]\label{thm:node-anchor}
For every $f\in\BMO_\cT(B)$,
\begin{equation}\label{eq:optimized-C}
 \sup_{c\in\R}\scC_B(f+c)=\|f\|_{\ast,B}.
\end{equation}
At finite depth,
\begin{equation}\label{eq:node-anchor}
 \sup_{c\in\R}\scC_m(a+c\one)
 =\max_\sigma\scC_m(a-A_\sigma(a)\one)=F_{m,q}(a).
\end{equation}
For a homogeneous $\Phi$ as in \eqref{eq:Phi-classification},
\[
 c_{\scC_\Phi,m}=\bar c_{\scC_\Phi,m}=c_{\scC_\Phi}=\kappa_\Phi
 \quad(m\ge1),
\]
and its finite-depth value spectrum is $[0,\kappa_\Phi]$.
\end{thm}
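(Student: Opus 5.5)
The plan rests on a single pointwise-in-$P$ observation. For $\Phi(t)=|t|$ and any node $P$, we have
\[
J_P(f+c)=\avg{|f+c|}_P-|f_P+c|.
\]
Choosing $c=-f_P$ gives exactly $J_P(f-f_P)=\osc_P(f)$.

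\textbf{Upper bound in \eqref{eq:optimized-C}.} For the inequality $\le$, note that $J_P(g)\le\avg{|g-g_P|}_P$ for every $g$. This is the $L=1$ case of \cref{thm:packing}, which follows from $|g|-|g_P|\le|g-g_P|$. Translation does not change oscillation, so
\[
\scC_B(f+c)\le\sup_P\osc_P(f+c)=\|f\|_{\ast,B}
\]
for every $c$.

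\textbf{Lower bound in \eqref{eq:optimized-C}.} For $\ge$, fix $P$ and take $c=-f_P$. Then $\scC_B(f-f_P)\ge J_P(f-f_P)=\osc_P(f)$. Taking the supremum over $P$ completes \eqref{eq:optimized-C}.

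\textbf{Finite depth \eqref{eq:node-anchor}.} The same argument runs through the formula \eqref{eq:C-finite}. The upper bound $\scC_m(a+c\one)\le F_{m,q}(a)$ holds for all $c$. Meanwhile $\scC_m(a-A_\sigma(a)\one)\ge D_\sigma(a)$ for each prefix $\sigma$. Hence
\[
\sup_c\scC_m(a+c\one)\ge\max_\sigma\scC_m(a-A_\sigma(a)\one)\ge F_{m,q}(a)\ge\sup_c\scC_m(a+c\one),
\]
and all terms are equal. This exhibits finite anchoring with the node forms $\ell_\sigma(a)=A_\sigma(a)$, which satisfy $\ell_\sigma(\one)=1\neq0$.

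\textbf{Constants.} For the constants, first reduce to $\kappa_\Phi$ times the $|\cdot|$ case, using $\scC_\Phi=\kappa_\Phi\scC_{|\cdot|}$ from \eqref{eq:Phi-classification}. The upper bound $c_{\scC_\Phi,m}\le c_{\scC_\Phi}\le\kappa_\Phi$ follows from the Lipschitz bound in \cref{thm:packing} applied to $\kappa_\Phi|\cdot|$.

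The lower bound must hold already for centered depth-$1$ vectors, and this is the only step needing care. A generic maximizer from the anchoring is merely some translate, not necessarily centered. For the centered constant, I would therefore exhibit a single explicit example: take $a\in V_{1,q}$ with leaf values $1/2$ on one leaf, $-1/2$ on another, and $0$ on the rest, scaled so that $D_\emptyset(a)=1$. Concretely, use $a=(q/2,-q/2,0,\dots,0)$, which gives $D_\emptyset=(q/2+q/2)/q=1$. Its root average is $0$, so
\[
\scC_1(a)=\avg{|a|}-|0|=D_\emptyset(a)=1.
\]
The root is the only node of positive oscillation, so $a\in\overline\Sigma_{1,q}$. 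By monotonicity in $m$, embedding via constant refinement keeps the vector centered. This yields
\[
\kappa_\Phi\le\bar c_{\scC_\Phi,m}\le c_{\scC_\Phi,m}\le c_{\scC_\Phi}\le\kappa_\Phi.
\]
The global equality can also be seen from \cref{thm:reduction}: condition \eqref{eq:LSC} is the convergence $\scC_B(E_mf)\to\scC_B(f)$ shown earlier.

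\textbf{Value spectrum.} Finally, the spectrum $[0,\kappa_\Phi]$ follows from \cref{prop:spectrum}. Continuity and positive homogeneity are immediate from \eqref{eq:C-finite}, and finite anchoring was just established.
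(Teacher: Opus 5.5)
Your proposal is correct and follows the paper's proof essentially step by step. It uses the same pointwise bound $\avg{|f+c|}_P-|f_P+c|\le\osc_P(f)$ with equality at $c=-f_P$, the same sandwich over node anchors at finite depth, and a centered depth-one extremizer refined constantly. The differences are cosmetic: you take the extremizer $(q/2,-q/2,0,\ldots,0)$ instead of the paper's normalized $(q-1,-1,\ldots,-1)$, and you close the global constant with the direct chain $\kappa_\Phi\le\bar c_{\scC_\Phi,m}\le c_{\scC_\Phi,m}\le c_{\scC_\Phi}\le\kappa_\Phi$ rather than citing the reduction theorem.
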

\begin{proof}
For every descendant $P$ of $B$ and $c\in\R$,
\[
 \avg{|f+c|}_P-|f_P+c|\le\avg{|f-f_P|}_P\le\|f\|_{\ast,B}.
\]
Choosing $c=-f_P$ gives equality on the first inequality. Taking supremum over $c$ and
$P$ proves \eqref{eq:optimized-C}. At finite depth, write
\[
 J_\sigma(a;c)
 :=\frac1{n_\sigma}\sum_{\omega\in I_\sigma}|a_\omega+c|
       -|A_\sigma(a)+c|.
\]
The preceding inequality, applied at \(I_\sigma\), says that
\(J_\sigma(a;c)\le D_\sigma(a)\) for every \(c\). Choosing
\(c=-A_\sigma(a)\) gives \(J_\sigma(a;c)=D_\sigma(a)\).
Since there are finitely many nodes,
\[
 \sup_{c\in\R}\scC_m(a+c\one)
 =\sup_{c\in\R}\max_\sigma J_\sigma(a;c)
 =\max_\sigma D_\sigma(a)
 =F_{m,q}(a).
\]
For every node \(\sigma\),
\(\scC_m(a-A_\sigma(a)\one)\le F_{m,q}(a)\).
If \(\sigma\) maximizes \(D_\sigma(a)\), its contribution to
\(\scC_m(a-A_\sigma(a)\one)\) equals \(F_{m,q}(a)\). Hence
 $\max_\sigma\scC_m(a-A_\sigma(a)\one)=F_{m,q}(a)$,
showing \eqref{eq:node-anchor}. The finitely many linear forms
\(\ell_\sigma(a):=A_\sigma(a)\) satisfy
\(\ell_\sigma(\one)=1\), so \eqref{eq:node-anchor} gives finite
anchoring in the sense of \cref{def:anchors}.

For sharpness, put $(q-1,-1,\ldots,-1)$ on the children of a root. Its mean
is zero, and both its BMO seminorm and its absolute-value Jensen gap $\scC_B$ are
$2(q-1)/q$. We normalize and refine constantly to obtain a centered extremizer
at every depth. More precisely, for any \(m\ge1\), multiply the \(q\) values by
\(\frac{q}{2(q-1)}\), and assign each resulting value to every
depth-\(m\) leaf below its corresponding root child. The resulting
vector has root average zero and satisfies
\(F_{m,q}(a)=\scC_m(a)=1\). Since
\(\scC_\Phi=\kappa_\Phi\scC_{|\cdot|}\), the finite-depth sharp
constant $c_{\scC_\Phi,m}$ is \(\kappa_\Phi\). The global equality follows from
\cref{thm:reduction}, and positive homogeneity gives the stated
value spectrum by \cref{prop:spectrum}.
\end{proof}
We call \eqref{eq:node-anchor} \emph{node anchoring} because the
translation that maximizes \(\scC_m\) can be chosen to make the
average on some node \(\sigma\) equal to zero. The node need not be a leaf. For example, the depth-one
vector $a=(q-1,-1,\ldots,-1)$
has root average zero and Jensen gap \(2(q-1)/q>0\).
Subtracting either of its two distinct leaf values makes
all entries nonnegative or all entries nonpositive, so
every Jensen gap becomes zero. Repeating each value on
its descendants gives the same example at every positive
depth. Thus \(\scC_m\) has node anchoring but does not have
zero-leaf anchoring. \cref{cor:general-centering} applies through its more
general finite-anchoring hypothesis.

For these node anchors, \cref{cor:general-centering} has an explicit
mean bound. If the selected node \(\sigma\) has depth \(r\), then
any admissible vector \(b\) with \(A_\sigma(b)=0\) satisfies
\[
 |A_\emptyset(b)|
 =|A_\emptyset(b)-A_\sigma(b)|
 \le \sum_{k=0}^{r-1}
       |A_{\sigma|_k}(b)-A_{\sigma|_{k+1}}(b)|
 \le \frac{qr}{2},
\]
where \(\sigma|_k\) is the ancestor of \(\sigma\) at depth \(k\).
The last inequality follows from \eqref{eq:edge-average-bound}.
Thus the constant \(R_j\) in \cref{cor:general-centering} is at most
\(qr/2\) when \(\ell_j=A_\sigma\).

\subsection{The sharp constant for general convex Lipschitz functions}
In this subsection, \(\Phi:\mathbb R\to\mathbb R\) is convex and
Lipschitz. No homogeneity assumption is imposed.
The Lipschitz constant in \cref{thm:packing} can be reduced by removing the
best affine slope. Let
\[
 s_-:=\lim_{t\to-\infty}\Phi'_+(t),\qquad
 s_+:=\lim_{t\to+\infty}\Phi'_+(t),\qquad
 \kappa(\Phi):=\frac{s_+-s_-}{2},
\]
where $\Phi'_+$ is the right derivative of $\Phi$. Convexity makes the one-sided derivative nondecreasing, and Lipschitz
continuity makes both limits finite.
\begin{thm}[Sharp slope-variation constant]\label{thm:slope-variation}
For every convex Lipschitz function \(\Phi\), every
\(B\in\cT\), and every \(f\in\BMO_\cT(B)\),
\begin{equation}\label{eq:sharp-general-packing}
 \scC_{\Phi,B}(f)\le\kappa(\Phi)\|f\|_{\ast,B}.
\end{equation}
The constant \(\kappa(\Phi)=(s_+-s_-)/2\) is optimal on every
homogeneous \(q\)-ary tree. Sharpness is obtained in the limit by
functions that are constant on the children of \(B\).
\end{thm}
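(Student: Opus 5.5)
The argument has two parts: the upper bound \eqref{eq:sharp-general-packing}, obtained by subtracting the best affine part of $\Phi$, and sharpness, obtained from two-valued step functions on the children of $B$ pushed out to $\pm\infty$.

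\textbf{Upper bound.} Jensen gaps are unchanged when an affine function is added to $\Phi$. Put $s_0:=(s_++s_-)/2$ and $\Psi(t):=\Phi(t)-s_0t$. Then $\Psi$ is convex, and its right derivative is nondecreasing with limits $s_\pm-s_0=\pm\kappa(\Phi)$. Hence $|\Psi'_+|\le\kappa(\Phi)$ everywhere, so $\Psi$ is $\kappa(\Phi)$-Lipschitz. We have $J_P^\Phi(f)=J_P^\Psi(f)$ for every $P$. The final inequality of \cref{thm:packing}, applied to $\Psi$, then gives $\scC_{\Phi,B}(f)=\scC_{\Psi,B}(f)\le\kappa(\Phi)\|f\|_{\ast,B}$.

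\textbf{Sharpness.} Use the depth-one vector $a=(q-1,-1,\ldots,-1)$ placed on the children of $B$. It has mean zero and $\|f_a\|_{\ast,B}=2(q-1)/q$. Translate and dilate it: for $c\in\R$ and $\lambda>0$, let $g=c+\lambda f_a$. Its norm is $\lambda\cdot 2(q-1)/q$. The only node with a nonzero gap is $B$ itself, where
\[
 J_B^\Phi(g)=\frac1q\Phi(c+\lambda(q-1))+\frac{q-1}q\Phi(c-\lambda)-\Phi(c).
\]
Rewrite each difference as an integral of $\Phi'_+$:
\[
 J_B^\Phi(g)=\frac1q\int_c^{c+\lambda(q-1)}\Phi'_+\,dt-\frac{q-1}q\int_{c-\lambda}^{c}\Phi'_+\,dt .
\]
Now take $c=0$ and let $\lambda\to\infty$. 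Since $\Phi'_+$ is bounded and monotone with limits $s_\pm$, the averaged integrals converge to their limiting slopes:
\[
 \frac1{\lambda(q-1)}\int_0^{\lambda(q-1)}\Phi'_+\,dt\to s_+,\qquad \frac1\lambda\int_{-\lambda}^0\Phi'_+\,dt\to s_-.
\]
Therefore
\[
 J_B^\Phi(g)/\lambda\to \tfrac{q-1}{q}(s_+-s_-),
\]
while $\|g\|_{\ast,B}/\lambda=2(q-1)/q$. The ratio tends to $(s_+-s_-)/2=\kappa(\Phi)$. The sup of the ratio is therefore $\kappa(\Phi)$, approached but in general not attained, as the statement asserts. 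Such $g$ is constant on the children of $B$, and the computation holds for every $q\ge2$.

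\textbf{Main obstacle.} The one delicate point is the limiting step: the approximation must avoid the non-asymptotic behaviour of $\Phi$ near the anchor. Scaling $\lambda\to\infty$ with the anchor fixed at $c=0$ handles this. The monotone convergence of $\Phi'_+$ to $s_\pm$ makes the Cesàro-type averages converge, and no homogeneity of $\Phi$ is needed.
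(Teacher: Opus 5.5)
Your proposal is correct and follows essentially the same route as the paper. The upper bound comes from subtracting the affine part $\frac{s_++s_-}{2}t$ and applying the Lipschitz bound of \cref{thm:packing}. Sharpness comes from the depth-one family $\lambda(q-1,-1,\ldots,-1)$ on the children of $B$, letting $\lambda\to\infty$. The only cosmetic difference is that you pass to the limit through Ces\`aro averages of $\Phi'_+$, whereas the paper uses $\Phi(x)/x\to s_\pm$ directly.
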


\begin{proof}
Set
\[
\Psi(t)=\Phi(t)-\frac{s_++s_-}{2}t.
\]
The slopes of \(\Psi\) lie between \(-\kappa(\Phi)\) and
\(\kappa(\Phi)\), so \(\Psi\) is \(\kappa(\Phi)\)-Lipschitz.
Subtracting an affine function leaves every Jensen defect unchanged.
The bound in \cref{thm:packing}, applied to \(\Psi\), gives
\eqref{eq:sharp-general-packing}.

For sharpness, let \(f_R\) equal \(R(q-1)\) on one child of \(B\)
and \(-R\) on each of the other \(q-1\) children, where \(R>0\).
Then \((f_R)_B=0\), all oscillations below the first generation
vanish, and
$\|f_R\|_{\ast,B}=2R(q-1)/q$. Its Jensen gap at \(B\) is
\[
\avg{\Phi(f_R)}_B-\Phi(0)
=
\frac{\Phi(R(q-1))+(q-1)\Phi(-R)}q-\Phi(0).
\]
As \(R\to\infty\), the limiting slopes give
\[
\frac{\Phi(R(q-1))}{R(q-1)}\longrightarrow s_+,
\qquad
\frac{\Phi(-R)}{R}\longrightarrow -s_-.
\]
Consequently,
\[
\frac{\avg{\Phi(f_R)}_B-\Phi(0)}
     {\|f_R\|_{\ast,B}}
\longrightarrow \frac{s_+-s_-}{2}
=\kappa(\Phi).
\]
Since \(\scC_{\Phi,B}(f_R)\) is at least its Jensen gap at \(B\),
no smaller constant can hold in \eqref{eq:sharp-general-packing}.
\end{proof}
For nonhomogeneous $\Phi$, this is an optimal inequality over all amplitudes, \ie all scalar multiples $\lambda f$, with $\lambda>0$;
the sharp constant cannot be deduced from the case $\lambda=1$ by
scaling. The optimum need not be attained.
For example, $\Phi(t)=\sqrt{1+t^2}$ has $\kappa(\Phi)=1$, but $|\Phi'(t)|<1$ for every finite $t$. Thus, for each $x_j\ne\bar x$, the mean value theorem gives
$|\Phi(x_j)-\Phi(\bar x)|<|x_j-\bar x|$.
Consequently, for every nonconstant finite vector
$(x_1,\ldots,x_q)$,
$0\le
\frac1q\sum_{j=1}^q\Phi(x_j)-\Phi(\bar x)
<
\frac1q\sum_{j=1}^q|x_j-\bar x|$.
In other words, the Jensen gap is strictly smaller than the mean
oscillation at every node where the mean oscillation is positive.

There is a useful representation of this result in terms of threshold
crossings. The right derivative \(\Phi'_+\) is nondecreasing and
right-continuous. Define the nonnegative Lebesgue--Stieltjes measure $\nu$ by
$\nu((a,b])=\Phi'_+(b)-\Phi'_+(a)$. Taking $a\to-\infty$ and $b\to+\infty$ gives
$\nu(\mathbb R)=s_+-s_-$. We claim that
\begin{equation}\label{eq:threshold-representation}
 \Phi(x)=\Phi(0)+\frac{s_++s_-}{2}x
       +\frac12\int_\R\bigl(|x-t|-|t|\bigr)\,\nu(dt).
\end{equation}
To see this, denote the right-hand side by \(G(x)\). The integral is finite since $\bigl||x-t|-|t|\bigr|\le |x|$ and
$\nu(\mathbb R)<\infty$. The right derivative of \(|x-t|\) with respect to \(x\) is \(1\) when
\(t\le x\) and \(-1\) when \(t>x\). Hence
\begin{align*}
G'_+(x)
=\frac{s_++s_-}{2}
  +\frac12\left(\nu((-\infty,x])-\nu((x,\infty))\right)
&=\frac{s_++s_-}{2}
  +\nu((-\infty,x])-\frac{\nu(\mathbb R)}{2}
\\&=s_-+\nu((-\infty,x])
=\Phi'_+(x).
\end{align*}
Both \(G\) and \(\Phi\) are convex, and \(G'_+(x)=\Phi'_+(x)\)
for every \(x\in\mathbb R\). So, they differ by a constant. Since $G(0)=\Phi(0)$,
that constant is zero, proving \eqref{eq:threshold-representation}. Indeed, the affine terms in
\eqref{eq:threshold-representation} cancel in the Jensen defect, and
therefore Applying the representation to the child averages and to their parent average gives
\begin{equation}\label{eq:threshold-defects}
\lambda_I^\Phi(f)
=\frac12\int_{\mathbb R}
       \lambda_I^{|\cdot-t|}(f)\,\nu(dt).
\end{equation}
All integrals here are finite: for every node \(J\),
\(\bigl||f_J-t|-|t|\bigr|\le |f_J|\). Moreover,
\(\lambda_I^{|\cdot-t|}(f)\ge0\) by Jensen's inequality.

For a node \(P\), apply \eqref{eq:threshold-defects} first to a
finite number of generations below \(P\). As the number of
generations increases, the sums of defects increase on both sides.
Monotone convergence then yields
\[
\sum_{I\in\mathcal T(P)}\lambda_I^\Phi(f)\mu(I)
=
\frac12\int_{\mathbb R}
 \left(
   \sum_{I\in\mathcal T(P)}
      \lambda_I^{|\cdot-t|}(f)\mu(I)
 \right)\nu(dt).
\]
By \cref{thm:packing}, the inner sum equals
\[
\mu(P)\left(
  \avg{|f-t|}_P-|f_P-t|
\right)
\le \int_P|f-f_P|\,d\mu,
\]
where the inequality follows from the \(1\)-Lipschitz property of
\(x\mapsto|x-t|\). Thus the packing constant for \(\Phi\) is at most
\(\nu(\mathbb R)/2=(s_+-s_-)/2\).

Sharpness follows from \cref{thm:slope-variation}; it does not require \(\Phi\) to be positively homogeneous.

\subsection{Cancellation and the absolute-value functional}
For this subsection write
\[
 \scA_B(f):=\|\,|f|\,\|_{\ast,B},
 \qquad
 \scA_m(a):=F_{m,q}(|a|).
\]
Set \(f_+=\max\{f,0\}\) and \(f_-=\max\{-f,0\}\).
The exact packing identity (\cref{thm:packing}) gives
\[
 J_P^{|\cdot|}(f)
 =\frac1{\mu(P)}
   \sum_{I\in\cT(P)}\lambda_I^{|\cdot|}(f)\mu(I).
\]
The decomposition $f=f_+-f_-$ yields
\begin{equation}\label{eq:gap-signs}
 D_P(f)=2\min\{\avg{f_+}_P,\avg{f_-}_P\}.
\end{equation}
Indeed, set \(A=\avg{f_+}_P\) and \(B=\avg{f_-}_P\). Since
\(|f|=f_++f_-\) and \(f=f_+-f_-\), the Jensen gap for absolute value is
\[
D_P(f)
=\avg{|f|}_P-|\avg{f}_P|
=A+B-|A-B|
=2\min\{A,B\}.
\]
Thus the defect measures the smaller of the positive and negative
contributions to the average.

\begin{prop}[Amplification requires cancellation]\label{prop:amplification}
For every $f\in\BMO_\cT(B)$,
\begin{equation}\label{eq:A-C}
 \scA_B(f)\le\|f\|_{\ast,B}+\scC_B(f)\le2\|f\|_{\ast,B}.
\end{equation}
At finite depth for nonzero vector $a$, $\scC_m(a)=F_{m,q}(a)>0$ holds if and only if a node
$\sigma$ satisfies $A_\sigma(a)=0$ and $D_\sigma(a)=F_{m,q}(a)$.
\end{prop}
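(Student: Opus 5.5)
The inequality \eqref{eq:A-C} will come from a pointwise estimate on a single node. Fix a descendant \(P\) of \(B\). Since \(\bigl||x|-|y|\bigr|\le|x-y|\), I will bound \(\osc_P(|f|)\) by comparing \(|f|\) with the constant \(|f_P|\) and then correcting for the fact that the true average of \(|f|\) is \(\avg{|f|}_P\). The triangle inequality gives
\[
 \osc_P(|f|)=\avg{\bigl||f|-\avg{|f|}_P\bigr|}_P
 \le\avg{\bigl||f|-|f_P|\bigr|}_P+\bigl|\avg{|f|}_P-|f_P|\bigr|
 \le\osc_P(f)+J_P^{|\cdot|}(f).
\]
The last step uses two facts. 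The first is the reverse triangle inequality pointwise. The second is that \(\avg{|f|}_P-|f_P|=J_P^{|\cdot|}(f)\ge0\), so the absolute value may be dropped.

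Taking the supremum over \(P\in\cT(B)\) then gives \(\scA_B(f)\le\|f\|_{\ast,B}+\scC_B(f)\). The second inequality in \eqref{eq:A-C} is the case \(L=1\) of the final bound in \cref{thm:packing}, since \(\Phi=|\cdot|\) is \(1\)-Lipschitz. Equivalently, it is the first inequality in the proof of \cref{thm:node-anchor} with \(c=0\).

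For the finite-depth characterization, I will use the per-node bound from the proof of \cref{thm:node-anchor}. With \(c=0\) it reads \(J_\sigma(a;0)\le D_\sigma(a)\), where \(J_\sigma(a;0)=A_\sigma(|a|)-|A_\sigma(a)|\). I will prove the two directions separately.

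\emph{Sufficiency.} Suppose some node \(\sigma\) has \(A_\sigma(a)=0\) and \(D_\sigma(a)=F_{m,q}(a)\). Then
\[
 J_\sigma(a;0)=A_\sigma(|a|)=A_\sigma(|a-A_\sigma(a)|)=D_\sigma(a)=F_{m,q}(a).
\]
Hence \(\scC_m(a)\ge F_{m,q}(a)\), and the reverse inequality holds node by node, so \(\scC_m(a)=F_{m,q}(a)\). This common value is positive provided \(a\notin\R\one\). Indeed, if \(a=c\one\) with \(c\ne0\), then no node has zero average, so this case does not arise.

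\emph{Necessity.} Suppose \(\scC_m(a)=F_{m,q}(a)>0\). There are finitely many nodes, so some \(\sigma\) attains the maximum and satisfies
\[
 F_{m,q}(a)=J_\sigma(a;0)\le D_\sigma(a)\le F_{m,q}(a).
\]
Equality therefore holds throughout, and in particular \(D_\sigma(a)=F_{m,q}(a)\).

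The main obstacle is the remaining equality case: showing that \(J_\sigma(a;0)=D_\sigma(a)>0\) forces \(A_\sigma(a)=0\). Write \(x_\omega=a_\omega\) for \(\omega\in I_\sigma\) and \(\bar x=A_\sigma(a)\). The inequality \(J\le D\) comes from summing the pointwise bounds \(|x_\omega|-|\bar x|\le|x_\omega-\bar x|\). Equality in the average forces equality at every leaf, which means each \(x_\omega\) lies on the far side of \(\bar x\) from \(0\), or \(\bar x=0\).

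Suppose for contradiction that \(\bar x>0\). The equality condition then forces \(x_\omega\ge\bar x\) for every \(\omega\in I_\sigma\). Since these values average to \(\bar x\), all of them equal \(\bar x\), so \(D_\sigma(a)=0\). This contradicts \(D_\sigma(a)>0\). The case \(\bar x<0\) is symmetric, so \(\bar x=A_\sigma(a)=0\), which completes the proof.
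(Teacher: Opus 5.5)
Your proposal is correct and follows essentially the same route as the paper: the same triangle-inequality split through $|f_P|$ gives \eqref{eq:A-C}, and the same leafwise equality argument forces $A_\sigma(a)=0$. Your converse direction is slightly more explicit than the paper's bare appeal to \eqref{eq:C-finite}, because you also check that $F_{m,q}(a)>0$ follows automatically when $a\neq0$ has a node of zero average.
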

\begin{proof}
Put $M=\avg{|f|}_P$ and $\mu=f_P$. The reverse triangle inequality gives
\[
 \avg{\bigl||f|-M\bigr|}_P
 \le\avg{\bigl||f|-|\mu|\bigr|}_P+M-|\mu|
 \le\osc_P(f)+D_P(f).
\]
Taking suprema proves \eqref{eq:A-C}.
For the equality statement, choose a node attaining $\scC_m(a)$.
Equality in
\[
 \avg{|a|}_\sigma-|A_\sigma|
 \le D_\sigma(a)\le F_{m,q}(a)
\]
forces equality in the triangle inequality at every leaf. If $A_\sigma>0$,
this requires every leaf below $\sigma$ to be at least $A_\sigma$; averaging
then makes the restriction constant. If $A_\sigma<0$, the analogous argument
uses leaves at most $A_\sigma$. Both contradict positive oscillation.
Thus $A_\sigma=0$. The converse follows directly from \eqref{eq:C-finite}.
\end{proof}
Applying \eqref{eq:A-C} to a depth-$m$ function gives
\[
\scA_m(a)-F_{m,q}(a)\le \scC_m(a).
\]
Consequently, if \(F_{m,q}(a)=1\) and
\(\scA_m(a)\ge 1+\eta\), then \(\scC_m(a)\ge\eta\). This is a necessary condition. The converse does not follow: a positive
Jensen defect need not make \(\scA_m(a)>1\).

The one-step defect \(\lambda_I^{|\cdot|}(a)\) has a simple interpretation.
It is zero precisely when the child averages are all nonnegative or
all nonpositive. It is positive precisely when at least one child
average is strictly positive and another is strictly negative. In
the binary case, write the two child averages as \(x+d\) and \(x-d\).
Their parent average is \(x\), and
\[
\lambda_I^{|\cdot|}(a)
=\frac{|x+d|+|x-d|}{2}-|x|
=(|d|-|x|)_+,
\]
\section{Extremal constants for the absolute-value map}\label{sec:absolute}
Here $|\cdot|$ is the real absolute value, distinct from the field absolute
value $|\cdot|_K$. The family $\{\|\, | f |\,\|_{I,\ast}\}$ satisfies \cref{def:scN}. Put
\[
 \scA(f):=\|\,|f|\,\|_\ast,\qquad
 c(q):=c_\scA=\sup_{\|f\|_\ast\le1}\scA(f),\qquad
 \scA_m(a):=F_{m,q}(|a|)=\max_{|\sigma|\le m}D_\sigma(|a|).
\]
Write $c_m(q)=c_{\scA,m}$, $\bar c_m(q)=\bar c_{\scA,m}$, and
$\bar c(q):=\sup_m\bar c_m(q)$.

We refer to \(a\) as the input and \(|a|\) as the output of the
absolute-value map. Thus \(D_\sigma(a)\) and \(D_\sigma(|a|)\)
are their oscillations on node \(\sigma\), while
\(F_{m,q}(a)\) and \(\scA_m(a)=F_{m,q}(|a|)\) are their full tree
BMO seminorms.
\subsection{Reduction, anchoring, and exact finite optimization}
\label{subsec:absolute-LP}
\begin{prop}\label{prop:absolute-structure}
The absolute-value functional satisfies \cref{thm:reduction}. Its finite-depth functionals $\scA_m$ are continuous, rational piecewise linear, zero-leaf anchored,
and hereditary. Therefore
\begin{equation}\label{eq:absolute-reduction}
 c(q)=\sup_m c_m(q)=\sup_m\bar c_m(q),
 \qquad c_m(q)\le\bar c_{m+\lceil qm/(q-1)\rceil}(q).
\end{equation}
For each fixed $m$, both finite constants are rational and attained by
rational vectors.
\end{prop}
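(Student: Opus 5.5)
We verify the hypotheses of \cref{thm:reduction}, \cref{thm:finite-anchor}, \cref{thm:stable-centering} and \cref{cor:zero-leaf-extremizer} one at a time, and then read off \eqref{eq:absolute-reduction}.

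\emph{Step 1: \cref{def:scN}.} Positive homogeneity holds because $|\lambda f|=\lambda|f|$ for $\lambda>0$, and $|f|$ is constant wherever $f$ is. Invariance under rooted isomorphisms holds because $F_{m,q}(|a|)$ depends only on the leaf values and the tree structure.

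\emph{Step 2: condition \eqref{eq:LSC}.} For a fixed descendant $P$ we have $|E_m^I f|\to|f|$ in $\operatorname{L}^1(P)$, since $\bigl||x|-|y|\bigr|\le|x-y|$ and $E_m^If\to f$ by \eqref{Emf converges to f}. By \eqref{eq:osc-continuity}, $\osc_P(|E_m^If|)\to\osc_P(|f|)$. Hence
\[
\liminf_m \scA_I(E_m^I f)\ge \osc_P(|f|)
\]
for each $P$, and taking the supremum over $P$ gives \eqref{eq:LSC}. Notice that no contraction is needed, which matches \cref{rem:lsc-only}. \cref{thm:reduction} then gives $c(q)=\sup_m c_m(q)$.

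\emph{Step 3: continuity and rational piecewise linearity.} $\scA_m(a)=\max_\sigma D_\sigma(|a|)$. On each sign orthant $\{\varepsilon_\omega a_\omega\ge0\}$ we have $|a|=\varepsilon a$, a rational linear map. As shown in the proof of \cref{lem:compact}, $F_{m,q}$ is a maximum of finitely many rational linear forms. Refining each orthant by the cones on which a given form realizes the maximum gives a finite covering by closed rational polyhedral cones, and on each cone $\scA_m$ is linear with rational coefficients. Continuity is clear.

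\emph{Step 4: heredity.} If $a$ sits unchanged below a node of $\widetilde a$, then each node of that subtree is a node of $\widetilde a$ with the same $D_\sigma(|\cdot|)$. The maximum can only grow, so $\scA_M(\widetilde a)\ge\scA_m(a)$.

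\emph{Step 5: zero-leaf anchoring, the main step.} Fix $a$ and consider $g(c)=\scA_m(a+c\one)$. For each node $\sigma$,
\[
h_\sigma(c)=\frac1{n_\sigma}\sum_{\omega\in I_\sigma}\Bigl||a_\omega+c|-\frac1{n_\sigma}\sum_{\omega'\in I_\sigma}|a_{\omega'}+c|\Bigr|
\]
is continuous and piecewise linear in $c$, with breakpoints at the points $-a_\omega$ and possibly at further points where an inner absolute value changes sign. For $c$ beyond all $-a_\omega$, i.e.\ for $c\ge\max_\omega(-a_\omega)$ or $c\le\min_\omega(-a_\omega)$, all entries $a_\omega+c$ have one sign. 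There $h_\sigma(c)=D_\sigma(a)$ is constant, and this value is also attained at the extreme breakpoint, which is a zero-leaf translate.

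Between consecutive values of $-a_\omega$ the signs of all entries are fixed, so $|a+c\one|=\varepsilon a+c\varepsilon$ with $\varepsilon$ fixed. Then each $h_\sigma$ is the mean absolute deviation of the vector $\varepsilon a+c(\varepsilon-\bar\varepsilon_\sigma)$, where $\bar\varepsilon_\sigma$ is the mean of $\varepsilon$ over $I_\sigma$. This is a convex function of $c$, being an average of absolute values of affine functions. A maximum of convex functions is convex, so $g$ is convex on each interval. Its supremum over the closed interval is therefore attained at an endpoint $c=-a_\omega$.

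Together these give $\sup_c g=\max_\omega g(-a_\omega)$, which is \cref{def:zero-leaf}. The key point to check carefully is that the leaf signs alone make each inner absolute value affine. This holds because once the signs are fixed, $|a_\omega+c|$ is affine in $c$ and so is its average, so $\bigl||a_\omega+c|-\text{avg}\bigr|$ is the absolute value of an affine function, hence convex.

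\emph{Step 6: conclusions.} \cref{thm:stable-centering} gives $c_m(q)\le\bar c_{m+\lceil qm/(q-1)\rceil}(q)$ and the equality of the two suprema. \cref{thm:finite-anchor}, with the rational anchors $\ell_\omega(a)=a_\omega$, gives rationality and a rational extremizer for $c_m(q)$. For $\bar c_m(q)$, apply the vertex argument of \cref{thm:finite-anchor} directly to the compact rational polytope $\overline\Sigma_{m,q}=\Sigma_{m,q}\cap\ker A_\emptyset$, using \cref{lem:compact}; this yields a rational maximizer and a rational value.
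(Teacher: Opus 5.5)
Your proposal is correct and follows essentially the same route as the paper. The main steps are \eqref{eq:LSC} via $\operatorname{L}^1$ convergence of $|E_m^If|$ and \eqref{eq:osc-continuity}; zero-leaf anchoring via convexity of $c\mapsto\scA_m(a+c\one)$ between consecutive values of $-a_\omega$, together with constancy outside their range; and then \cref{thm:finite-anchor,thm:stable-centering} plus a vertex argument on the rational polytope $\overline\Sigma_{m,q}$. The differences are cosmetic: you check convexity node by node, whereas the paper uses the seminorm property of $F_{m,q}$ composed with an affine map, and you write out the rational polyhedral covering explicitly where the paper only asserts it.
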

\begin{proof}
Homogeneity, vanishing on constants, invariance under tree isomorphisms, and heredity follow
from the node formulas. If $E_m^If\to f$ in $\operatorname{L^1}(I)$, then
$|E_m^If|\to|f|$ in $\operatorname{L^1}(I)$. Equation \eqref{eq:osc-continuity} gives
convergence on every fixed descendant. Taking a lower limit after choosing a
node near the supremum proves \eqref{eq:LSC}.

To prove zero-leaf anchoring, fix $a\in V_{m,q}$ and consider $h(c)=F_{m,q}(|a+c\one|),\, c\in\R$.
The finitely many values \(-a_\omega\) divide \(\R\) into
intervals. On each interval, the sign of every coordinate
\(a_\omega+c\) is fixed. So
$|a+c\one|$ is affine in $c$, and \(h\) is convex there because \(F_{m,q}\)
is a seminorm. On a bounded interval, \(h\) therefore attains
its maximum at an endpoint. If \(c\) lies outside the smallest and largest values of
\(-a_\omega\), all coordinates of \(a+c\one\) have the same sign.
Translation invariance and symmetry of \(F_{m,q}\) then give
\(h(c)=F_{m,q}(a)\). This value occurs at the endpoint of the
corresponding outer interval as well. Hence the maximum over
\(c\in\R\) occurs at \(c=-a_\omega\) for some leaf \(\omega\). Thus a maximizing translation occurs at a zero leaf.

Each \(D_\sigma(|a|)\) is obtained from finitely many absolute values
and averages with rational coefficients. Hence
\(\scA_m(a)=\max_\sigma D_\sigma(|a|)\) is rational piecewise linear. Apply \cref{thm:finite-anchor,thm:stable-centering}.
For the centered constant, since \(\overline\Sigma_{m,q}\) is a compact rational polytope and
\(\scA_m\) is rational piecewise linear, maximizing on its finitely
many linearity regions shows that \(\bar c_m(q)\) is rational and is
attained by a rational vector.
\end{proof}
No conclusion about rationality or attainment of $c(q)$ follows from the
fixed-depth assertion.
\begin{remark}\label{exact-family-finite-linear-programs}
There is a useful simplification of the finite optimization. Let
\[
R_m(q):=\sup_{F_{m,q}(a)\le1}
D_\emptyset(|a|)=\sup_{F_{m,q}(a)\le1}
\frac1{q^m}\sum_{\omega\in\Omega_{m,q}}
\left|
  |a_\omega|
  -\frac1{q^m}\sum_{\omega'\in\Omega_{m,q}}|a_{\omega'}|\right|.
\]
Then $R_m(q)=c_m(q)$. Leaves contribute zero oscillation, so it suffices to consider nonleaf nodes, whose remaining depth is at least \(1\). Indeed, the definition gives $R_d(q)\le R_m(q)$
for $d\le m$, and any node of an admissible depth-$m$ vector carries an
admissible vector of the smaller remaining depth. Its absolute-value
oscillation is at most $R_d(q)\le R_m(q)$. Conversely, \(\scA_m(a)=F_{m,q}(|a|)\) is at least the oscillation of
\(|a|\) at the root. Taking the supremum over all \(a\) with
\(F_{m,q}(a)\le1\) gives \(c_m(q)\ge R_m(q)\), and hence
\(c_m(q)=R_m(q)\).

The identity \(c_m(q)=R_m(q)\) reduces the optimization to the
oscillation of \(|a|\) at the root. Writing $n=|\Omega_{m,q}|=q^m$, we show that $c_m(q)$ is the maximum of the following finite
family of rational linear programs, indexed by
$s\in\{-1,1\}^{\Omega_{m,q}}$ and $S\subseteq\Omega_{m,q}$:
\begin{equation}\label{eq:abs-LP}
 \begin{aligned}
 \text{maximize }&\quad
  \frac2n\left(\sum_{\omega\in S}s_\omega a_\omega
       -\frac{|S|}{n}\sum_{\omega\in\Omega_{m,q}} s_\omega a_\omega\right),\\
 \text{subject to }&\quad F_{m,q}(a)\le1,\quad
             s_\omega a_\omega\ge0\ (\omega\in\Omega_{m,q}),\quad
             a_{\omega_0}=0.
 \end{aligned}
\end{equation}
Here $\omega_0$ is any fixed leaf. By \cref{lem:compact},
\(F_{m,q}(a)\le1\) is equivalent to finitely many rational linear
inequalities. To verify the objective, set \(y_\omega=|a_\omega|\) and
\(\bar y=n^{-1}\sum_{\omega\in\Omega_{m,q}}y_\omega\). Since
\(\sum_\omega(y_\omega-\bar y)=0\), choosing \(S\) to contain the
nonnegative deviations gives
\[
 D_\emptyset(|a|)
=\frac1n\sum_{\omega\in\Omega_{m,q}}|y_\omega-\bar y|
 =\max_{S\subseteq\Omega_{m,q}}
    \frac2n\sum_{\omega\in S}(y_\omega-\bar y).
\]
For a fixed sign vector \(s\), the constraints
\(s_\omega a_\omega\ge0\) give \(y_\omega=s_\omega a_\omega\);
thus each choice of \(s\) and \(S\) yields the linear objective
in \eqref{eq:abs-LP}.

It remains to justify the constraint \(a_{\omega_0}=0\). This follows from a convexity argument similar to the one above. Indeed,
the function \(c\mapsto D_\emptyset(|a+c\one|)\) is convex between
consecutive values of \(-a_\omega\) and constant outside their range.
Hence its maximum is attained at \(c=-a_\omega\) for some leaf
\(\omega\). This translation preserves \(F_{m,q}(a)\) and makes
the value at \(\omega\) zero. A rooted tree automorphism can move
\(\omega\) to the prescribed leaf \(\omega_0\) while preserving both
\(F_{m,q}\) and the root objective. For the resulting vector $b$, choose its signs \(s=(s_\omega)\) so that $s_\omega b_\omega=|b_\omega|$ and set \[
 S=\{\omega\in\Omega_{m,q}:
             |b_\omega|\ge A_\emptyset(|b|)\}.
\]
Then $b$ is feasible for the corresponding program, whose objective equals \(D_\emptyset(|b|)\). Conversely, the objective of every feasible program is at most \(D_\emptyset(|a|)\), and hence at most \(R_m(q)\). Thus the family of linear programs in \eqref{eq:abs-LP} is exact, \textit{i.e.}, it computes $R_m(q)=c_m(q)$.
\end{remark}

\subsection{A sharp scalar inequality and the depth-one constant}
For an integrable real random variable $X$, write $\E$ for expectation and set
\[
\mu:=\E X,\qquad
X_+:=\max\{X,0\},\qquad X_-:=\max\{-X,0\},\qquad d(X):=\E|X-\E X|.
\]
For an event $A$, let $\chi_A$ denote its indicator, and write $A^c:=\Omega\setminus A$. The next lemma is useful beyond finite trees.
\begin{lemma}[A tail identity for folding]\label{lem:folding-tail}
After replacing $X$ by $-X$ if necessary, let $\mu=\E X\ge0$ and
$t=\E|X|$. Set $S=\{|X|>t\}$ and $p=\Prob(S)$. Define
\[
 N=\E(X_-\chi_S),\quad R=\E(X_-\chi_{S^c}),\quad
 D=\E((\mu-X)_+\chi_{S^c}),\quad
 U=\E((X-\mu)_+\chi_{S^c}),
\]
and $r=\Prob(S\cap\{X<0\})$. Then
\begin{align}
 \frac12d(X)&=N+r\mu+D,\label{eq:tail-input}\\
 \frac12d(|X|)&=2(1-p)N-2pR+D-U.\label{eq:tail-output}
\end{align}
Consequently,
\begin{equation}\label{eq:tail-bound}
 d(|X|)\le\max\{1,2(1-p)\}\,d(X).
\end{equation}
If $S$ has only one sign, then $d(|X|)\le d(X)$.
\end{lemma}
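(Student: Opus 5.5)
The plan is a direct computation: each half mean deviation is written as one-sided deviation mass, split over $S$ and $S^c$, and all pieces are expressed through $N,R,D,U,r,p,\mu$. The reflection $X\mapsto -X$ changes neither $d(X)$ nor $d(|X|)$, so we may assume $\mu\ge0$. We use the standard identities
\[
 \tfrac12 d(Y)=\E(\E Y-Y)_+=\E(Y-\E Y)_+,
 \qquad |X|=X+2X_-,
\]
which give $t=\E|X|=\mu+2(N+R)$ and in particular $t\ge\mu\ge0$.

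For \eqref{eq:tail-input}, write $\tfrac12 d(X)=\E\bigl((\mu-X)_+\chi_S\bigr)+D$. On $S$ either $X>t\ge\mu$, where $(\mu-X)_+=0$, or $X<-t\le0$, where $(\mu-X)_+=\mu+X_-$. Taking expectations of the second case gives $r\mu+N$, which proves \eqref{eq:tail-input}.

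For \eqref{eq:tail-output}, apply the identity to $Y=|X|$, whose mean is $t$:
\[
 \tfrac12 d(|X|)=\E\bigl((|X|-t)\chi_S\bigr)=(1-p)t-\E\bigl(|X|\chi_{S^c}\bigr).
\]
On $S^c$ write $|X|=X+2X_-$ and decompose $X=\mu+(X-\mu)_+-(\mu-X)_+$. This gives $\E(|X|\chi_{S^c})=(1-p)\mu+U-D+2R$. Substituting $t=\mu+2N+2R$, the $\mu$ terms cancel and we obtain $2(1-p)N-2pR+D-U$.

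The bound \eqref{eq:tail-bound} then follows by dropping the nonpositive terms $-2pR$ and $-U$:
\[
 \tfrac12 d(|X|)\le 2(1-p)N+D\le\max\{1,2(1-p)\}(N+D)\le\max\{1,2(1-p)\}(N+r\mu+D),
\]
where the last step uses $r\mu\ge0$. By \eqref{eq:tail-input} the right-hand side equals $\max\{1,2(1-p)\}\cdot\tfrac12 d(X)$.

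The one-sign statement needs a separate case split, since \eqref{eq:tail-bound} only gives the factor $2(1-p)$, which can exceed $1$.
\begin{itemize}
\item If $S\subseteq\{X>0\}$, then $N=0$ and $r=0$. The bound above reads $\tfrac12 d(|X|)\le D=\tfrac12 d(X)$.
\item If $S\subseteq\{X<0\}$, go back to $\tfrac12 d(|X|)=\E(|X|\chi_S)-tp$. Here $\E(|X|\chi_S)=N$, so this equals $N-tp\le N\le N+r\mu+D=\tfrac12 d(X)$.
\end{itemize}

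The only delicate point is this second case: the expanded formula \eqref{eq:tail-output} does not make the inequality visible, so one must return to the unexpanded expression $\E\bigl((|X|-t)\chi_S\bigr)$ rather than work from \eqref{eq:tail-output}. Everything else is bookkeeping of signs on $S$ and $S^c$.
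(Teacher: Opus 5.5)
Your proof is correct and follows essentially the same route as the paper: the same $S/S^c$ splitting of $\tfrac12 d(Y)=\E(Y-\E Y)_+$, the same identity $t=\mu+2(N+R)$, and the same dropping of $-2pR$ and $-U$ to obtain \eqref{eq:tail-bound}. The differences are only bookkeeping. You reach \eqref{eq:tail-output} through $(1-p)t-\E(|X|\chi_{S^c})$ rather than the paper's $H+N-pt$ with $H=\E(X_+\chi_S)$. You also settle the one-sign cases via $N=r=0$ and $\tfrac12 d(|X|)=N-tp$, instead of the paper's direct comparisons with $\E(X-\mu)_+$ and $\E(\mu-X)_+$.
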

\begin{proof}
For any integrable real random variable \(Y\),
\[
\frac12d(Y)
=\E(\E Y-Y)_+
=\E(Y-\E Y)_+.
\]
Since \(t=\E|X|\ge\mu\), every positive value of \(X\) in \(S\)
exceeds \(\mu\). Thus \((\mu-X)_+=0\) on
\(S\cap\{X>0\}\). On \(S\cap\{X<0\}\), we have
\((\mu-X)_+=\mu+X_-\). The contribution from \(S^c\) is \(D\), so
\[
\frac12d(X)
=\E(\mu-X)_+
=N+r\mu+D,
\]
which proves \eqref{eq:tail-input}.

Set \(H=\E(X_+\chi_S)\). Because
\(\E X=\E X_+-\E X_-\), we have
$t=\mu+2(N+R)$.
Splitting the expectation of \(X\) over \(S\) and \(S^c\) gives
$H-N+\E(X\chi_{S^c})=\mu$.
Moreover, the identity
\((\mu-X)_+-(X-\mu)_+=\mu-X\) yields
$D-U=(1-p)\mu-\E(X\chi_{S^c})$. Since \(S=\{|X|>t\}\),
\begin{align*}
\frac12d(|X|)
 =\E(|X|-t)_+
 =H+N-pt
 =2(1-p)N-2pR+D-U.
\end{align*}
This proves \eqref{eq:tail-output}.

Now \(R,U,r,\mu\ge0\). Dropping the nonpositive terms in
\eqref{eq:tail-output} and using \eqref{eq:tail-input}, we obtain
\begin{align*}
\frac12d(|X|)
 \le 2(1-p)N+D
 \le \max\{1,2(1-p)\}(N+D)
 \le \frac12\max\{1,2(1-p)\}d(X).
\end{align*}
This is \eqref{eq:tail-bound}. It remains to prove the stronger conclusion when \(S\) has only one
sign. If \(X\ge0\) on \(S\), then \(t\ge\mu\) gives
\[
\frac12d(|X|)
 =\E((X-t)\chi_S)
 \le\E((X-\mu)\chi_S)
 \le\E(X-\mu)_+
 =\frac12d(X).
\]
If \(X\le0\) on \(S\), then \(t\ge-\mu\) gives
\[
\frac12d(|X|)
 =\E((-X-t)\chi_S)
 \le\E((\mu-X)\chi_S)
 \le\E(\mu-X)_+
 =\frac12d(X).
\]
If \(S\) is empty, then \(|X|\le t=\E|X|\) almost surely, so
\(|X|=t\) almost surely and \(d(|X|)=0\).
\end{proof}
\begin{thm}[Sharp folding on equal atoms]\label{thm:scalar-sharp}
For $n\ge2$ and $a_1,\ldots,a_n\in\R$,
\begin{equation}\label{eq:scalar-sharp}
 \frac1n\sum_{i=1}^n\bigl||a_i|-\overline{|a|}\bigr|
 \le\delta_n\frac1n\sum_{i=1}^n|a_i-\bar a|,
 \qquad \delta_n:=\max\{1,2-4/n\}.
\end{equation}
The constant is sharp.
\end{thm}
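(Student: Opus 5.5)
The plan is to deduce \eqref{eq:scalar-sharp} from \cref{lem:folding-tail}, applied to the random variable $X$ that takes each value $a_i$ with probability $1/n$. Then $d(X)=\frac1n\sum_i|a_i-\bar a|$ and $d(|X|)=\frac1n\sum_i\bigl||a_i|-\overline{|a|}\bigr|$. Replacing every $a_i$ by $-a_i$ changes neither side, so the normalization $\mu\ge0$ of the lemma costs nothing. Set $t=\E|X|$ and $S=\{|X|>t\}$ as in the lemma.

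The case analysis on $S$ has three branches.
\begin{enumerate}
\item If $S$ is empty, the last paragraph of the proof of \cref{lem:folding-tail} gives $d(|X|)=0$.
\item If $S$ has only one sign, the lemma gives $d(|X|)\le d(X)\le\delta_n d(X)$.
\item Otherwise $S$ contains an atom with $X>0$ and an atom with $X<0$. Since all atoms have mass $1/n$, this forces $p=\Prob(S)\ge 2/n$. Then \eqref{eq:tail-bound} yields
\[
 d(|X|)\le\max\{1,2(1-p)\}\,d(X)\le\max\{1,2-4/n\}\,d(X)=\delta_n d(X).
\]
\end{enumerate}
The key point is this sharpening from $p\ge1/n$ to $p\ge2/n$. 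It is available only because the one-sign case is already covered by the lemma's stronger conclusion. That is where the equal-atom hypothesis enters, and it is the only step that needs care.

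For sharpness, split according to which term attains the maximum in $\delta_n$.
\begin{enumerate}
\item If $n\le4$, then $\delta_n=1$. Take the nonconstant, nonnegative vector $a=(1,0,\ldots,0)$. Here $|a|=a$, so equality holds with both sides positive.
\item If $n\ge4$, take $a=(1,-1,0,\ldots,0)$. Then $\bar a=0$ and $d(X)=2/n$. For the folded vector, $\overline{|a|}=2/n$, and
\[
 d(|X|)=\frac1n\Bigl(2\bigl(1-\tfrac2n\bigr)+(n-2)\tfrac2n\Bigr)=\frac1n\Bigl(4-\frac8n\Bigr).
\]
The ratio is therefore $2-4/n=\delta_n$.
\end{enumerate}
In both cases the ratio $\delta_n$ is attained, so no smaller constant can replace it.
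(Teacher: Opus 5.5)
Your proposal is correct and is essentially the paper's own proof: apply \cref{lem:folding-tail} on $n$ equal atoms, use the lemma's one-sign contraction (which covers empty $S$), and otherwise observe that $S$ contains at least two atoms, so $p\ge 2/n$ can be inserted into \eqref{eq:tail-bound}. Your sharpness examples are also the paper's: a nonnegative nonconstant vector, and $(1,-1,0,\ldots,0)$, which is a rescaling of the paper's $(-n/2,n/2,0,\ldots,0)$.
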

\begin{proof}
Apply \cref{lem:folding-tail} on $n$ equal atoms. If the large-magnitude set
$S$ has only one sign, contraction holds. Otherwise it contains at least two
atoms, so $p\ge2/n$ and \eqref{eq:tail-bound} gives \eqref{eq:scalar-sharp}.
A nonconstant nonnegative vector gives ratio $1$. The vector
$(-n/2,n/2,0,\ldots,0)$ has input oscillation $1$ and output oscillation
$2-4/n$. These examples prove sharpness for every $n$.
\end{proof}

\begin{cor}[Finite-depth bounds]\label{cor:finite-abs}
For all $q\ge2$ and $m\ge1$,
\[
 c_1(q)=\delta_q,\qquad
 c_m(q)\le\delta_{q^m}<2.
\]
In particular, $c_1(2)=c_2(2)=1$.
\end{cor}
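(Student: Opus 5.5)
The plan is to combine the root reduction of \cref{exact-family-finite-linear-programs} with the scalar inequality of \cref{thm:scalar-sharp}. By that remark, $c_m(q)=R_m(q)$, so every bound reduces to controlling the oscillation of $|a|$ at the root.

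\emph{Upper bound for all $m$.} Take $a$ with $F_{m,q}(a)\le1$. The root oscillation $D_\emptyset(|a|)$ is the left side of \eqref{eq:scalar-sharp} with $n=q^m$ equal atoms, namely the leaf values. Hence
\[
D_\emptyset(|a|)\le\delta_{q^m}D_\emptyset(a)\le\delta_{q^m}.
\]
Taking the supremum gives $c_m(q)=R_m(q)\le\delta_{q^m}=\max\{1,2-4/q^m\}<2$.

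\emph{The case $m=1$.} The same bound gives $c_1(q)\le\delta_q$. For the lower bound I embed the extremal vectors from the proof of \cref{thm:scalar-sharp} as depth-one leaf values. A nonconstant nonnegative vector gives ratio $1$. For $q\ge3$, the vector $(-q/2,q/2,0,\ldots,0)$ gives ratio $2-4/q$. At depth one the only nonleaf node is the root, so $F_{1,q}(a)=D_\emptyset(a)$. Both examples are therefore admissible after normalization, and they attain $\scA_1(a)=\delta_q$. For $q=2$, $\delta_2=1$, and a nonnegative nonconstant pair already gives ratio $1$. This proves $c_1(q)=\delta_q$.

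\emph{The binary statement.} First, $c_1(2)=\delta_2=1$. Second, $c_2(2)\le\delta_4=\max\{1,2-1\}=1$. Third, $c_2(2)\ge c_1(2)=1$ by monotonicity of the finite-depth constants. Together these give $c_2(2)=1$.

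The only point needing care is that the scalar theorem applies at the root with atoms weighted by leaf measure. This holds because the leaves of $\cT_m(I)$ all have measure $q^{-m}\mu(I)$, so no step presents a real obstacle. The substantive work has already been done in \cref{lem:folding-tail}.
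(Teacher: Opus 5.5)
Your proof is correct and essentially the paper's. The only difference is the reduction to a single node. The paper applies \cref{thm:scalar-sharp} at every node $\sigma$ with its $q^{m-|\sigma|}$ equal leaves and uses that $\delta_n$ is nondecreasing in $n$. You instead invoke $c_m(q)=R_m(q)$ from \Cref{exact-family-finite-linear-programs}, which packages the same node-by-node reduction, and then apply the scalar theorem only at the root. Your depth-one extremizers and your binary argument ($c_2(2)\le\delta_4=1\le c_1(2)\le c_2(2)$) match the paper's.
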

\begin{proof}
At depth one the input and output have just one nonzero node oscillation,
so \cref{thm:scalar-sharp} is exactly the extremal problem.
At an arbitrary node of a depth-$m$ tree, apply the theorem to its
$q^{m-|\sigma|}$ equal leaves. The constants $\delta_n$ are nondecreasing in $n$.
For $q=2,m=2$, $\delta_4=1$, and a nonnegative nonconstant vector gives
the reverse bound.
\end{proof}
The same argument applies to a finite probability space in which
every atom has probability at least \(\beta\). Let
\(S=\{|X|>\E|X|\}\). If \(S\) contains values of both signs, it
contains at least two atoms, so \(\Prob(S)\ge 2\beta\). The bound
in \eqref{eq:tail-bound} then gives
\[
 d(|X|)\le \max\{1,2-4\beta\}\,d(X).
\]
If \(S\) contains values of only one sign, \cref{lem:folding-tail}
gives the stronger bound \(d(|X|)\le d(X)\). Hence
\(\max\{1,2-4\beta\}\) is an upper bound for the folding constant
on this probability space. This is a bound for one averaging set;
it does not give a uniform tree-BMO bound unless the required
lower bound on relative atom probabilities holds at every node
under consideration.

\subsection{Centered constructions beyond two generations}
Set $\Delta_q:=2-4/q+4/q^2$. The following centered
depth-two vector gives the lower bound \(\Delta_q\) for the
absolute-value map. On two root children of the root, assign the leaf values
\[
 (q,0,q/2,\ldots,q/2),\qquad
 (-q,0,-q/2,\ldots,-q/2),
\]
and put zero on all remaining root children. The root mean is zero, its mean
absolute value is $1$, and each nonconstant child has oscillation $1$.
Thus $F_{2,q}=1$. There are $q^2-2q+2$ zero leaves, and all nonzero absolute
values are at least $1$. The root absolute-value oscillation is therefore
$2(q^2-2q+2)/q^2=\Delta_q$. Child absolute-value oscillations are $1$.
Hence
\begin{equation}\label{eq:depth-two-lower}
 \bar c_2(q)\ge\Delta_q.
\end{equation}

\begin{prop}[A stronger centered $q$-ary construction]\label{thm:q-counterexample}
For every $q\ge3$ there is a centered depth-three vector with
\begin{equation}\label{eq:q-counterexample}
 F_{3,q}(a)=1,\qquad
 \scA_3(a)=\Delta_q+\frac{q-2}{q^3}.
\end{equation}
Consequently $c(q)=\sup_m\bar c_m(q)>\Delta_q$ for every $q\ge3$. In particular, \(\Delta_q\) is not the global sharp constant for the absolute-value map.
\end{prop}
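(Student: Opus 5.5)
The plan is to produce the vector explicitly, as a perturbation of the centered depth-two example preceding \eqref{eq:depth-two-lower}, and then check \eqref{eq:q-counterexample} node by node. The strict inequality and the statement about $c(q)$ then follow from \eqref{eq:absolute-reduction}: a centered depth-three vector with $F_{3,q}(a)=1$ gives $\bar c_3(q)\ge \Delta_q+(q-2)/q^3>\Delta_q$ because $q\ge3$, and $c(q)=\sup_m\bar c_m(q)\ge\bar c_3(q)$.

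\textbf{Why the depth-two example is stuck, and where the gain must come from.} Its root absolute-value oscillation is
\[
\frac2n\sum_{\omega}\bigl(\overline{|a|}-|a_\omega|\bigr)_+ .
\]
This equals $2\cdot(\text{zero-leaf fraction})\cdot\overline{|a|}$ because every nonzero $|a_\omega|\ge 1=\overline{|a|}$. There are two ways to raise it without breaking $F\le1$:
\begin{enumerate}
\item increase the share of leaves lying below the mean of $|a|$;
\item increase $\overline{|a|}$ while keeping the zero set.
\end{enumerate}
The target gain $(q-2)/q^3$ suggests $q-2$ depth-three leaves, each contributing $1/q^3$. So I would refine constantly to depth three and then split, inside each of the two nonconstant root children, each of the $q-2$ grandchildren currently carrying $\pm q/2$. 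Each such grandchild gets leaves of the shape $(\pm x, 0, \pm y,\ldots,\pm y)$, the depth-three analogue of the depth-two pattern. The grandchild average stays $\pm q/2$, so the child and root averages and the child-level input oscillations are untouched. The new zero leaves lie below $\overline{|a|}$ and supply the extra output oscillation. The leaves $x,y$ are chosen so that each split grandchild has oscillation exactly $1$, and so that the root mean of $|a|$ stays $1$, or moves in the favourable direction.

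\textbf{Carrying it out.} I would treat $x,y$ (and possibly a small rescaling of the child values $q,0,q/2$) as unknowns and proceed in this order.
\begin{enumerate}
\item Write the linear constraints $D_\sigma(a)\le1$ on all nodes; by \cref{lem:compact} there are finitely many rational ones.
\item Fix the sign pattern and the set $S$ of leaves with $|a_\omega|\ge\overline{|a|}$.
\item Maximize the corresponding objective of the linear program \eqref{eq:abs-LP} from \Cref{exact-family-finite-linear-programs}.
\end{enumerate}
Since $\scA_3=R_3$ is governed by the root, it suffices to compute the root output oscillation. A separate check is still needed that every proper node has output oscillation at most the root value; \cref{cor:finite-abs} at smaller depth bounds these by $\delta_{q^2}$, which may need a direct check instead. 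Centering is automatic by the $\pm$ symmetry between the two nonconstant children, and $F_{3,q}(a)=1$ is certified by exhibiting a node with oscillation exactly $1$.

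\textbf{Main obstacle.} The main difficulty is the interaction of constraints across levels. Splitting a grandchild with average $q/2$ into values including $0$ forces large leaf values, since oscillation $\le1$ on $q$ leaves with mean $q/2$ and one zero leaf forces the remaining leaves close to $q/2+1/2$. Those larger values raise $\overline{|a|}$, which may push previously "large" leaves into the deficit region and change $S$. It is this change that makes the net gain come out to exactly $1/q^3$ per split grandchild. I would first try $y=q/2+1/2$, $x$ determined by the mean condition, and verify the root sum directly. If the count is off, I would fall back to solving the small LP for the symmetric family, using the dual certificate of \cref{rem:certificates} to confirm the value $\Delta_q+(q-2)/q^3$.
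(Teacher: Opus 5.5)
Your derivation of the ``consequently'' clause from $\bar c_3(q)\ge\Delta_q+(q-2)/q^3$ and \eqref{eq:absolute-reduction} is fine, but the construction itself fails. You claim that splitting the $q/2$ grandchildren leaves the child-level input oscillations untouched, and this is false: the oscillation of a root child is computed from its depth-three leaves, not from its grandchild averages. In the depth-two example the positive child has mean $q/2$ and oscillation exactly $1$, all of it coming from the grandchildren $q$ and $0$. Its oscillation equals $\frac1q\sum_G\avg{|a-q/2|}_G$ over its grandchildren $G$. A grandchild of mean $q/2$ that contains a zero leaf has oscillation at least $1$. Splitting all $q-2$ of them therefore gives child oscillation $1+\frac{q-2}{q}=2-\frac2q>1$; for $q=3$, the leaves $(3,3,3,0,0,0,2,0,5/2)$ have oscillation $4/3$. (Exhibiting one node of oscillation $1$ only proves $F_{3,q}(a)\ge1$.)

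No choice of $x,y$ or rescaling repairs this, because your whole family is capped at $\Delta_q$. Suppose the root has $q-2$ zero children and two single-signed children of opposite signs, and let $M$ be the root mean of $|a|$. Since the root is centered, $M=D_\emptyset(a)\le1$, and each nonconstant child has $|a|$-mean $qM/2$. Let $Y$ denote $|a|$ on such a child. Single-signedness makes $\E|Y-qM/2|$ that child's input oscillation, so $\E(qM/2-Y)_+\le\frac12$. Moreover $(M-y)_+\le\frac2q(qM/2-y)_+$ for $y\ge0$, so $\E(M-Y)_+\le\frac1q$. Hence
\[
 D_\emptyset(|a|)=2\avg{(M-|a|)_+}\le2\Bigl(\frac{q-2}{q}M+\frac2{q^2}\Bigr)\le\Delta_q .
\]
Proper nodes are single-signed, so their output oscillation is at most $1$. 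Thus your LP fallback, restricted to this family, returns exactly $\Delta_q$ at every depth.

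The missing idea is to change the root-level structure. Place leaves below the root mean of $|a|$ inside a root child whose own mean is also below it, so that they cost no child oscillation. The paper uses $q-1$ positive root children of mean $u=q/(2(q-1))<1$. Each carries a single nonzero grandchild $(v+q/2,u,v,\ldots,v)$ with $v=qu$. A single negative child has grandchildren $(-q,0,-q/2,\ldots,-q/2)$, refined constantly. The number of zero leaves is the same as in the depth-two example. The gain $(q-2)/q^3$ comes instead from the $q-1$ leaves equal to $u$, each contributing $2(1-u)/q^3$ to the root output oscillation.
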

\begin{proof}
Put
\[
 u:=\frac{q}{2(q-1)},\qquad v:=qu=\frac{q^2}{2(q-1)}.
\]
On each of $q-1$ root children, select one grandchild and give its $q$ leaves
the values
\begin{equation}\label{eq:positive-block}
 v+q/2,\quad v-q/2=u,\quad v,\ldots,v.
\end{equation}
Fill the other $q-1$ grandchildren with zero. The selected grandchild has
mean $v$ and oscillation $1$. All its values are at least $u$.
The root child has mean $u$ and oscillation
\[
 \frac1q(v-u)+\frac{q-1}{q}u
       =\frac{2(q-1)}q u=1.
\]
It is nonnegative, so its absolute-value oscillations are the same.
On the remaining root child put, on its $q$ grandchildren, the constant
values
\begin{equation}\label{eq:negative-block}
 -q,\quad0,\quad-q/2,\ldots,-q/2,
\end{equation}
and refine them constantly to depth three. This child has mean $-q/2$ and
oscillation $1$. Since $(q-1)u=q/2$, the root mean is zero and the root mean
absolute value is $1$. Every other nonzero oscillation has already been
checked, proving $F_{3,q}(a)=1$.

There are $2q(q-1)$ nonzero leaves among $q^3$ leaves. Of these, exactly
$q-1$ have value $u<1$; all remaining absolute values are at least $1$.
For a nonnegative variable of mean $1$, its oscillation is twice its mean
deficit below $1$. Thus the root output oscillation is
\begin{align*}
 2\left(1-\frac{2(q-1)}{q^2}
           +\frac{q-1}{q^3}(1-u)\right)
 &=2-\frac4q+\frac4{q^2}+\frac{q-2}{q^3}.
\end{align*}
All proper descendants have output oscillation at most $1$, so this is the
full output seminorm. The vector is centered and the asserted strict
inequality follows.
\end{proof}
For example, the new lower bounds are $31/27$ for $q=3$ and $41/32$ for
$q=4$. For $q=3$, the three nine-leaf blocks can be written as
\[
 (3/4,9/4,15/4,0,0,0,0,0,0),\quad
 (3/4,9/4,15/4,0,0,0,0,0,0),
\]
\[
 (-3,-3,-3,0,0,0,-3/2,-3/2,-3/2).
\]
Their root mean is zero and their full input seminorm is $1$.

\subsection{Failure of contractivity on binary trees}
By a strong (respectively, rooted) binary tree, we mean a strong (respectively, rooted) homogeneous $\frac12$-tree. For \(q=2\), the absolute-value map is contractive at depths one
and two. The next vector shows that contractivity fails at
depth four: its output BMO seminorm exceeds its input BMO
seminorm, \text{i.e.}, the global sharp constant $c(2)$ for absolute-value map cannot be $1$.
\begin{thm}[An explicit binary counterexample]\label{thm:binary-counterexample}
On the depth-four binary tree, order leaves lexicographically and set
\begin{equation}\label{eq:binary-vector}
 a=(0,-8,-12,-12,\ 0,0,0,0,\ 6,1,1,1,\ 1,10,14,14).
\end{equation}
Then
\[
 F_{4,2}(a)=5,\qquad \scA_4(a)=\frac{41}{8}.
\]
There is a centered depth-five extension with the same two seminorms.
In particular,
\begin{equation}\label{eq:binary-lower}
 c_4(2)\ge\frac{41}{40},\qquad
 \bar c_5(2)\ge\frac{41}{40},\qquad c(2)\ge\frac{41}{40}>1.
\end{equation}
\end{thm}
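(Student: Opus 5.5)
The plan is a direct finite verification, followed by one application of the centering step from the proof of \cref{thm:stable-centering} and then normalization. Write the sixteen leaves as four depth-two blocks
\[
(0,-8,-12,-12),\quad (0,0,0,0),\quad (6,1,1,1),\quad (1,10,14,14),
\]
with block means \(-8\), \(0\), \(9/4\), \(39/4\). The root mean is \(A_\emptyset(a)=16/16=1\). There are \(31\) nonleaf nodes: the root, its two children, four depth-two nodes and eight depth-three pairs. I will list \(A_\sigma(a)\) and \(D_\sigma(a)\) for each.

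Pairs \((x,y)\) have \(D=|x-y|/2\), so the pair values are immediate: \(4,0,0,0,5/2,0,9/2,0\). Each four-leaf block oscillation is a short computation. The two half-tree nodes, with eight leaves each, and the root require averaging absolute deviations from means \(-4\), \(6\) and \(1\). At the root this gives
\[
D_\emptyset(a)=\tfrac1{16}\sum_\omega|a_\omega-1|.
\]
I expect the maximum \(5\) to be attained at the root or at one of the half-tree nodes, and I will check that every other node is at most \(5\). This gives \(F_{4,2}(a)=5\).

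Next I repeat the computation for \(|a|\). The key value is the root, where the mean of \(|a|\) is \(80/16=5\) and
\[
D_\emptyset(|a|)=\tfrac1{16}\sum_\omega\bigl||a_\omega|-5\bigr|,
\]
which should equal \(41/8\). I will then check that every other node of \(|a|\) has oscillation at most \(41/8\). On the first block the values are only sign-flipped relative to a one-signed configuration, so those nodes reproduce input oscillations bounded by \(5\). The real work is the two half-tree nodes, which mix signs.

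This bookkeeping over all \(31\) nodes, for both \(a\) and \(|a|\), is the main obstacle. It is purely arithmetic, but it is error-prone. I would organize it as a table of \((A_\sigma,D_\sigma)\) pairs computed via the formula \(D=2\,\E(X-\E X)_+\).

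For the centered extension I follow the construction in \cref{thm:stable-centering} with \(q=2\) and \(\mu_0=1\). A single step suffices, since \(\delta_0=\min\{1,1/2\}\) would take two steps. Instead I will pass directly to mean zero: put \(a\) on one child of a new root and the constant \(-1\) on the other child, refined to depth four. The new root then has mean zero. Its input oscillation is
\[
\tfrac12\E|a|+\tfrac12\cdot1=\tfrac52+\tfrac12=3\le5.
\]
For the output, \(|\widetilde a|\) has root mean \((5+1)/2=3\), and its oscillation is
\[
\tfrac12\E\bigl||a|-3\bigr|+\tfrac12\cdot 2 .
\]
I will verify directly that this is at most \(41/8\). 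Old descendant oscillations are unchanged and the constant sibling contributes nothing, so heredity (\cref{prop:absolute-structure}) gives
\[
F_{5,2}(\widetilde a)=5,\qquad \scA_5(\widetilde a)=\tfrac{41}{8}.
\]

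Finally, dividing both vectors by \(5\) and using positive homogeneity yields admissible vectors with output \(41/40\). This gives \(c_4(2)\ge41/40\) and \(\bar c_5(2)\ge41/40\), and \eqref{eq:absolute-reduction} gives \(c(2)\ge41/40\).
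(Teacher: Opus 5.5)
Your plan is correct and follows the paper's proof: node-by-node verification of $F_{4,2}(a)=5$, the root output computation $\frac1{16}\sum_\omega\bigl||a_\omega|-5\bigr|=\frac{82}{16}=\frac{41}{8}$, one-step centering with a constant $-1$ sibling (new-root values $3$ and $\frac{55}{16}$), and division by $5$. A few corrections: the half-tree nodes do not mix signs (the left half is nonpositive and the right half nonnegative), so, as the paper observes, every proper node has identical input and output oscillations and only the root needs the $|a|$ computation; there are $15$ nonleaf nodes, not $31$; and for the admissible vector $a/5$ (mean $1/5$) the construction of \cref{thm:stable-centering} itself takes exactly one step with $\kappa_0=-1/5$, which is your construction rescaled.
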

\begin{proof}
The root mean is $1$. The left and right children of the root have means \(-4\)
and \(6\), respectively. Direct averaging gives the input oscillations in
\cref{tab:binary}; entries in each row are ordered from left to right.
\begin{table}[ht]
\centering
\caption{Exact node data for \eqref{eq:binary-vector}. Leaves have zero oscillation.}
\label{tab:binary}
\begin{tabular}{cll}
\toprule
Depth & Node means & Input oscillations\\
\midrule
0 & $1$ & $5$\\
1 & $-4,\ 6$ & $5,\ 5$\\
2 & $-8,\ 0,\ 9/4,\ 39/4$ & $4,\ 0,\ 15/8,\ 35/8$\\
3 & $-4,\ -12,\ 0,\ 0,\ 7/2,\ 1,\ 11/2,\ 14$
  & $4,\ 0,\ 0,\ 0,\ 5/2,\ 0,\ 9/2,\ 0$\\
\bottomrule
\end{tabular}
\end{table}
This proves $F_{4,2}(a)=5$. Both root children have one sign, so every proper
node has identical input and output oscillations. At the root, the mean
absolute value is $5$. The magnitudes below $5$ consist of five zeros and
four ones. Their total deficit is $5\cdot5+4\cdot4=41$. Hence
\[
 D_\emptyset(|a|)=\frac2{16}\,41=\frac{41}{8},
\]
which exceeds every proper-node oscillation.

To center the vector, place \(a\) on one child of a new root
and assign the constant value \(-1\) to the other child,
refining that child to sixteen leaves. The new root has mean
zero and input oscillation
\[
 \frac12\avg{|a|}+\frac12|-1|=\frac{5+1}{2}=3.
\]
Its mean absolute value is \(3\). Since
\(\sum_{i=1}^{16}\bigl||a_i|-3\bigr|=78\), its output
oscillation is
\[
 \frac12\cdot\frac{78}{16}
 +\frac12|1-3|=\frac{55}{16}.
\]
Both new-root oscillations are smaller than the corresponding
seminorms of the retained subtree. Thus the depth-five vector
has input seminorm \(5\) and output seminorm \(41/8\).
Dividing its values by \(5\) gives the centered admissible
vector in \eqref{eq:binary-lower}.

\end{proof}
This one-step extension follows the construction in
\cref{thm:stable-centering}: after dividing \(a\) by \(5\),
its mean is \(1/5\), and the constant \(-1/5\) on the other
child makes the new root mean zero. The original depth-four
vector remains unchanged inside one child, where its output
seminorm exceeds its input seminorm. Therefore, an estimate
at the newly added root alone cannot establish contractivity
for the entire centered tree.

\subsection{A recursive binary construction}
The preceding depth-four vector gives a counterexample
to binary contractivity. The next construction improves its
lower bound. It uses a measurable subset $E$ of a root $B$ in a binary rooted tree whose indicator has unusually small BMO seminorm, then approximates the resulting function by finite-depth vectors.

\begin{lemma}[The least nonzero indicator seminorm]\label{lem:indicator}
On a rooted homogeneous $\dfrac1q$-tree $\cT(B)$,
\begin{equation}\label{eq:indicator-minimum}
 \inf_{\substack{E\subset B\ \mathrm{measurable}\\
                  0<\mu(E)<\mu(B)}}\|\chi_E\|_{\ast,B}
       =\frac{2q}{(q+1)^2},
\end{equation}
and the infimum is attained by a measurable set \(E\subset B\).
\end{lemma}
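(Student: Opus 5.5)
The plan is to reduce everything to node densities. For a measurable $E\subset B$ and $P\in\cT(B)$ put $x_P:=\mu(E\cap P)/\mu(P)\in[0,1]$. Since $\chi_E$ takes only the values $0,1$, a direct computation gives
\[
 \osc_P(\chi_E)=x_P(1-x_P)+(1-x_P)x_P=2x_P(1-x_P).
\]
So $\|\chi_E\|_{\ast,B}=\sup_P 2x_P(1-x_P)$. The function $x\mapsto 2x(1-x)$ is at least $2q/(q+1)^2$ exactly on the interval $I_q:=[1/(q+1),\,q/(q+1)]$. The lower bound therefore amounts to showing that some node has $x_P\in I_q$, and attainment amounts to building a set all of whose densities lie in $\{0,1\}\cup\partial I_q$.

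\emph{Lower bound.} Assume $0<\mu(E)<\mu(B)$. If $x_B\in I_q$ we are done. Otherwise, replacing $E$ by $B\setminus E$ if necessary (this leaves every oscillation unchanged), we may assume $x_B<1/(q+1)$.
\begin{itemize}
\item Since $\mu(E)>0$, the differentiation property \eqref{Emf converges to f} applied to $\chi_E$ gives a point $x\in E\cap X_\cT$ with $x_{I_n(x)}\to1$. Along the chain $I_n(x)$ the densities therefore eventually exceed $1/(q+1)$.
\item Let $P$ be the first node on this chain with $x_P\ge1/(q+1)$. Its parent $P'$ satisfies $x_{P'}<1/(q+1)$.
\item The parent density is the average of the $q$ child densities, all of which are nonnegative. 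Hence $x_P\le q\,x_{P'}<q/(q+1)$.
\end{itemize}
Thus $x_P\in I_q$ and $\osc_P(\chi_E)\ge 2q/(q+1)^2$. This one-step "no jumping over the interval" estimate is the heart of the lower bound.

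\emph{Attainment.} I would build $E$ self-similarly with prescribed densities, using two rules.
\begin{itemize}
\item A node of density $1/(q+1)$ gets one child of density $q/(q+1)$ and $q-1$ children of density $0$. The average is $\tfrac1q\cdot\tfrac q{q+1}=\tfrac1{q+1}$.
\item A node of density $q/(q+1)$ gets one child of density $1/(q+1)$ and $q-1$ children of density $1$. The average is $\tfrac1q\bigl(\tfrac1{q+1}+q-1\bigr)=\tfrac q{q+1}$.
\end{itemize}
Start with root density $1/(q+1)$. The "active" nodes, those with density in $\partial I_q$, form a single chain $B=P_0\supset P_1\supset\cdots$ with $\mu(P_k)=q^{-k}\mu(B)$. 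Define $E$ as the countable union of the children assigned density $1$. It is measurable.

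The step requiring care is verifying that the actual densities of this $E$ equal the prescribed ones. I would check this by summing the measures of the density-$1$ nodes inside a given active node $P_k$. This gives a geometric series, or equivalently a backward recursion whose error term is bounded by $\mu(P_{k+j})\to0$, and the sum matches the prescribed value. Nodes disjoint from the chain are either contained in a density-$1$ node or disjoint from $E$, so they have density $1$ or $0$. Consequently every oscillation is $0$ or $2q/(q+1)^2$, and since $0<x_B<1$ the set is nontrivial. This gives equality in \eqref{eq:indicator-minimum}.
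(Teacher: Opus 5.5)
Your proposal is correct and follows the paper's proof essentially step for step. It uses the same reduction to $\osc_P(\chi_E)=2x_P(1-x_P)$, the same first-crossing argument along a differentiation chain with $x_P\le q\,x_{P'}<q/(q+1)$, and the same alternating construction between densities $1/(q+1)$ and $q/(q+1)$ with frozen $0$/$1$ children. Your geometric-series check that the union of density-$1$ nodes has exactly the prescribed densities fills in a detail the paper only asserts.
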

\begin{proof}
Fix a measurable $E\subset B$ with $0<\mu(E)<\mu(B)$.
Put $a=1/(q+1)$ and $p_I=\mu(E\cap I)/\mu(I)$.
The oscillation of $\chi_E$ on $I$ is $2p_I(1-p_I)$.
If $p_B\in[a,1-a]$, the lower bound follows at the root. Replacing $E$
by its complement $B\setminus E$ if needed, the remaining case is $0<p_B<a$.
At almost every point of $E$, the conditional densities tend to $1$.
Along such a path, consider the first node $J$ whose density is at least $a$.
Its parent has density less than $a$, so $p_J\le q p_{\operatorname{parent}(J)}
<qa=1-a$. Thus some node has density in $[a,1-a]$, proving the lower bound.

To attain it, prescribe root density $a$. A node of density $a$ has one
child of density $1-a$ and $q-1$ children of density $0$. A node of density
$1-a$ has one child of density $a$ and $q-1$ children of density $1$.
Freeze the zero and one nodes. There is one unfrozen node at every generation,
of relative measure $q^{-k}$, so the union of the nodes frozen at density $1$
defines a measurable set with exactly the prescribed conditional densities.
The only nonzero indicator oscillation is $2a(1-a)=2q/(q+1)^2$.
\end{proof}
The extremizing set is constructed as a union of descendant
nodes; it is generally not a single node.
\begin{thm}\label{thm:binary-recursive}
On every homogeneous binary rooted tree there is a bounded function $f$ with
\[
 \|f\|_{\ast,B}=1,\qquad \|\,|f|\,\|_{\ast,B}=\frac{25}{24}.
\]
The same ratio is attained by a centered bounded function. In particular,
$c(2)=\bar c(2)\ge25/24$.
\end{thm}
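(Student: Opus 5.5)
The plan is to build $f$ explicitly from the extremal set of \cref{lem:indicator} with $q=2$. Put $a=1/3$ and let $E\subset B$ be the attaining set, so that $\|\chi_E\|_{\ast,B}=2q/(q+1)^2=4/9$. Its conditional densities follow the pattern $a\to(1-a,0)$ and $1-a\to(a,1)$. The reason for using $E$ is that a two-valued function $s+t\chi_E$ has input seminorm only $4t/9$, while its distribution at the root still carries a definite mass of both values. I will work with a finite-parameter family of bounded functions, one piece on each root child:
\[
 f=\bigl(s_1+t_1\chi_{E_1}\bigr)\chi_{B_1}-\bigl(s_2+t_2\chi_{E_2}\bigr)\chi_{B_2}.
\]
Here $B_1,B_2$ are the children of $B$, each $E_i\subset B_i$ is a transported copy of the extremal set (or its complement, or a constant-density piece), and $s_i,t_i\ge0$. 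On each child $f$ has one sign, as in \cref{thm:binary-counterexample}. Hence every proper descendant has equal input and output oscillations, and by \cref{lem:indicator} these are at most $\max_i 4t_i/9$.

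Step one is to evaluate the two root oscillations in closed form. On $B$ the function takes at most four values $s_1,\ s_1+t_1,\ -s_2,\ -s_2-t_2$, with probabilities fixed by the densities $p_i\in\{1/3,2/3\}$ of $E_i$ in $B_i$. The identity $\tfrac12 d(Y)=\E(Y-\E Y)_+$ then turns both $\osc_B(f)$ and $\osc_B(|f|)$ into piecewise-linear expressions in $(s_i,t_i)$. For example, the symmetric choice $s_i=0$, $t_i=9/4$, $p_i=1/3$ gives exactly ratio $1$. This confirms that the mechanism is right and that the gain must come from unbalancing the children, so that $|f|$ acquires values on both sides of its mean in the proportions exploited by \cref{lem:folding-tail}.

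Step two is to maximize $\osc_B(|f|)$ subject to three constraints: $\osc_B(f)\le1$, the child bounds $4t_i/9\le1$, and admissibility of the intermediate nodes. This is a small linear program on each sign/ordering cell. I expect the optimum to have the child oscillations and the root input oscillation all active, with value $25/24$. If this family falls short, I will enlarge it by one further generation, allowing a constant block next to the $E$-block in one child; this is the analogue of the zeros used in \cref{thm:binary-counterexample}. Finding a family whose optimum is exactly $25/24$ is the main obstacle, and I would locate it with the exact LPs of \Cref{exact-family-finite-linear-programs} before verifying it by hand.

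For centering, I will place the resulting $f$ on one child of a new root and put a constant on the other child, as in the proof of \cref{thm:stable-centering} and in \cref{thm:binary-counterexample}. I then check that the new root's input and output oscillations stay below $1$ and $25/24$ respectively. This shows $\bar c(2)\ge25/24$. Since $f$ is bounded, \cref{lem:extension} together with \cref{prop:absolute-structure} (through \cref{thm:reduction}) transfers the ratio to $c(2)$, and the identity $c(2)=\bar c(2)$ is \eqref{eq:absolute-reduction}.
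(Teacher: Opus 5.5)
There is a genuine gap. The theorem is an explicit existence statement, but your proposal never produces the function: Steps one and two only anticipate that some LP optimum will equal $25/24$. Moreover, the family you actually set up cannot amplify at all.

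Take $p_i\in\{1/3,2/3\}$, bound the proper nodes by $\frac49 t_i\le 1$, and orient so that $f_B\ge0$. Then either $\osc_B(|f|)\le\osc_B(f)$, or $s_1\le\avg{|f|}_B\le s_2+t_2$. In the latter case
$\osc_B(|f|)=\frac{2-p_1-p_2}{2}(p_1t_1+p_2t_2)+\frac{p_1-p_2}{2}(s_1-s_2)$.
For $p_1=p_2$ this equals $\frac29(t_1+t_2)\le1$. In the two unbalanced cells, the constraints $f_B\ge0$, $\avg{|f|}_B\le\osc_B(f)\le1$ and $s_i\ge0$ again force a value at most $1$. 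So the exact LPs you propose would certify ratio $1$, and your symmetric example is already optimal in your family. Note also that \emph{Lemma on indicators} gives only a lower bound $4/9$ on indicator seminorms. If your ``constant-density piece'' means a node, its indicator has seminorm $1/2$, so your proper-node bound $\max_i 4t_i/9$ is false for it.

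What is missing is the specific asymmetric configuration, which uses density $1/2$ on the positive child. This is exactly what $p_i\in\{1/3,2/3\}$ excludes.

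\emph{Negative child.} Use the extremal $1/3$-set only here, with $s_2=0$: set $f=-\frac94\chi_E$ on $B_-$, so that $|f|=0$ on root probability $\frac13$.

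\emph{Positive child.} Do not use $E$ at all. Put the constants $\frac14$ and $\frac94$ on the two children of $B_+$. The oscillation at $B_+$ is then $t_1/2=1$.

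\emph{Verification.} We get $f_{B_-}=-\frac34$, $f_{B_+}=\frac54$, $f_B=\frac14$ and $\avg{|f|}_B=1$. Every value on $B_-$ is at most $f_B$, and every value on $B_+$ is at least $f_B$; the low value $\frac14$ sits exactly at the crossing threshold. Hence $\osc_B(f)=\frac12(1+1)=1$. Meanwhile $|f|$ takes the values $0,\frac14,\frac94$ with probabilities $\frac13,\frac14,\frac5{12}$, so $\osc_B(|f|)=2\bigl(\frac13+\frac14\cdot\frac34\bigr)=\frac{25}{24}$. Proper descendants have one sign and oscillation at most $1$.

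\emph{Centering.} Once this $f$ is in hand, your centering by a constant sibling $-\frac14$ does work: the new root has input oscillation $\frac58$ and output oscillation $\frac{65}{96}$. Placing $f$ and $-f$ on the two children, as the paper does, is simpler. It leaves the root distribution of $|f|$ unchanged and makes the new root input oscillation exactly $\avg{|f|}_B=1$.
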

\begin{proof}
Let $B_-,B_+$ be the children of a root $B$. Inside the child \(B_-\), use the measurable set \(E\) supplied by
\cref{lem:indicator} for \(q=2\). Relative to \(B_-\), this set has measure \(1/3\); its densities on descendant nodes
belong to \(\{0,1/3,2/3,1\}\), and
\begin{equation}\label{eq:third-indicator}
 \|\chi_E\|_{\ast,B_-}=\frac49.
\end{equation}
On $B_-$ set
$f=-\frac94\chi_E$, using the relative construction above. On the two
children of $B_+$ set $f=1/4$ and $f=9/4$, respectively. Then
\[
 f_{B_-}=-\frac34,\qquad f_{B_+}=\frac54,\qquad
 f_B=\frac14,\qquad \avg{|f|}_B=1.
\]
Every value on $B_-$ is at most $f_B$, and every value on $B_+$ is at least
$f_B$. Consequently $\osc_B(f)=1$. Equation \eqref{eq:third-indicator}
gives $\|f\|_{\ast,B_-}=1$, and the two-valued right child has oscillation $1$.
Thus $\|f\|_{\ast,B}=1$.

At the root with probabilities computed using the normalized measure
\(\mu/\mu(B)\), the absolute values of \(f\) satisfy
\[
 \Prob\{|f|=0\}=\frac13,\qquad
 \Prob\{|f|=\tfrac14\}=\frac14,\qquad
 \Prob\{|f|=\tfrac94\}=\frac5{12}.
\]
Since its mean is $1$, its mean absolute deviation is
\[
 \osc_B(|f|)=2\left(\frac13+\frac14\cdot\frac34\right)
            =\frac{25}{24}.
\]
On each proper descendant, $f$ has one sign, so its input and output
oscillations agree and are at most $1$. This proves the first assertion.

For a centered version, place $f$ and $-f$ on the two children of another
root. The new mean is zero and the new input root oscillation is
$\avg{|f|}_B=1$. The output distribution at the new root is unchanged.
All descendant input oscillations remain at most $1$. Thus \(g\) has the asserted centered ratio. Norm-preserving extension (\cref{lem:extension}) gives the global examples.
\end{proof}

For completeness, we give finite-depth approximations of this
construction. Use the dyadic interval model with root
\(B=[0,1)\) and left child \(B_-=[0,1/2)\). In the proof of
\cref{thm:binary-recursive}, choose
\[
 E=[0,1/6)\subset B_-.
\]
Under the normalized coordinate \(u=2x\) on \(B_-\), this is
the set \([0,1/3)\), which attains the bound in
\cref{lem:indicator} for \(q=2\). For $m\ge2$ put
$n=2^{m-1}$, $k=\lfloor n/3\rfloor$, and $r=n-3k\in\{1,2\}$. In the formulas below, we identify \(f_m\) with its ordered
vector of depth-\(m\) leaf values. The depth-$m$ conditional expectation $f_m$ has, on the left half, the values
\[
 \underbrace{-9/4,\ldots,-9/4}_{k\text{ leaves}},\quad
 -3r/4,\quad
 \underbrace{0,\ldots,0}_{n-k-1\text{ leaves}},
\]
and, on the right half, $n/2$ values $1/4$ followed by $n/2$ values $9/4$.
The order is part of the construction. Contractive approximation and the
root calculation give $F_{m,2}(f_m)=1$. Its root output oscillation is
\begin{equation}\label{eq:binary-finite-family}
 D_\emptyset(|f_m|)=
 \begin{cases}
  25/24-5/(12n),&r=1,\\
  25/24-1/(3n),&r=2.
 \end{cases}
\end{equation}
Indeed, the root mean of $|f_m|$ is $1$; the zero leaves contribute
$(n-k-1)/(2n)$ to the negative deviation, the right-hand $1/4$ leaves
contribute $3/16$, and the exceptional left leaf contributes $1/(8n)$
exactly when $r=1$. Doubling this sum proves
\eqref{eq:binary-finite-family}. All proper-node output oscillations are
at most $1$, and the right child has oscillation $1$. Therefore
\[
 \scA_m(f_m)=\max\{1,D_\emptyset(|f_m|)\}\longrightarrow\frac{25}{24}.
\]
Placing \(f_m\) and \(-f_m\) on the two children of a new
root produces a centered depth-\((m+1)\) vector. Its input
seminorm is \(1\), and its output seminorm is
\(\scA_m(f_m)\): the new root has the same distribution of
absolute values as either child. Hence these centered finite
vectors have ratios tending to \(25/24\).

\subsection{An all-depth upper bound from \texorpdfstring{$\operatorname{L}^2$}{L2}-BMO}
The constructions above give lower bounds. We now combine a
scalar variance estimate with an \(\operatorname{L}^1\)-to-\(\operatorname{L}^2\) BMO
comparison to obtain an upper bound at every depth. We first
apply the comparison to finite-depth functions, which are
bounded, and then use the finite-tree reduction.

Here, we use the notions of \cref{lem:folding-tail}.
\begin{lemma}[A variance improvement]\label{lem:variance-folding}
Let $K\ge1$ and $\Var(X)=\E\bigl[(X-\E X)^2\bigr]$. If $X$ is square integrable and
$d(X)\le1$, $\Var(X)\le K^2$, then
\begin{equation}\label{eq:variance-folding}
 d(|X|)\le\frac{2K^2}{K^2+1}.
\end{equation}
\end{lemma}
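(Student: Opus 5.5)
The plan is to combine the tail bound \eqref{eq:tail-bound} of \cref{lem:folding-tail}, which is good when the set $S=\{|X|>t\}$, $t=\E|X|$, has large probability, with a Cauchy--Schwarz bound coming from the variance, which is good when $S$ has small probability. Then we optimize over $p=\Prob(S)$. If $p=0$, the last paragraph of the proof of \cref{lem:folding-tail} gives $d(|X|)=0$, so we may assume $p>0$.

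\emph{First bound (from the tail lemma).} By \eqref{eq:tail-bound} and $d(X)\le1$,
\[
 d(|X|)\le\max\{1,2(1-p)\}.
\]
If $p\ge1/2$, this gives $d(|X|)\le1\le 2K^2/(K^2+1)$, because $K\ge1$. So the remaining case is $0<p<1/2$, where the bound reads $d(|X|)\le2(1-p)$.

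\emph{Second bound (from the variance).} First, folding does not increase variance:
\[
 \Var(|X|)=\E X^2-(\E|X|)^2\le \E X^2-(\E X)^2=\Var(X)\le K^2 .
\]
Write $Y=|X|-t$, so $\E Y=0$. The identity $\tfrac12 d(|X|)=\E(Y\chi_S)=\E((-Y)\chi_{S^c})$ holds because $\E Y=0$. Cauchy--Schwarz on $S$ and on $S^c$ gives
\[
 \bigl(\tfrac12 d(|X|)\bigr)^2\le p\,\E(Y^2\chi_S)
 \quad\text{and}\quad
 \bigl(\tfrac12 d(|X|)\bigr)^2\le (1-p)\,\E(Y^2\chi_{S^c}).
\]
Divide the first inequality by $p$ and the second by $1-p$, and add. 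Since $\E Y^2=\Var(|X|)\le K^2$, this yields
\[
 \tfrac14 d(|X|)^2\Bigl(\tfrac1p+\tfrac1{1-p}\Bigr)\le K^2,
 \qquad\text{that is,}\qquad
 d(|X|)\le 2K\sqrt{p(1-p)}.
\]

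\emph{Optimization.} For $p\in(0,1/2)$ we now have
\[
 d(|X|)\le\min\{2(1-p),\,2K\sqrt{p(1-p)}\}.
\]
The first function of $p$ is decreasing and the second is increasing on this interval. Hence the minimum is at most their common value at the crossing point. The crossing satisfies $(1-p)^2=K^2p(1-p)$, so $p^\ast=1/(K^2+1)$. At $p^\ast$ the common value is $2(1-p^\ast)=2K^2/(K^2+1)$, which proves \eqref{eq:variance-folding}. If $p^\ast$ falls outside $(0,1/2)$, monotonicity still bounds the minimum by this crossing value; note $p^\ast\le1/2$ anyway since $K\ge1$.

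The step I expect to need the most care is the second bound: one must check that the variance entering is that of $|X|$ about its own mean $t$, which is controlled by $\Var(X)$ via $\E|X|\ge|\E X|$. One must also check that the split into $S$ and $S^c$ is the same split used in \cref{lem:folding-tail}, so that both estimates refer to the same $p$.
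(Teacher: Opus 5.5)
Your proposal is correct and follows essentially the same route as the paper: the tail bound \eqref{eq:tail-bound} for $p\ge1/2$, a Cauchy--Schwarz estimate $d(|X|)\le 2K\sqrt{p(1-p)}$ based on $\Var(|X|)\le\Var(X)$ for small $p$, and the crossing point $p^\ast=1/(K^2+1)$. The differences are only cosmetic, and both variants give the identical bound: the paper applies Cauchy--Schwarz once to $\E\bigl((|X|-\E|X|)(\chi_S-p)\bigr)$ rather than separately on $S$ and $S^c$, and it compares variances via the reverse triangle inequality rather than via $\E|X|\ge|\E X|$.
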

\begin{proof}
Use the notation of \cref{lem:folding-tail} and put $H=d(|X|)/2$.
If $p\ge1/2$, \eqref{eq:tail-bound} gives $H\le1/2$.
If $p\le1/2$, the same inequality gives $H\le1-p$.
For $Y=|X|$, we have
\[
 H=\E((Y-\E Y)\chi_S)
   =\E((Y-\E Y)(\chi_S-p)).
\]
The mean minimizes squared deviation, and the reverse triangle
inequality gives
\[
 \Var(|X|)
 \le \E\bigl(|X|-|\E X|\bigr)^2
 \le \E|X-\E X|^2
 =\Var(X).
\]
Cauchy--Schwarz therefore yields
\(H\le K\sqrt{p(1-p)}\).
On $[0,1/2]$, the function $K\sqrt{p(1-p)}$ increases and $1-p$ decreases.
They meet at $p=1/(K^2+1)$, where their common value is
$K^2/(K^2+1)\ge1/2$. Combining the two bounds proves
\eqref{eq:variance-folding}.
\end{proof}

\begin{thm}[Uniform strict improvement of the factor $2$]\label{thm:global-upper}
For every homogeneous $q$-ary tree,
\begin{equation}\label{eq:global-upper}
 c(q)\le U_q:=\frac{2(q+1)^2}{q^2+3q+1}
             =2-\frac{2q}{q^2+3q+1}<2.
\end{equation}
\end{thm}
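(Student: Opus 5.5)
We reduce to finite depth and then combine \cref{lem:variance-folding} with the Slavin--Vasyunin comparison between $\operatorname{L}^1$- and $\operatorname{L}^2$-BMO on $\alpha$-trees \cite[Theorem~1.8]{SlavinVasyunin2016AlphaTrees}.

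By \cref{prop:absolute-structure} and \cref{thm:reduction}, $c(q)=\sup_m c_m(q)$. It therefore suffices to fix $m$ and a depth-$m$ vector $a$ with $F_{m,q}(a)\le1$, and to show $D_\sigma(|a|)\le U_q$ at every node $\sigma$. Fix a node $\sigma$ and let $X$ be the leaf value of $a$ under the normalized counting measure on the leaves below $\sigma$. Then $d(X)=D_\sigma(a)\le1$. The subtree below $\sigma$ is a rooted homogeneous $1/q$-tree, and $a$ restricted to it has $\operatorname{L}^1$-BMO seminorm at most $1$.

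The key input is the sharp $\operatorname{L}^1$-to-$\operatorname{L}^2$ comparison on $\alpha$-trees with $\alpha=1/q$. I would invoke \cite[Theorem~1.8]{SlavinVasyunin2016AlphaTrees} in the form
\[
\Var_\sigma(a)\le K_q^2\,\|a\|_{\ast,\sigma}^2,
\]
where $\Var_\sigma$ is the variance at node $\sigma$. The target constant should come out as
\[
K_q^2=\frac{(q+1)^2}{2q+1}.
\]
This is the value forced by the formula: solving $2K^2/(K^2+1)=2(q+1)^2/(q^2+3q+1)$ gives $K^2(q^2+3q+1)=(q+1)^2(K^2+1)$, that is, $K^2\bigl(q^2+3q+1-(q+1)^2\bigr)=(q+1)^2$, so $K^2=(q+1)^2/q$. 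I would therefore recheck which normalization the cited theorem uses and expect $K_q^2=(q+1)^2/q$.

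The main obstacle is matching the constant in Slavin--Vasyunin's theorem to the normalization used here. Their $\operatorname{L}^1$-BMO may be defined with the mean-centered oscillation, as here, and their $\operatorname{L}^2$ norm may be the square root of the variance. A plausibility check is the extremal indicator of \cref{lem:indicator}: it has $\operatorname{L}^1$-oscillation $2q/(q+1)^2$ and variance $a(1-a)=q/(q+1)^2$. The ratio of variance to squared oscillation is then $(q+1)^2/(4q)$, which differs from the $K^2$ computed above. So the precise constant must be read off from the cited theorem, possibly after a rescaling. I also have to note that finite-depth vectors are bounded, so they are square integrable and the comparison applies. Once $K^2=(q+1)^2/q\ge1$ is in hand, \cref{lem:variance-folding} gives
\[
D_\sigma(|a|)=d(|X|)\le \frac{2K^2}{K^2+1}=\frac{2(q+1)^2}{(q+1)^2+q}=U_q.
\]

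Finally, taking the maximum over nodes $\sigma$ gives $c_m(q)\le U_q$ for every $m$, and the reduction identity gives $c(q)\le U_q$. The closed form $U_q=2-2q/(q^2+3q+1)<2$ is immediate algebra.
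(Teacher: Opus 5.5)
Your argument is essentially the paper's proof: you reduce to $c_m(q)$ via \eqref{eq:absolute-reduction}, bound the variance at every node by the Slavin--Vasyunin comparison, which the paper cites in the form $\|f\|_{\ast,2;B}\le\frac{1+\alpha}{\sqrt\alpha}\|f\|_{\ast,B}$, and apply \cref{lem:variance-folding}. With $\alpha=1/q$ that cited constant gives exactly $K_q^2=(q+1)^2/q$, so no rescaling is needed and your first guess $(q+1)^2/(2q+1)$ should be dropped. Your indicator check does not conflict with this: the set of \cref{lem:indicator} only shows that the best squared comparison constant is at least $(q+1)^2/(4q)$, which is below $(q+1)^2/q$.
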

\begin{proof}
For a rooted tree write
\[
 \|f\|_{\ast,2;B}:=\sup_{P\in\cT(B)}
             \bigl(\avg{|f-f_P|^2}_P\bigr)^{1/2}.
\]
Slavin and Vasyunin's comparison on an $\alpha$-tree gives
\begin{equation}\label{eq:SV-comparison}
 \|f\|_{\ast,2;B}\le\frac{1+\alpha}{\sqrt\alpha}\|f\|_{\ast,B}
 \qquad(f\in\BMO^2_\cT(B));
\end{equation}
see \cite[Theorem~1.8, consequence~(1.8)]{SlavinVasyunin2016AlphaTrees}.
A homogeneous $q$-ary rooted tree is an $\alpha$-tree with $\alpha=1/q$.
For a bounded finite-depth function with $\|f\|_{\ast,B}\le1$, this gives
$\|f\|_{\ast,2;B}\le K_q:=(q+1)/\sqrt q$.
On each descendant $P$, regard $f$ as a random variable for normalized
measure. It has $d(f)\le1$ and variance at most $K_q^2$.
\Cref{lem:variance-folding} bounds its output oscillation by
$2K_q^2/(K_q^2+1)=U_q$.
Thus $c_m(q)\le U_q$ at every finite depth $m$.
Since
\(c(q)=\sup_m c_m(q)\) by \eqref{eq:absolute-reduction},
the same bound holds for every \(\operatorname{L}^1\)-BMO input. This passage
does not require an arbitrary \(\operatorname{L}^1\)-BMO input to belong to
\(\operatorname{L}^2\).
\end{proof}
Together with \cref{thm:q-counterexample,thm:binary-recursive}, this proves
\eqref{eq:intro-binary}--\eqref{eq:intro-general}. More precisely,
\[
 c_m(q)\le\min\{\delta_{q^m},U_q\}.
\]
We do not assert that $U_q$ is sharp. The exact global constant remains between the explicit lower
bounds found in \cref{thm:q-counterexample,thm:binary-recursive} and \(U_q\).

\subsection{A centered root counterexample and the scope of induction}
There is a direct way to put the binary amplification at a centered root,
not merely inside a centered tree. Let $a$ be \eqref{eq:binary-vector} and
make a depth-five vector $b=(a,-a)$, preserving the internal order in each
half. Then $A_\emptyset(b)=0$ and
\[
 D_\emptyset(b)=\frac1{16}\sum_i|a_i|=5.
\]
All proper-node input oscillations are at most $5$, so $F_{5,2}(b)=5$.
The two halves have the same magnitude distribution, and therefore
\[
 D_\emptyset(|b|)=\frac{41}{8},\qquad
 \scA_5(b)=\frac{41}{8}.
\]
Hence centering at the root does not restore the inequality
\(D_\emptyset(|b|)\le F_{5,2}(b)\). The two children of the
root have averages \(1\) and \(-1\). Within either child, its
two descendant subtrees have values of fixed sign, yet the different absolute values and how often
they occur give 
\(D_\emptyset(|b|)=41/8>5\). An argument based only on the
averages of the root children therefore misses the
oscillation responsible for amplification.

\section{A sharp norm for local martingale-difference envelopes}
\label{sec:martingale}
Differences between the averages of a node and its children
provide another family of local functionals. Their sharp
constants give explicit examples of our finite-tree
formalism. They also distinguish two rationality phenomena:
an envelope involving an \(\operatorname{L}^p\) average can have an
irrational constant already at finite depth, while a
weighted envelope can have rational constants at every
finite depth and an irrational global constant.

\subsection{A one-generation martingale-difference envelope}
The difference between a node average and its child averages
measures the change across one generation. We take the
\(\operatorname{L}^p\) size of this difference on each node and then the
supremum over nodes. This gives a family of local functionals
whose sharp constants can be computed directly.

For $I$ with children $I^{(1)},\ldots,I^{(q)}$, define
\[
 \Delta_I f:=\sum_{j=1}^q(f_{I^{(j)}}-f_I)\chi_{I^{(j)}},\qquad
 \Gamma_p(I;f):=\left(\avg{|\Delta_I f|^p}_I\right)^{1/p}
 \quad(1\le p<\infty),
\]
and $\Gamma_\infty(I;f):=\max_j|f_{I^{(j)}}-f_I|$. Let
\[
 \mathcal G_{p,B}(f):=\sup_{I\in\cT(B)}\Gamma_p(I;f),\qquad
 \mathcal G_p(f):=\sup_{I\in\cT}\Gamma_p(I;f).
\]
These are local and global martingale-difference envelopes.

\begin{prop}[Sharp one-generation envelope]\label{thm:gradient}
For $1\le p\le\infty$,
\begin{equation}\label{eq:gradient-sharp}
 \mathcal G_{p,B}(f)\le
 \left(\frac q2\right)^{1-1/p}\|f\|_{\ast,B},
\end{equation}
where $1/\infty=0$. The constant is sharp locally, globally, and at every
finite depth $m\ge1$. Centered depth-one functions attain it.
\end{prop}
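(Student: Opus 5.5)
The plan is to reduce the estimate to a single node and interpolate between the two endpoint bounds $p=1$ and $p=\infty$. Fix $I\in\cT(B)$ and put $d_j=f_{I^{(j)}}-f_I$. Then $\sum_j d_j=0$, and $\Gamma_p(I;f)=\bigl(q^{-1}\sum_j|d_j|^p\bigr)^{1/p}$. Write $s:=q^{-1}\sum_j|d_j|$.

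For the endpoints I use Jensen's inequality on the partition of $I$ into its children, exactly as in the proof of \cref{lem:edge}. This gives $s\le\osc_I(f)$, which is the case $p=1$. The zero-sum observation in that proof gives $\max_j|d_j|\le\frac12\sum_i|d_i|=\frac q2 s$, which is the case $p=\infty$ and reproduces \eqref{eq:edge-average-bound}.

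For $1<p<\infty$ I use the elementary inequality
\[
\frac1q\sum_j|d_j|^p\le\Bigl(\max_j|d_j|\Bigr)^{p-1}\frac1q\sum_j|d_j|\le\Bigl(\frac q2\Bigr)^{p-1}s^p .
\]
Taking $p$-th roots gives $\Gamma_p(I;f)\le(q/2)^{1-1/p}\osc_I(f)\le(q/2)^{1-1/p}\|f\|_{\ast,B}$. A supremum over $I\in\cT(B)$ then yields \eqref{eq:gradient-sharp}. The global version follows by taking a supremum over $B$.

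For sharpness I need a centered depth-one function with equality at every step. This requires $|d_j|\in\{0,\max_k|d_k|\}$ and also $\max_k|d_k|=\frac12\sum_i|d_i|$. The natural candidate puts the values $(1,-1,0,\ldots,0)$ on the children of $B$, with each value constant on its child. Its root mean is $0$ and $\osc_B=2/q$, while all deeper oscillations and all deeper $\Gamma_p$ vanish. At the root, $\Gamma_p=(2/q)^{1/p}$, so the ratio is
\[
(2/q)^{1/p}\cdot\frac q2=(q/2)^{1-1/p},
\]
and for $p=\infty$ the ratio is $q/2$.

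Refining this function constantly gives an extremizer at every depth $m\ge1$, so $c_{\mathcal G_p,m}$ equals the constant. The local constant is then attained directly. The global constant follows from the zero extension in \cref{lem:extension}, which preserves the input seminorm and can only increase the envelope. One could also invoke \cref{thm:reduction}, but this is not needed because the upper bound already holds for all $f$.

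The only delicate point is making sure that the Jensen step $s\le\osc_I(f)$ is legitimate for a general $f\in\BMO_\cT(B)$ rather than only for step functions. It is, since it is the same inequality used in \cref{lem:expectation}. There is no real obstacle beyond that.
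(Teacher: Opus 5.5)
Your proposal is correct and follows essentially the paper's proof. It uses the same node-wise reduction with $d_j=f_{I^{(j)}}-f_I$, the same zero-sum and Jensen bounds, and the same extremizer up to scaling (the paper uses $(q/2,-q/2,0,\ldots,0)$, normalized to oscillation $1$). The only cosmetic difference is how you obtain $\sum_j|d_j|^p\le 2T^p$, where $T=\tfrac12\sum_j|d_j|$: you use $\sum_j|d_j|^p\le(\max_j|d_j|)^{p-1}\sum_j|d_j|$, whereas the paper applies convexity to the positive and negative parts separately.
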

\begin{proof}
Put $d_j=f_{I^{(j)}}-f_I$. Then $\sum_jd_j=0$ and
\[
 \frac1q\sum_j|d_j|\le\osc_I(f).
\]
If $T=\sum_{d_j>0}d_j=-\sum_{d_j<0}d_j$, convexity gives
$\sum_j|d_j|^p\le2T^p$ for $1\le p<\infty$. Since
$T\le q\osc_I(f)/2$, this proves the desired inequality on $I$.
For $p=\infty$, each $|d_j|\le T$ gives the same endpoint.
Taking the supremum proves \eqref{eq:gradient-sharp}.

Assign the child values $(q/2,-q/2,0,\ldots,0)$ to a root and make each
child constant. The root mean is zero, its mean oscillation is $1$, and
\[
 \Gamma_p=\left(\frac2q\left(\frac q2\right)^p\right)^{1/p}
           =\left(\frac q2\right)^{1-1/p}.
\]
For $p=\infty$ the value is $q/2$. Constant refinement and the extension
lemma give all the claimed sharpness statements.
\end{proof}
The functionals \(\mathcal G_{p,B}\) satisfy the hypotheses of
\cref{thm:reduction}. They are homogeneous, translation
invariant, hereditary, and invariant under rooted tree
isomorphisms. For every fixed node, the child averages of
\(E_m^Bf\) agree with those of \(f\) once \(m\) is sufficiently
large, while \(E_m^Bf\) is constant
on nodes at or below depth \(m\). Consequently,
\[\mathcal G_{p,B}(E_m^Bf) =\max_{\substack{I\in\cT(B)\\
                  \operatorname{depth}_B(I)<m}}
      \Gamma_p(I;f)\longrightarrow\mathcal G_{p,B}(f),\] which
implies \eqref{eq:LSC}.

Subtracting the root average centers a finite vector without
changing its input seminorm or its envelope value. For
\(p=1\) and \(p=\infty\), the finite-depth functionals $\mathcal G_{p,m}$ are rational
piecewise linear. For intermediate \(p\), they need not be:
when \(q=3\) and \(p=2\), the sharp constant at every positive
depth is \(\sqrt{3/2}\).

\subsection{Cumulative martingale differences}
The preceding envelope measures changes across one
generation. For an integer \(N\ge1\), define the
\(N\)-generation difference envelope by
\[
 \Gamma_{p,N}(I;f):=\left(\avg{|E_N^I f-f_I|^p}_I\right)^{1/p},
 \qquad
 \mathcal H_{p,N;B}(f):=\sup_{I\in\cT(B)}\Gamma_{p,N}(I;f),
\]
with the usual maximum when $p=\infty$.

\begin{thm}[Sharp endpoints for cumulative differences]\label{thm:cumulative}
For every $N\ge1$,
\begin{equation}\label{eq:cumulative-endpoints}
 \mathcal H_{1,N;B}(f)\le\|f\|_{\ast,B},\qquad
 \mathcal H_{\infty,N;B}(f)\le\frac{Nq}{2}\|f\|_{\ast,B}.
\end{equation}
Both constants are sharp. At depth $m\ge1$, the second sharp constant is
$\min\{m,N\}q/2$. For $1<p<\infty$ one has the bound
\begin{equation}\label{eq:cumulative-interpolation}
 \mathcal H_{p,N;B}(f)\le
 \left(\frac{Nq}{2}\right)^{1-1/p}\|f\|_{\ast,B}.
\end{equation}
No sharpness assertion for \eqref{eq:cumulative-interpolation} is made when
$N>1$.
\end{thm}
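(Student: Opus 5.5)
The plan is to prove the two endpoint inequalities separately, to obtain \eqref{eq:cumulative-interpolation} by interpolating between them pointwise, and then to build extremizers.

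\textbf{The $p=1$ endpoint.} On a node $I$ the function $E_N^I f$ is the conditional expectation of $f$ onto the depth-$N$ partition of $I$, and it has the same average $f_I$. Jensen's inequality on each depth-$N$ descendant $J$, exactly as in the proof of \cref{lem:expectation}, gives
\[
 \avg{|E_N^If-f_I|}_I\le\avg{|f-f_I|}_I=\osc_I(f).
\]
Taking the supremum over $I\in\cT(B)$ proves the first inequality in \eqref{eq:cumulative-endpoints}. For sharpness, take a depth-one function refined constantly, for example the values $(q/2,-q/2,0,\ldots,0)$ on the root children. Then $E_N^If=f$ on the root and $\Gamma_{1,N}=1=\|f\|_{\ast,B}$ at every depth $m\ge1$. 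Constant refinement together with \cref{lem:extension} transfers this locally and globally.

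\textbf{The $p=\infty$ endpoint.} For a depth-$N$ descendant $J$ of $I$, write $E_N^If-f_I$ on $J$ as a telescoping sum along the chain $I=J_0\supset J_1\supset\cdots\supset J_N=J$:
\[
 f_J-f_I=\sum_{k=1}^N\bigl(f_{J_k}-f_{J_{k-1}}\bigr).
\]
By \eqref{eq:edge-average-bound}, each term is at most $\frac q2\osc_{J_{k-1}}(f)\le\frac q2\|f\|_{\ast,B}$, which gives the bound $Nq/2$. At depth $m$, $E_N^If$ involves only generations down to $\min\{m,N\}$, so only $\min\{m,N\}$ of these increments can be nonzero. The bound then improves to $\min\{m,N\}q/2$.

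\textbf{Interpolation.} For $1<p<\infty$, write $g=E_N^If-f_I$. Then
\[
 \avg{|g|^p}_I\le\|g\|_\infty^{p-1}\avg{|g|}_I,
\]
and the two endpoint bounds give \eqref{eq:cumulative-interpolation}.

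\textbf{Sharpness of the $\infty$ constant (main obstacle).} The extremizer must make every increment along one chain equal to $q/2$ in the same direction, while every node keeps oscillation at most $1$. I would try a nested construction. At each node of the chain, put relative deviation $+q/2$ on the distinguished child and $-q/2$ on one sibling, with the remaining children at the node mean; this realizes the depth-one extremal pattern of \eqref{eq:edge-average-bound}.

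The difficulty is that the distinguished child is not constant, so it contributes extra oscillation at its parent. The first thing to try is a chain of sets modeled on the proof of \cref{lem:indicator}. At depth $k$, give the distinguished child's subtree, relative to its parent, a mean-oscillation pattern in which all of the subtree's values lie on one side of the parent mean. Then the parent's oscillation is computed from child averages alone, i.e.\ it equals $\frac1q\sum_j|d_j|=1$, with no additional contribution. Concretely, I would choose the values so that everything inside the distinguished child exceeds the parent mean, and everything in the negative sibling, which is kept constant, lies below it; the neutral siblings are constant at the parent mean.

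Checking the one-sidedness is where the work lies. Since later increments keep pushing values upward along the chain, but the descendants off the chain sit at $\pm q/2$ around intermediate means, one must verify that no value inside the distinguished child falls below the parent mean. If this fails, I would let the extremal ratio be approached only in the limit, taking a supremum over families and using \cref{thm:reduction}, rather than attaining it exactly. The same family also gives the depth-$m$ value $\min\{m,N\}q/2$.
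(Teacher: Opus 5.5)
Your upper bounds follow the paper's proof: Jensen on each depth-$N$ descendant for $p=1$, the edge estimate \eqref{eq:edge-average-bound} summed along $N$ edges for $p=\infty$ (with at most $\min\{m,N\}$ nonzero increments at depth $m$), the inequality $\avg{|g|^p}\le\|g\|_\infty^{p-1}\avg{|g|}$ for $1<p<\infty$, and a centered depth-one function for sharpness at $p=1$. You diverge only in the $p=\infty$ extremizer. There you left open the one check that matters, namely one-sidedness. That check succeeds, so no limiting argument is needed. The fallback through \cref{thm:reduction} would not have produced near-extremizers anyway, since that theorem identifies constants and constructs nothing. The analogy with \cref{lem:indicator} is also unnecessary.

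Here is the check. Take $h=q/2$ and a chain $I_0\supset I_1\supset\cdots\supset I_N$. Your recipe gives $f_{I_k}=kh$, makes the negative sibling of $I_k$ constant at $(k-2)h$ and the other $q-2$ siblings constant at $(k-1)h$, and puts $f=Nh$ on $I_N$. The values inside $I_k$ are $(\ell-2)h$ or $(\ell-1)h$ for exits at stages $\ell\ge k+1$, together with $Nh$. All of them are at least $(k-1)h=f_{I_{k-1}}$, with equality only on the negative sibling of $I_{k+1}$. Therefore
\[
 \osc_{I_{k-1}}(f)=\frac1q\bigl(f_{I_k}-f_{I_{k-1}}\bigr)+\frac hq=\frac{2h}q=1 .
\]
All other nodes are constant, so $\|f\|_{\ast,I_0}=1$. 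Moreover $E_N^{I_0}f=Nh$ on $I_N$ while $f_{I_0}=0$, so the bound $Nq/2$ is attained exactly. Truncating the chain after $\min\{m,N\}$ steps and refining constantly gives the depth-$m$ lower bound.

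The paper's construction differs when $q\ge3$. It makes all $q-1$ siblings of $I_k$ constant at $(k-1)h-h/(q-1)$, spreading the negative mass. The smallest value inside $I_k$ is then $kh-h/(q-1)$, which exceeds $f_{I_{k-1}}$ by $h(q-2)/(q-1)$, so one-sidedness holds with slack. In your version it is tight, because the $-h$ sibling one level down exactly undoes the $+h$ increment. Both give oscillation exactly $1$ at every chain node. For $q=2$ the two constructions coincide.
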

\begin{proof}
On each \(J\in\cT_N(I)\), Jensen's inequality gives
\(|f_J-f_I|\le\avg{|f-f_I|}_J\). Averaging these
inequalities over \(I\) yields $ \avg{|E_N^I f-f_I|}_I\le\osc_I(f)$. Along each path of $N$ edges,
\cref{lem:edge} bounds the change in conditional average by
$Nq\|f\|_{\ast,B}/2$. This proves \eqref{eq:cumulative-endpoints}.
The inequality
$\avg{|u|^p}\le\|u\|_\infty^{p-1}\avg{|u|}$ gives
\eqref{eq:cumulative-interpolation}. If $f$ has depth $m$, there are at most
$\min\{m,N\}$ nonzero changes on any path.

A centered depth-one function proves sharpness of the first endpoint.
For the other endpoint put $h=q/2$ and select a chain
$I_0\supset I_1\supset\cdots\supset I_N$, each node a child of its
predecessor. Prescribe $(f)_{I_k}=kh$. At stage $k$, make the other
$q-1$ children of $I_{k-1}$ constant with value
\[
 y_k=(k-1)h-\frac{h}{q-1},
\]
and put $f=Nh$ on $I_N$. The averaging relation is satisfied at every
stage, so $f_{I_0}=0$.

We check the full terminal oscillation, rather than only the changes in
conditional means. On the active child $I_k$, every terminal value is at
least $(k-1)h$: a later exit at stage $\ell\ge k+1$ has value
$(\ell-1)h-h/(q-1)\ge(k-1)h$, and the final value has the same property.
On each sibling of $I_k$, the deviation from $(k-1)h$ is $-h/(q-1)$.
Consequently the positive and negative contributions to the normalized
oscillation of $I_{k-1}$ are each $h/q=1/2$. Thus
$\osc_{I_{k-1}}(f)=1$. All remaining nodes are constant, and
$\|f\|_{\ast,I_0}=1$. At the root \(I_0\), the value of \(E_N^{I_0}f\) on
\(I_N\) is \(Nh\), whereas \(f_{I_0}=0\). Hence
 $\Gamma_{\infty,N}(I_0;f)\ge Nh=Nq/2,$
which matches the upper bound.
For the depth-\(m\) assertion, carry out the construction for
\(k=\min\{m,N\}\) generations. If \(m>k\), give every descendant
of a generation-\(k\) leaf the same value as that leaf. The
resulting depth-\(m\) function still has BMO seminorm \(1\),
and its largest change from the root average after \(N\)
generations is \(kq/2\).
\end{proof}
These envelopes measure changes in averages along tree
paths. The one-generation constants are explicit for every \(p\),
and the cumulative constants are explicit at \(p=1\) and
\(p=\infty\). We now
assign increasing weights to successive generations. This
keeps every finite-depth problem explicit while allowing its
sharp constants to change at every depth.

\subsection{Weighted differences and nonstabilizing finite constants}\label{subsec:the weighted example}
Let $0<w_1\le w_2\le\cdots$ be bounded, and write $W=\sup_rw_r$.
With $E_0^Bf=f_B$, define
\begin{equation}\label{eq:weighted-envelope}
 \mathcal W_B(f):=\sup_{r\ge1}w_r
       \|E_r^Bf-E_{r-1}^Bf\|_{\operatorname{L}^\infty(B)}, \qquad
 \mathcal W(f):=\sup_{B\in\cT}\mathcal W_B(f).
\end{equation}
This functional is homogeneous, translation invariant, and invariant under
rooted tree isomorphisms.
For a depth-\(m\) vector \(a\), the finite-depth restriction is
\[
 \mathcal W_m(a)
 =\max_{1\le r\le m}w_r
   \max_{|\sigma|=r}
   |A_\sigma(a)-A_{\sigma^P}(a)|,
\]
where \(\sigma^P\) is the parent of \(\sigma\).
Thus \(\mathcal W_m\) is continuous and piecewise linear.
It is translation invariant, so subtracting the root average
provides an anchor. If \(w_1,\ldots,w_m\) are rational, then
\(\mathcal W_m\) is rational piecewise linear, and
\cref{thm:finite-anchor} gives a rational finite-depth
extremizer and a rational sharp constant.
Monotonicity of the weights makes it hereditary:
a difference at relative level $r$ below a depth-$d$ descendant has weight
$w_{r+d}\ge w_r$ when viewed from $B$.

\begin{prop}\label{prop:weighted}
The exact finite-depth and local constants of \eqref{eq:weighted-envelope} are
\[
 c_{\mathcal W,m}=\frac q2w_m,
 \qquad c_{\mathcal W}(B)=c_{\mathcal W}=\frac q2W.
\]
Each finite-depth value has a centered extremizer. If every $w_r$ is rational,
the finite functionals are rational piecewise-linear.
\end{prop}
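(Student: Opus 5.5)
The argument has three parts: an upper bound at each depth from \cref{lem:edge}, a matching centered extremizer at depth $m$, and a passage to the global constant through \cref{thm:reduction}.

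\emph{Upper bound at depth $m$.} Let $a$ be a depth-$m$ vector with $F_{m,q}(a)\le1$. For every $r$ and every node $\sigma$ with $|\sigma|=r$, inequality \eqref{eq:edge-average-bound}, applied at the parent $\sigma^P$, gives
\[
|A_\sigma(a)-A_{\sigma^P}(a)|\le\frac q2 D_{\sigma^P}(a)\le\frac q2 .
\]
Differences at levels $r>m$ vanish because a depth-$m$ function is constant below depth $m$. So only $r\le m$ contributes, and monotonicity of the weights gives $\mathcal W_m(a)\le\frac q2 w_m$.

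The same edge estimate controls a general $f\in\BMO_\cT(B)$. For every $r$ we have $\|E_r^Bf-E_{r-1}^Bf\|_\infty\le\frac q2\|f\|_{\ast,B}$, hence $\mathcal W_B(f)\le\frac q2W\|f\|_{\ast,B}$. Taking suprema over roots gives the same bound for $\mathcal W$.

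\emph{Extremizer at depth $m$.} The natural candidate is the depth-one polarized configuration from \cref{thm:gradient}, but placed at the deepest generation so that it carries the weight $w_m$. Fix a node $\sigma$ with $|\sigma|=m-1$. On its $q$ children put the leaf values $(q/2,-q/2,0,\ldots,0)$, and put $0$ on every other leaf. Then $A_\sigma=0$ and $D_\sigma=1$. Every ancestor of $\sigma$ has average $0$, and its oscillation is $q^{-k}\cdot 1\le1$, where $k$ is its distance above $\sigma$. All other nodes are constant. Hence $F_{m,q}(a)=1$ and the root average is $0$, so the vector is centered. The level-$m$ difference at the child with value $q/2$ equals $q/2$, so $\mathcal W_m(a)\ge\frac q2w_m$. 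Together with the upper bound this gives $c_{\mathcal W,m}=\frac q2w_m$, attained by a centered vector.

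\emph{Global and local constants.} To apply \cref{thm:reduction} we need \eqref{eq:LSC}. For fixed $r$ and $m\ge r$, the averages of $E_m^Bf$ agree with those of $f$ on all nodes of depth at most $r$. Therefore
\[
\mathcal W_B(E_m^Bf)=\max_{r\le m}w_r\|E_r^Bf-E_{r-1}^Bf\|_\infty,
\]
which increases to $\mathcal W_B(f)$ as $m\to\infty$. The other hypotheses of \cref{def:scN} were noted before the statement. \cref{thm:reduction} then gives
\[
c_{\mathcal W}=c_{\mathcal W}(B)=\sup_m\frac q2w_m=\frac q2W .
\]
Rational piecewise linearity for rational weights was already observed before the statement, so that claim needs no further work.

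The only step that needs care is the oscillation bookkeeping in the extremizer: the polarization sits deep in the tree, and one must check that it does not push any ancestor's oscillation above $1$. The geometric dilution factor $q^{-k}$ handles this directly.
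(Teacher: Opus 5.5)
Your proposal is correct and follows essentially the same route as the paper. The upper bound comes from the edge estimate \eqref{eq:edge-average-bound} together with monotone weights. The centered extremizer is the polarized configuration $(q/2,-q/2,0,\ldots,0)$ placed below a node of depth $m-1$. The passage to the local and global constants uses \cref{thm:reduction}, via the observation that $E_m^B f$ reproduces the first $m$ differences of $f$. Your explicit computation that the ancestors of $\sigma$ have oscillation $q^{-k}$ simply fills in a step the paper states without detail.
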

\begin{proof}
The edge estimate gives
$\mathcal W_B(f)\le(q/2)W\|f\|_{\ast,B}$. For a depth-$m$ function, all
levels after $m$ vanish, so the upper bound is $(q/2)w_m$.
To attain it, choose a node at depth $m-1$, put the values
$(q/2,-q/2,0,\ldots,0)$ on its children, and set $f=0$ outside that node.
Its mean is zero on every ancestor, its BMO seminorm is $1$, and its only
nonzero martingale difference is at level $m$, with maximum $q/2$.
Thus $c_{\mathcal W,m}=(q/2)w_m$.

For arbitrary $f$, the first $m$ differences of $E_m^Bf$ equal those of $f$,
and all later differences vanish. Hence
$\mathcal W_B(E_m^Bf)\longrightarrow\mathcal W_B(f)$.
The reduction theorem now identifies the local and global constants with
$\sup_m(q/2)w_m$. The finite formula is a maximum of absolute values of
linear forms with rational coefficients whenever $w_1,\ldots,w_m$ are
rational.
\end{proof}
For $w_m=1-2^{-m}$, every finite-depth constant is rational and
$c_{\mathcal W,m}<q/2=c_{\mathcal W}$ for every $m$. For
 $w_m=2^{-m}\lfloor2^m\sqrt2\rfloor$,
the weights are positive rational numbers, are nondecreasing, and converge
to $\sqrt2$. Thus every finite-depth constant is rational, whereas
$c_{\mathcal W}=q\sqrt2/2$ is irrational.

These examples concern the finite-depth constants at a
fixed root \(B\). For the global envelope, choose a centered
one-generation extremizer on a node \(I\) and extend it by
zero outside \(I\). Viewed from ancestors for which \(I\)
lies successively deeper, its nonzero difference receives
the weights \(w_1,w_2,\ldots\). The global envelope of this
single function is therefore \(qW/2\). Thus failure of
finite-depth stabilization at a fixed root does not prevent
attainment of the global sharp constant on the bi-infinite tree.

\section{Further questions}\label{sec:questions}
Our formalism identifies the local and global sharp
constants with the supremum of the finite-depth constants. It does not determine whether a
particular sequence of finite-depth constants stabilizes. By \cref{thm:stable-centering}, the centered and uncentered
finite-depth problems have the same supremum when taken over
all depths. For absolute value,
the exact value of $c(q)$ remains the principal question. The bounds
proved here are
\[
 \frac{25}{24}\le c(2)\le\frac{18}{11},\qquad
 \Delta_q+\frac{q-2}{q^3}\le c(q)\le
       \frac{2(q+1)^2}{q^2+3q+1}\quad(q\ge3).
\]

There are several concrete remaining problems.
\begin{enumerate}
\item Determine \(c(q)\), beginning with \(q=2\).
In particular, find a sharp upper bound that accounts for
how the distributions on a node and its descendants constrain
one another. Applying the scalar estimate of
\cref{lem:folding-tail} independently at each node does not
capture these multigeneration constraints.

\item Find the least depth at which absolute value increases
the BMO seminorm on a binary tree, and determine further
exact finite-depth constants. We know
\(c_1(2)=c_2(2)=1\) and \(c_4(2)\ge41/40\), but have not
determined \(c_3(2)\) or \(c_4(2)\). The reduction to centered cases in
\Cref{prop:absolute-structure} and the family of finite linear programs \eqref{eq:abs-LP} provide a precise finite certification problem. The linear programs in \eqref{eq:abs-LP} compute the
uncentered constants. Analogous programs compute the
centered constants by imposing zero root average and
allowing the output oscillation to be evaluated at any node.

\item Decide whether \(c_m(q)=c(q)\) for some finite \(m\).
If equality holds, rationality and attainment at depth \(m\) (\cref{prop:absolute-structure})
give a rational sharp constant and a finite-depth
extremizer. Stable centering (\cref{thm:stable-centering}) then gives a centered
extremizer at a possibly greater finite depth. Whether rationality of \(c(q)\) would imply stabilization
for the absolute-value family is unresolved. For general
functionals, rationality alone is insufficient: the
weighted example in \Cref{subsec:the weighted example} with \(w_m=1-2^{-m}\) has rational global
constant and no finite-depth stabilization.
\item Determine optimal centering bounds for non-homogenueous trees in which every node has at least two children and each child has relative measure at least \(\alpha>0\). The construction of \cref{thm:stable-centering} gives a
finite depth bound depending on \(\alpha\) and the initial root mean; the question is the best possible dependence.
Norm-preserving extension (\cref{lem:extension}) and finite-depth approximation (\cref{lem:expectation})
continue to apply under the same differentiation hypothesis.
The question is how many ancestor generations are needed
to center an admissible configuration while keeping its BMO
seminorm at most \(1\). If finite-depth constants are to be
independent of the root, the tree isomorphisms used in
\cref{def:scN} must preserve relative child measures.
Rational child proportions would retain rational linear
constraints for the finite \(\operatorname{L}^1\) problems.
\end{enumerate}
The examples reveal different extremal behaviour within the same
tree geometry. An optimal translation may be determined by a node
average or by a leaf value, while finite-depth constants can behave
differently as the depth grows. The formalism treats these features
separately and provides a setting for studying other functionals
arising in the analysis of tree BMO.

\end{document}